\def\C{\mathbb{C}}
\def\R{\mathbb{R}}
\def\H{\mathbf{H}}
\def\T{\mathcal{T}}
\def\D{\mathbb{D}}
\def\SS{\mathbf{S}}
\def\B{\mathbb{B}}
\newtheorem{theorem}{Theorem}[section]
\newtheorem{lemma}[theorem]{Lemma}
\newtheorem{proposition}[theorem]{Proposition}
\newtheorem{corollary}[theorem]{Corollary}
\newtheorem{definition}[theorem]{Definition}
\newtheorem{remark}[theorem]{Remark}
\newtheorem*{theorem*}{Theorem}
\begin{document}

\title[Circle domain]{The Koebe conjecture and the Weyl problem for convex surfaces in hyperbolic 3-space}
\author{Feng Luo}

\address{Department of Mathematics, Rutgers University, Piscataway, NJ, 08854}

\email{fluo@math.rutgers.edu}

\author{Tianqi Wu}

\address{Center of Mathematical Sciences and Applications, Harvard University, Cambridge, MA 02138
}

\email{mike890505@gmail.com}         

\subjclass{30C35, 53A55, 30C20, 30C62}

\keywords{circle domains, convex hull, convex surfaces, hyperbolic and conformal geometries}
\begin{abstract}
We prove that the Koebe circle domain conjecture is equivalent to the Weyl type problem  that every complete hyperbolic surface of genus zero is isometric to the boundary of the hyperbolic convex hull of the complement of a circle domain in the hyperbolic 3-space.
Applications of the result to discrete conformal geometry will be discussed. 
The main tool we use is Schramm's transboundary extremal lengths.

\end{abstract}
\maketitle
\tableofcontents
\newtheorem*{equicont}{Theorem \ref{equicont}}


\section{Introduction}
\subsection{The main result}


A \it circle domain \rm is an open connected set in the Riemann sphere $\hat{\C}$ whose boundary components are either circles or points.
In 1908,  P. Koebe  \cite{k1} made the circle domain conjecture that any domain (i.e., open connected set) in the plane is conformally diffeomorphic to a circle domain. 
The classical Weyl problem concerns isometric embeddings of positively curved 2-spheres into  3-space. The Weyl problem in the hyperbolic space asks for isometric embeddings of genus-zero surfaces with complete metrics of curvature at least $-1$ into hyperbolic 3-space $\H^3$. This paper shows that the Koebe conjecture is equivalent to a special form of the Weyl problem in $\H^3$.


 Let  $\SS^2$ be the unit 2-sphere, $\hat\C$ be identified with $\SS^2$ by the stereographic projection, ($\H^3, d^P)$ or $\H_P^3$  be the Poincar\'e ball model of the hyperbolic 3-space whose boundary is $\SS^2$, and $C_P(Y)$ be the convex hull of a closed set $Y$ in $\H^3 \cup \SS^2$ in  hyperbolic 3-space.  A \it circle type \em closed set $Y \subset \SS^2$ or $Y\subset\hat \C$ is a compact set whose complement is a circle domain, i.e., each connected component of $Y$ is either a round disk or a point.
It is well known by the work of W. Thurston (see page 185 in \cite{th} or Theorem 1.12.1 in \cite{em}) that if $Y$ is a closed set in $\SS^2$ containing more than two points, then the boundary of the convex hull $\partial C_P(Y) \subset \H^3$, in the intrinsic path metric, is a genus-zero complete hyperbolic surface. The special form of the Weyl problem, which is an inverse version of Thurston's theorem,   is:


\medskip
\noindent
{\bf Conjecture 1.} (\cite{lsw}, see also \cite{luo8}) \it
Every genus zero complete hyperbolic surface is isometric to $\partial C_P(Y)$ for a circle type closed set $Y$ in $\SS^2$.   \rm
\medskip

We remark that in the case that the convex hull $C_P(Y)$ is 2-dimensional, as a convention in this paper,   we use $\partial C_P(Y)$ to denote the metric double of $C_P(Y)$ across its boundary, i.e., $\partial C_P(Y) := C_P(Y) \cup_{ id_{\partial}} C_P(Y)$.


Since the circle domain conjecture can be proved easily for domains whose complements in $\hat{\C}$ contain at most two points, we will consider in the rest of the paper only those domains $U$ whose boundaries $\partial U$  contain more than two points. For each such domain $U$, by the uniformization theorem,  there exists a unique complete conformal hyperbolic metric $d^U$ of the form $\lambda(z) |dz|$ on $U$.  The metric $d^U$ will be called the Poincar\'e metric of the domain $U$. Using Koebe's theorem that any genus-zero Riemann surface is conformally diffeomorphic to a domain in the Riemann sphere, we see that every genus-zero complete hyperbolic surface is isometric to a Poincar\'e metric  $(U, d^U)$ where $U \subset \mathbf S^2$ with $|\partial U| \geq 3$. In particular, the circle domain conjecture is equivalent to the statement that every genus-zero complete hyperbolic surface is isometric to $(U, d^U)$ for some circle domain $U$.

The main result of the paper shows that the circle domain conjecture of Koebe is equivalent to Conjecture 1.
More, precisely,  we prove:


\begin{theorem}\label{1.222} (a) For any circle domain $U \subset \hat \C$ whose boundary contains at least three points, there exists a circle type closed set $Y \subset \mathbf S^2$ such that the Poincar\'e metric  $(U, d^U)$ is isometric to $\partial C_P(Y)$.

(b) For any circle type closed set $Y \subset \mathbf S^2$ with $|Y| \geq 3$, there exists a circle domain $U \subset \hat \C$ such that $\partial C_P(Y)$ is isometric to the Poincar\'e metric $(U, d^U)$.
\end{theorem}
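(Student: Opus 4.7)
My plan is to reduce both parts of the theorem to a single assertion: \emph{whenever $U\subset \hat{\C}$ is a circle domain with $|\partial U|\geq 3$ and $Y:=\hat{\C}\setminus U$ is its complementary circle type closed set, the Poincar\'e surface $(U,d^U)$ is isometric to $\partial C_H(Y)$}. Part~(a) is this assertion verbatim, with the choice $Y=\hat{\C}\setminus U$. For part~(b), one sets $U:=\hat{\C}\setminus Y$; since a disjoint union of closed disks and isolated points never disconnects $\SS^2$, $U$ is a connected circle domain, and $|\partial U|\geq |Y|\geq 3$, so the unified assertion applies.

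First I would recall Thurston's structural picture of the dome. Each round-disk component $\bar D_i$ of $Y$ determines a hyperbolic plane $P_i\subset \H^3$ with $\partial_\infty P_i = \partial D_i$; each isolated point component of $Y$ becomes an ideal cusp; and $\partial C_H(Y)$ is assembled from convex polygonal pieces $F_i\subset P_i$ glued along a geodesic pleating lamination. In its intrinsic path metric $\partial C_H(Y)$ is a complete genus-zero hyperbolic surface. The nearest-point retraction $\phi: U\to \partial C_H(Y)$, equivalently the first horoball-contact map from the sphere at infinity, is a homeomorphism by standard convex-hull geometry in $\H^3$.

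The heart of the argument is showing that $\phi$ is \emph{conformal} with respect to the intrinsic conformal structure on $\partial C_H(Y)$. Here Schramm's transboundary extremal length is decisive. For any curve family $\Gamma$ in $U$, the transboundary extremal length $\mathrm{TEL}_U(\Gamma)$ is defined by minimizing the total mass of a Borel density on $U$ together with nonnegative weights on the boundary components. On $\partial C_H(Y)$ one has the ordinary extremal length $\mathrm{EL}_{\partial C_H(Y)}(\phi(\Gamma))$. Using that each face $F_i\subset P_i$ is isometric to a subset of the complementary disk $D_i^c$ equipped with its Poincar\'e metric (the classical Poincar\'e disk model of $P_i$), one transfers densities between the two surfaces to establish the identity $\mathrm{TEL}_U(\Gamma)=\mathrm{EL}_{\partial C_H(Y)}(\phi(\Gamma))$ for every $\Gamma$. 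Schramm's rigidity in the transboundary framework then forces $\phi$ to be conformal. Finally, because $(U,d^U)$ and $\partial C_H(Y)$ are both complete hyperbolic surfaces of genus zero, uniqueness of the complete hyperbolic metric in a conformal class upgrades $\phi$ to an isometry.

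The step I expect to be the main obstacle is establishing the extremal-length identity $\mathrm{TEL}_U(\Gamma) = \mathrm{EL}_{\partial C_H(Y)}(\phi(\Gamma))$ in the regime where $Y$ has uncountably many components, for example when $\partial U$ contains a Cantor-type family of boundary circles. There one must control the interplay between non-$\sigma$-finite weight distributions on the boundary and the fine geometry of the pleating lamination on $\partial C_H(Y)$ without losing measurability or compactness of extremal densities. Schramm's transboundary framework was engineered precisely for this regime, and this is where I anticipate the paper's central technical work to lie.
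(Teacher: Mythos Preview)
Your ``unified assertion''---that $(U,d^U)$ is isometric to $\partial C_H(\hat\C\setminus U)$ for the \emph{same} $U$---is false, so the whole reduction collapses. For a round annulus one can compute both sides. Take $Y$ to be two antipodal spherical caps of angular radius $\alpha$ on $\SS^2$ and $U=\SS^2\setminus Y$. In the Klein model $C_K(Y)$ is the Euclidean convex hull of the two caps, and $\partial C_K(Y)\cap\B^3$ is the cylinder $\{x^2+y^2=\sin^2\alpha,\ |z|<\cos\alpha\}$; restricting $d^K$ and using the reflection $z\mapsto -z$ one finds the core geodesic is the circle $z=0$ with length $\ell_2=2\pi\tan\alpha$. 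On the other hand stereographic projection shows $U$ is conformal to $\{\tan(\alpha/2)<|w|<\cot(\alpha/2)\}$, so $(U,d^U)$ has core length $\ell_1=\pi^2/\log\cot(\alpha/2)$. At $\alpha=\pi/4$ these are $\ell_2=2\pi$ and $\ell_1=\pi^2/\log(1+\sqrt2)\approx 11.2$, so the two hyperbolic annuli are not isometric. Consequently the nearest-point retraction $\phi$ is not conformal, your identity $\mathrm{TEL}_U(\Gamma)=\mathrm{EL}_{\partial C_H(Y)}(\phi(\Gamma))$ cannot hold for all $\Gamma$, and the step ``each face $F_i\subset P_i$ is isometric under $\phi$ to a subset of $D_i^c$ with its Poincar\'e metric'' is wrong. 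The simply connected case misled you: there both surfaces are copies of $\H^2$ only because there are no moduli. The theorem asserts merely the existence of \emph{some} circle type $Y$, and that $Y$ is genuinely different from $\hat\C\setminus U$.

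The paper's route is entirely different. For part~(a) one approximates $U$ by circle domains $U_n$ with finitely many boundary components and invokes Schlenker's Weyl-type realization theorem---a nontrivial existence result, not ``take the complement''---to obtain circle type sets $Y^{(n)}$ with $(U_n,d^{U_n})$ isometric to $\partial C_K(Y^{(n)})$; one then passes to a Hausdorff limit $Y$. The technical core is proving that the isometries $\phi_n$ form an \emph{equicontinuous} family, so that a subsequence converges to an isometry $(U,d^U)\to\partial C_K(Y)$ and each component of $Y$ is a Hausdorff limit of components of the $Y^{(n)}$ (hence $Y$ stays of circle type). Schramm's transboundary extremal length enters precisely here, through a duality theorem on annuli and the uniform bound that any hyperbolic convex surface has Euclidean area at most $16\pi$; it is not used to compare $U$ with its own dome. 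Part~(b) is the mirror argument: approximate $Y$ by $Y^{(n)}$ with finitely many components, apply Koebe's theorem in the finitely connected case to get circle domains $U_n$, and again prove equicontinuity to pass to the limit.
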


Using  He-Schramm's theorem  \cite{hes} that the Koebe conjecture holds for domains with countably many boundary components, we obtain:

\begin{corollary}\label{1.22} Every genus zero complete hyperbolic surface with countably many topological ends is isometric to  $\partial C_P(Y)$ for a circle type closed set $Y$ in $\SS^2$.
\end{corollary}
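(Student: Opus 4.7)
The plan is to reduce Corollary~\ref{1.22} to three prior inputs: Koebe's uniformization theorem for Riemann surfaces, He--Schramm's theorem for domains with countably many boundary components, and part (a) of Theorem~\ref{1.222}. Let $(S, d_S)$ be a genus-zero complete hyperbolic surface with countably many topological ends. My first step would be to represent $S$ as the Poincar\'e metric on a planar domain: by Koebe's uniformization theorem there is a conformal diffeomorphism from $S$ onto a domain $V \subset \hat\C$, and since $d^V$ is the unique complete hyperbolic metric in its conformal class, this identification carries $d_S$ onto $d^V$. For a planar domain the topological ends are in bijection with the connected components of the complement in $\hat\C$, so $\hat\C \setminus V$ has only countably many components. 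Completeness and constant curvature $-1$ rule out $V \in \{\hat\C, \C, \C\setminus\{0\}\}$, forcing $|\partial V| \geq 3$.

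Next I would invoke He--Schramm's theorem on $V$ to obtain a conformal diffeomorphism $\phi : V \to U$ onto a circle domain $U \subset \hat\C$. Conformal invariance of the Poincar\'e metric upgrades $\phi$ to an isometry $(V, d^V) \to (U, d^U)$, and $|\partial U| \geq 3$ since $\phi$ induces a bijection on boundary components. Finally I would feed $U$ into part (a) of Theorem~\ref{1.222} to obtain a circle type closed set $Y \subset \mathbb S^2$ with $(U, d^U)$ isometric to $\partial C_H(Y)$. Composing the three isometries $(S, d_S) \cong (V, d^V) \cong (U, d^U) \cong \partial C_H(Y)$ yields the corollary.

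The only delicate point is the bijection between topological ends of $S$ and connected components of $\hat\C \setminus V$, which is what lets me invoke He--Schramm in the countable case; since $V$ is a planar domain this is a standard fact about ends of surfaces. Otherwise the argument is a straightforward assembly of three black-box results, and I do not expect any serious obstacle beyond verifying that hypothesis.
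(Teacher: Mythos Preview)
Your proposal is correct and follows essentially the same route as the paper: represent the surface as a planar domain via Koebe's theorem (as the paper does in the paragraph preceding Theorem~\ref{1.222}), apply He--Schramm to pass to a circle domain, and then invoke Theorem~\ref{1.222}(a). The paper states the corollary as an immediate consequence of He--Schramm's theorem without spelling out the details, so your argument is simply a fuller version of the intended proof.
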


The relationship between the Koebe conjecture and Conjecture 1 was discovered during our investigation of the discrete uniformization conjecture for polyhedral surfaces. Indeed, Conjecture 1 can be considered a generalized version of the existence part of the discrete uniformization conjecture.
The above corollary implies that every non-compact simply connected polyhedral surface is discrete conformal to the complex plane $\C$ or the unit disk $\mathbb D$. For more details, see \S10 and also \cite{glsw}, \cite{gglsw},  \cite{lsw} and \cite{luo8}.


The main tool we use to show Theorem \ref{1.222} is Schramm's transboundary extremal lengths.

\subsection{History and a generalized Weyl problem in $\H^3$}
The Koebe conjecture is known to be true for connected open sets
$U \subset \hat{\C}$
which have finitely many boundary components (\cite{k1}).
The best work done to date is by He-Schramm \cite{hes}, where they proved the conjecture for $U$ having countably many boundary components.
Conjecture 1 is known to be true for finite area hyperbolic surfaces and hyperbolic surfaces of finite topological types whose ends are funnels by the works of Rivin \cite{rivin2} and Schlenker \cite{sch1}, respectively. F. Fillastre \cite{fillastre} proved that Conjecture 1 holds for many symmetric domains with countably many boundary components.

Conjecture 1 is a Weyl-type problem for convex surfaces.  It is well known that a smooth convex surface in the Euclidean space (respectively the hyperbolic space) has Gaussian curvature at least zero (respectively $-1$).  Weyl's problem asks the converse. Namely, whether any Riemannian metric of positive curvature on the 2-sphere is isometric to the boundary of a convex body in  3-space.  The problem was solved affirmatively by Levy, Nirenberg and Alexandrov. 
 The natural generalization of Weyl's problem to hyperbolic 3-space states that every complete Riemannian metric of curvature at least $-1$ on a genus zero surface is isometric to the boundary of a closed convex set in hyperbolic 3-space.  This was established by Alexandrov in \cite{al}.  

Take a simply connected domain
$U \subset \C$ with $U \neq \C$ and let $Y =\hat{\C}-U$ be its complement. Then, in the upper-half space model of the hyperbolic 3-space, the boundary surface $\partial C_P(Y)$ is
homeomorphic to $U$ and hence is simply connected (see Corollary \ref{3212}). 
Thurston's theorem says that there exists an isometry $\Phi$ from $\partial C_P(Y)$ to $\partial C_P(\mathbb D^c) \cong \H^2$, where $\mathbb D$ is the unit disk. On the other hand, the Riemann mapping theorem says that there exists a conformal diffeomorphism $\phi$ from $U$ to $\mathbb D$. Thus Thurston's isometry $\Phi$ can be considered as a geometric realization of the Riemann mapping $\phi$.
The Koebe conjecture and Conjecture 1 are the corresponding Riemann mapping-Thurston's isometry picture for non-simply connected domains.


The uniqueness aspect of Conjecture 1 is the following statement.

\medskip
\noindent {\bf Conjecture 2. (\cite{lsw})} \it Suppose $X$ and $Y$ are two circle type closed sets in $\mathbf S^2$ such that $\partial C_P(X)$ is isometric to $\partial C_P(Y)$. Then $X$ and $Y$ differ by a M\"obius transformation.\rm
\medskip

Though it is known that the uniqueness part of the Koebe circle domain conjecture is false, it is possible that Conjecture 2 may still be true in view of Pogorelov's rigidity theorem \cite{pogo} (see also Theorem 1 in \cite{bori}). Since the uniqueness part of the Koebe conjecture holds for domains with countably many ends by \cite{hes}, we believe Conjecture 2 holds for $X$ with countably many connected components.

The strongest version of the Weyl problem in hyperbolic 3-space is the following.

\medskip
\noindent {\bf Conjecture 3.} \it Suppose $(S, d)$ is a planar surface with a complete path metric whose curvature is at least $-1$.  Then there exists a complete convex surface $Y$ in $\H^3$ isometric to $(S, d)$ such that each end of $Y$ is either a circle or a point in the sphere at infinity of $\H^3$. Furthermore, the convex surface $Y$ is unique up to isometry of $\H^3$.\rm
\medskip

\subsection{Organization of the paper and acknowledgement}

The paper is organized as follows. Preliminaries and an outline of the proof of the main theorem are in \S2. In \S3, we give a proof of the main theorem for a special case which has to be dealt with separately.
In \S4, we prove an area estimate theorem for convex surfaces in  hyperbolic 3-space and a few results on the shortest distance projection maps. In \S5, we establish some results relating to Hausdorff convergence and the convergence of the Poincar\'e metrics. In \S6, we recall Schramm's transboundary extremal length and establish a duality theorem. Part (a) of Theorem \ref{1.222} is proved in \S7.  Part (b) of Theorem \ref{1.222} is proved in \S8, assuming the key equicontinuity result, which is proved in \S9.  In \S10, we briefly discuss the relationship between the Weyl problem, the discrete conformal geometry of polyhedral surfaces, and the discrete uniformization problem.  In the Appendix, we recall the work of  Reshetnyak \cite{res} on the complex structure of non-smooth convex surfaces, which is used in the paper.

We thank Michael Freedman and Francis Bonahon for stimulating discussions.  Part of the work was carried out while the first author was visiting CMSA at Harvard. We thank S.T. Yau for the invitation.  We greatly appreciate the referee's meticulous reading of the paper
and his/her detailed comments and suggestions which helped us to improve considerably the manuscript.
The work is supported in part by NSF 1760527, NSF 1737876, NSF 1811878, NSF 1405106, NSF 1760471 and NSF 2220271.

\section{Notations, Preliminaries,  and Outline of the proof of Theorem \ref{1.222}}
In this section, we recall some of the basic facts on convex surfaces, surfaces of bounded curvature, and their conformal structures. We will outline the main steps in the proof of 
 Theorem \ref{1.222}. 

 The strategy of proving Theorem \ref{1.222} is to approximate an arbitrary circle domain by circle domains of finite topology.
  The key result that enables us to show that the limiting domain is still a circle domain is the equicontinuity of the family of approximation conformal maps, i.e., Theorems \ref{equicont} and \ref{equicont1}.  To prove the equicontinuity, we use Schramm's transboundary extremal length \cite{schramm} and the duality Theorem \ref{4373} of transboundary extremal lengths on annuli.  The main result for estimating extremal lengths is Theorem \ref{area} which states that the Euclidean area of any convex hyperbolic surface in the Poincar\'e model of the hyperbolic space $\H^3_P$ is at most $16\pi$.

\medskip
\subsection{Notations and Preliminaries}
We use $\mathbf S^2$, $\C$,  $\D$, $\H^3_P=(\H^3, d^P)$ and $\H^3_K=(\H^3, d^K)$ to denote the standard 2-sphere, the complex plane, the open unit disk, the Poincar\'e model of the hyperbolic 3-space, and the Klein model of the hyperbolic 3-space respectively.
We use $\H^2$ to denote the Poincar\'e disk model and consider $\H^2 \subset \H^3$ as a totally geodesic plane. The closure of a set $X \subset \R^3$ is denoted by $\overline{X}$.
The boundary of a 3-dimensional convex set $X$ in $\H^3$ or $\R^3$ is denoted by $\partial X$. 
The spherical metric on $\mathbf S^2$ or $\hat \C$ is denoted by $d^{\mathbf S}$,  and the Euclidean metric on $\mathbb R^3$ is denoted by $d^E$.  
The Poincar\'e metric on a domain $U \subset \hat \C$ with $|\partial U| \geq 3$ is denoted by $d^U$. The interior of a manifold or convex set $M$ is denoted by $int(M)$. The open ball of radius $r$ centered at $p$ in a metric space $(Z,d)$ is $B_r(p, d)$, and the diameter of $X \subset Z$ is $diam_d(X)$.  

 A metric space $(X,d)$ is called a \it path metric space \rm if the distance between two points $p$ and $q$ is the infimum of the lengths of paths between $p$ and $q$. We call $d$ a path metric. For instance, each Riemannian manifold is a path metric space.  Suppose $(X, d)$ is a path connected metric space such that every pair of points in $X$ can be joined by a rectifiable path, we can define the induced path metric $d^*$ on $X$ to be the infimum of the lengths of paths between two points, i.e., $d^*(p,q) =\inf\{l_d(\gamma):  \text{  $\gamma$ is a rectifiable path from $p$ to $q$}\}$. If $X$ is a rectifiably path connected subset of a metric space $(Z,d^Z)$, we use $d_X^Z$ to denote the induced path metric on $X$. We remark that, in general, the induced path metrics $d_X^Z$ and $d^*$ are only pseudo metrics. However, in the cases we encounter in this paper, the induced path metrics are true metrics. See \cite{burago} chapter 2 for more details on path metrics.

We will  use  $d=\sum_{i=1}^n g_{ij} dx_idx_j$ and sometimes $a(z)|dz|$ for a Riemannian metric on a manifold $M$. The Riemannian distance $d_M$ on $M$ is defined by the infimum of the lengths of smooth paths between two points. It is known (Proposition 2.4.1, \cite{burago}) that if $X$ is a connected smooth submanifold of a Riemannian manifold $(M, d)$ with associated Riemannian distance $d_M$, then the induced path metric on $X$ from $d_M$ is the same as the Riemannian distance metric on $X$ where the Riemannian metric on $X$ is obtained by restricting the Riemannian metric $d$ to the tangent spaces of $X$.   To avoid excessive use of notations and if no confusion arises, we will use $d$ to denote both the Riemannian metric on $M$ and the associated Riemannian distance $d_M$. 

\medskip
\subsection{Basic facts about convex surfaces and shortest distance projections}
In this subsection, we collect some of the known results on convex sets and surfaces.
For a convex set $X$ in $\R^3$ or $\H^3$, the dimension of $X$ is the dimension of the smallest totally geodesic submanifold $P$ which contains $X$. In particular, a convex set $X$ has a non-empty interior in the submanifold $P$.  We will mainly deal with 3-dimensional convex sets. A convex surface $S$ is a topological surface such that $S =\partial X$ for some 3-dimensional convex set in $\R^3$ or $\H^3$. Note that we do not require $X$ to be closed. 

The topology of convex sets and convex surfaces in hyperbolic 3-space can be understood using the Klein model. Recall that geodesics and totally geodesic planes in the Klein model $\H^3_K$ are exactly the intersections of Euclidean lines and Euclidean planes with the open unit ball $\B^3$. Therefore, convex surfaces, and convex sets in $\H_K^3$ are the same (as point sets) as convex surfaces, and convex sets in the open unit ball $\B^3$ in Euclidean geometry. This shows that all topological properties of hyperbolic convex sets and convex surfaces are the same as those of Euclidean convex sets and convex surfaces in $\B^3$. 

The fundamental topological properties of a compact 3-dimensional convex set $X$ in $\R^3$ are that  $X$ is homeomorphic to the closed unit 3-ball and its boundary $\partial X$ is homeomorphic to the unit sphere $\SS^2$.  Indeed, take a point $p$ in the interior of $X$. 
Then the restriction of the radial projection map $(x-p)/|x-p|$ (from the point $p$) to the boundary of $X$ is a homeomorphism $\psi$ from $\partial X$ to $\SS^2$. It follows that the map $F(x) =|x|\psi^{-1}(x/|x|)$ from the closed unit ball $\{x \in \R^3| |x|\leq 1\}$ to $X$  is a homeomorphism. Therefore, each convex surface in $\R^3$ or $\H^3$ is homeomorphic to an open subset of $\SS^2$ and hence has genus zero. Furthermore, if $W$ is a compact subset of $\SS^2$ which does not lie in a Euclidean plane, then the hyperbolic convex hull (union with $W$) $C_K(W) \cup W$ is a topological closed ball since,  as point set,  $C_K(W)\cup W$ is the Euclidean convex hull of $W$.

The radial projection map constructed above implies the following stronger result. 
\begin{lemma} \label{radialprj} Suppose $X$ and $Y$ are n-dimensional compact convex sets such that $X \subset Y$. Then the radial projection from an interior point of $X$ induces a homeomorphism $h$ from $\partial X$ to $\partial Y$. Furthermore,  $h$ is the identity map when restricted to $\partial X \cap \partial Y$ and sends $\partial X -\partial Y$ homeomorphically to $\partial Y -\partial X$. 
\end{lemma}

\begin{corollary}(Theorem II.1.4.3 (5) in \cite{em}\label{3212}) If $A$ is a compact subset of the unit sphere $\SS^2$ such that its convex hull $X$ in $\R^3$ is 3-dimensional,  then $\partial X -A$ is homeomorphic to $\SS^2-A$ by  the radial projection from an interior point of $X$. 
\end{corollary}


One of the important tools in convex geometry is the shortest distance projection (see page 9 of \cite{schneider}).  The shortest distance projection map $\pi$ from a Euclidean space to a closed convex subset  $X$ sends each point $p \in \mathbf R^n$ to the unique point $\pi(p) \in X$ which is the point in $X$ closest to $p$, i.e., $|p -\pi(p)| =\min\{ |p-x|| x \in X\}$.  Geometrically, the projection point $\pi(p)$  is the intersection of $X$ with the largest closed ball centered at $p$ whose interior is disjoint from $X$. 

\begin{lemma} (pages 9-10 in \cite{schneider}, or page 201 in \cite{bertsekas}) Suppose $X$ is a closed convex subset of $\R^n$ and 
 $\pi: \mathbf R^n \to X$ is the shortest distance projection. Then the following statements hold.

(a) $\pi$ is distance decreasing, i.e., $ |\pi(p)-\pi(q)| \leq |p-q|$.

(b) If $p \notin X$ and $q \in X$, then the angle $\angle p \pi(p) q$ at $\pi(p)$ is at least $\pi/2$.

(c) If $X$ is an n-dimensional compact convex set and $Y$ is a compact convex set containing $X$, then the restriction map $\pi|_{\partial Y}: \partial Y \to \partial X$ is onto.  
\end{lemma} 

Similar properties for the shortest distance projection in hyperbolic spaces hold. These will be discussed in detail in \S4.2.
\medskip

\subsection{Basic facts about area and conformal structures on surfaces of bounded curvature}
This paper deals with the geometry of the boundary of the convex hulls, i.e., convex surfaces,  in hyperbolic spaces. The study is complicated by the fact that most of the convex surfaces we use are not smooth.
However, these surfaces have been extensively investigated in Alexandrov geometry.  A convex surface $S$ in $\H^3_P$ or $\H^3_K$ carries two natural structures: an induced path metric and a conformal structure.  The metric structure on $S$ is the induced path metric $d_S^P$ (or $d^K_S)$ on $S$ derived from the hyperbolic metric $d^P$ (or $d^K$). Unlike the restriction metric $d^P|_S$ which is extrinsic, the path metric $d^P_S$ defines the intrinsic geometry of $S$.  One of the basic properties of the path metric $d^P_S$ is that it defines the same topology as $d^P$ does.  In fact, a stronger result holds. Namely, the restriction metric $d^P|_S$ is locally bi-Lipschitz with respect to the path metric $d^P_S$ (see Lemma II 1.5.7 in \cite{em}).  We will also consider the induced path metric $d^E_S$ on $S$ from the Euclidean metric $d^E$.  This new metric $d^E_S$ has not been studied extensively in the literature. 
The conformal structure on $S$ comes from the induced path metric $d^P_S$. 
In general, a surface with a path metric is not known to define a complex structure. The work of Reshtnyak \cite{res}, \cite{troyanov} shows that if a surface with a path metric $(S, d)$ is of bounded curvature, then the path metric $d$ defines a complex structure.  There are several equivalent definitions of surfaces of bounded curvature.  See, for instance, page 12 of \cite{troyanov} or page 6 of \cite{alexandrov2}. Basically, each surface of bounded curvature can be approximated locally uniformly by a sequence of polyhedral surfaces with bounded curvature. Equivalently (see Theorems 2.4 and 2.6 in \cite{troyanov}), a path metric surface $(S, d)$ is of bounded curvature if and only if there exists a sequence of Riemannian surfaces $(S, d_i)$ such that (1) $d_i$ converges to $d$ uniformly on compact subsets and (2) for any compact subsurface $X$ of $S$, the integrals of the absolute values of the Gaussian curvature of $d_i$ over $X$ are uniformly bounded. For instance all convex surfaces in $\R^3$ and $\H^3$ are of bounded curvature
(Theorem 2.7 in \cite{troyanov}). Another fact that we use is that if $(S, d^P_S)$ is a convex surface in $\H^3_P$, then $(S,d^E_S)$ is a surface of bounded curvature. This can be seen as follows. Take a sequence of smooth convex surfaces $(S_i, d^P_{S_i})$ in $\H^3_P$ approximating $(S, d^P_S)$ uniformly on compact subsets. Then $(S_i, d^E_{S_i})$ converges to $(S, d^E_{S})$ uniformly on compact subsets such that the integrals of the absolute values of the Gaussian curvature of $(S_i, d^E_{S_i})$ over compact sets are bounded. It is known that the 2-dimensional Hausdorff measure on a bounded curvature surface $(S, d)$ is equal to the area element which was constructed synthetically using geodesic triangles on $(S, d)$ by Alexandrov (page 262  \cite{alexandrov2} and proposition 1.3 of \cite{SAAJID}). The work of  Reshetynak puts surfaces of bounded curvature in the setting of Riemannian metrics by relaxing the regularity condition on Riemannian metrics. One of the main results of  Reshetynak  (Theorem 7.1.2 in \cite{res}, or Theorem 2.23 in \cite{troyanov}) says that if $(S, d)$ is a surface of bounded curvature, then at each point one can find a local coordinate chart $(U, z)$ and
a function $\lambda(z)$ which is the difference of two subharmonic functions such that
the path metric $d$ restricted to  $U$ coincides with the Riemannian distance associated to the Riemannian metric $e^{\lambda(z)}|dz|$.  Note that the Riemannian metric $e^{\lambda(z)}|dz|$ may not be continuous. These charts $(U, z)$ produce the complex structure on $(S, d)$.  In our case, we need to use the complex structure to compute the extremal lengths of families of curves on bounded curvature surfaces. It requires the notations of the length of curves and area of subsets, which were constructed by Alexandrov.  What  Reshetynak's theorem tells us is that these notions are the same as the ones used in the complex analysis for computing extremal lengths on surfaces of bounded curvature.  The last fact we use is that for a hyperbolic convex surface $S$, the conformal structures associated with the path metrics $d^P_S$ and $d^E_S$ are the same. This is due to (1) $d^P$ and $d^E$ are conformal metrics on $\H^3$ and (2) a theorem of  Reshetynak  (Theorem 7.3.1 in \cite{res}) on isothermal coordinates. The details are in the Appendix of this paper.

\subsection{Outline of the proof of Theorem \ref{1.222} (a)}
Suppose $U=\hat{\C}-X$ is a circle domain in  $\hat{\C}$ such that $X$ contains at least three points. Let $d^U$ be the Poincar\'e metric on $U$. The goal is to  find a circle type closed set $Y \subset \mathbf S^2$ such that $(U, d^U)$ is isometric to $\partial C_K(Y) \subset (\H^3, d^K)$. 

 Produce a sequence of circle domains $U_n=\hat{\C}-X^{(n)}$ with $\partial U_n$ consisting of finitely many circles  such that \{$X^{(n)}$\} converges to $X$ in Hausdorff distance.  More precisely, using a M\"obius transformation, we may assume   $X \subset \{ z \in \C| 2 < |z| < 3\}$.  Let $W_1, ..., W_i, ...$ be a sequence of distinct connected components of $X$ such that $\cup_{i=1}^{\infty} W_i$ is dense in $X$. Since $X$ is a circle type closed set,  each disk connected component of $X$ is in the sequence.
Let $X^{(n)}=\cup_{i=1}^n W_i^{(n)}$,  where  $W^{(n)}_i=W_i$ if $W_i$ is a disk, and $W^{(n)}_i=\{ z \in \C| d^E(z, W_i) \leq l_n\}$ if $W_i$ consists of a single point.  We make $l_n$ small such that $l_n$ decreases to $0$ and $W^{(n)}_i \cap W^{(n)}_j =\emptyset $ for $i \neq j$.  This construction ensures that  $X^{(n)}$ is a circle type closed set having finitely many connected components and
$X^{(n)}$ converges to $X$ in the Hausdorff distance in $\C$.

 For each $X^{(n)}$, by Schlenker's work \cite{sch1} (see also Theorem \ref{sh11}), we construct a circle type closed set $Y^{(n)}\subset \SS^2$ such that there exists an isometry $\phi_n: (U_n, d^{U_n}) \to \partial C_K(Y^{(n)})$.   Using a  M\"obius transformation,  we may assume that $\partial C_K(Y^{(n)})$ contains the origin $(0,0,0)$ and
 $\phi_n(0)=(0,0,0) \in \partial C_K(Y^{(n)})$. By taking a subsequence if necessary, we may assume that $Y^{(n)}$ converges in Hausdorff distance to a closed set $Y \subset \SS^2$.   We will show:

 (1) The sequence $\{\phi_n\}$ contains a subsequence converging uniformly on compact subsets to a continuous map $\phi: U \to \partial C_K(Y)$.   This is achieved by showing  $\{\phi_n: (U_n , d^{\mathbf S}) \to  (\partial C_K(Y^{(n)}), d^E)\}$ is an equicontinuous family. 
 The latter is proved in \S7  using transboundary extremal lengths. In computing the extremal length, we use the fact that the path metric induced by $d^E$ on a possibly non-smooth hyperbolic convex surface in $\H_P^3$ is conformal to its intrinsic path metric induced from $d^P$.

 (2) The limit map   $\phi: (U, d^U) \to \partial C_K(Y)$ is an isometry.
 This follows from  Alexandrov's convergence Theorem \ref{convergence} and  convergence of Poincar\'e metrics (Theorem \ref{poincare}) in \S5;

 (3)  The compact set  $Y$ is of circle type.  Since the Hausdorff limit of a sequence of round disks is a round disk or a point, we will prove in \S7 that each component of $Y$ is the Hausdorff limit of a sequence of components of $Y^{(n)}$'s. This is proved using the equicontinuity property established in step (1).

\subsection{Outline of the proof of Theorem \ref{1.222} (b)}
 Part (b) of Theorem \ref{1.222} states that for any circle type closed set $Y \subset \mathbf S^2$ with $|Y| \geq 3$, there exists a circle domain $U=\hat{\C}-X$ with Poincar\'e metric $d^U$ such that  $\Sigma :=\partial C_P(Y)$ is isometric to $(U, d^U)$.  The
strategy of the proof is the same as that for part (a) of Theorem \ref{1.222}. The only technical complication is due to the estimation of modules of rings in non-smooth convex surfaces (e.g., $\partial C_P(Y)$).

By Theorem \ref{2.1},  we may assume that the set $Y$ is not contained in any circle, i.e., $C_P(Y)$ is  3-dimensional.
By composing with a M\"obius transformation, we may assume that $(0,0,0) \in \Sigma $.  Since $Y$ is a circle type closed set, there exists a sequence $\{Y_n\}$ of components of $Y$ such that $\cup_{i=1}^{\infty}Y_i$ is dense in $Y$ and $(0,0,0) \in C_P(Y_1 \cup ... \cup Y_4)$.  Denote $Y^{(n)}=\cup_{i=1}^n Y_i$ and $\Sigma_n=\partial C_P(Y^{(n)})$.  By construction $(0,0,0) \in  \Sigma_n$ for $n \geq 4$, and the sequence $\{Y^{(n)}\}$ converges in Hausdorff distance to $Y$ in $\mathbf S^2$.

Now each $\Sigma_n$ is a genus zero Riemann surface of finite type. Koebe proved that any genus zero Riemann surface is conformally equivalent to an open domain in $\hat\C$ (see \cite{k1}) and any finitely connected domain is conformally equivalent to a circle domain (see \cite{k2} or page 234 Theorem 1 in \cite{goluzin}).
So there exists a circle domain $U_n =\hat{\C}-X^{(n)}$ and a conformal diffeomorphism $\phi_n:\Sigma_n \to U_n$. Using  M\"obius transformations, we normalize $U_n$ such that
$0\in U_n$, $\phi_n(0,0,0)=0$ and  the closed unit disk $\overline{\D}$ is contained in $U_n$.
 By taking a subsequence if necessary, we may assume that $X^{(n)}$ converges in Hausdorff distance to a compact set $X$ in the spherical metric $d^{\mathbf S}$.  We will show:

 (1) The sequence $\{\phi_n\}$ contains a  subsequence converging uniformly on compact subsets to a continuous map $\phi: \Sigma \to U:=\hat \C-X$.   This is achieved by showing  $\{\phi_n: (\Sigma_n , d^E) \to  (U_n, d^{\mathbf S})\}$ is equicontinuous. The latter is proved in  \S9 using transboundary extremal lengths;

 (2) The limit map   $\phi: \Sigma \to (U, d^U)$ is an isometry.  This is a consequence of
 Alexandrov's convergence theorem and the convergence of Poincar\'e metrics theorem in \S5;

 (3)  The compact set  $X$ is of circle type.  We will prove in \S8 that each component of $X$ is the Hausdorff limit of a sequence of components of $X^{(n)}$. This is proved using the results obtained in step (1).



\section{A proof of a special case of Theorem \ref{1.222}}

Theorem \ref{1.222} for the case that the hyperbolic convex hull $C_P(Y)$ is two-dimensional has to be dealt with separately and will be proved in this section. It is an easy consequence of the Riemann mapping theorem and Carath\'eodory's extension theorem of the Riemann mapping.


\begin{theorem}\label{2.1}
Suppose $X$ is a compact subset of the circle $\mathbf S^1  \subset \hat\C$ such that each connected component of $X$ is a single point and $|X| \geq 3$.  Then there exist two closed sets $Y_1, Y_2  \subset \mathbf S^1$ whose connected components are points such that

(a)  $\hat \C-X$ is conformal to $\partial C_P(Y_1)$, and

 (b) $\partial C_P(X)$ is conformal to $\hat \C-Y_2$.
 \end{theorem}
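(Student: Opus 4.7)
Both parts follow a common scheme: realize a simply connected bordered planar Riemann surface conformally as an ideal polygon in $\H^2$, then double across the boundary. The key symmetry is the anti-holomorphic reflection $\sigma(z) = 1/\bar z$ of $\hat \C$, which preserves $\hat \C - X$ (since $X \subset \mathbb S^1$ is $\sigma$-invariant), has fixed set $\mathbb S^1 \setminus X$, and exhibits $\hat \C - X$ as the Schottky double of the bordered Riemann surface $\bar \D \setminus X$. Dually, $\partial C_H(Y)$ carries the swap involution $\tau$ whose fixed set is the geodesic boundary of $C_H(Y)$ and whose quotient is the ideal polygon $C_H(Y) \subset \H^2$.

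For part (a), the $\sigma$-invariant Poincar\'e metric on $\hat \C - X$ descends to a complete hyperbolic metric on $\bar \D \setminus X$ with geodesic boundary, because the arcs of $\mathbb S^1 \setminus X$ are the fixed set of an isometric involution. Since $\bar \D \setminus X$ is simply connected with geodesic boundary, it is isometric to a convex region in $\H^2$ bounded by geodesics with ideal vertices on $\partial \H^2 = \mathbb S^1$; that is, to an ideal polygon $C_H(Y_1)$ for some $Y_1 \subset \mathbb S^1$. The underlying conformal isomorphism is obtained by applying the Riemann mapping theorem to the interior $\D \to C_H(Y_1)^\circ$ and extending to the boundary via Carath\'eodory's extension theorem: this extension identifies the open arcs $\mathbb S^1 \setminus X$ with the geodesic sides of $C_H(Y_1)$ and the points of $X$ bijectively with the ideal vertices $Y_1$, preserving cyclic order on $\mathbb S^1$. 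Since each component of $X$ is a single point, the same holds for $Y_1$, so $Y_1$ is compact and totally disconnected. Doubling the map via the Schwarz reflection principle (it intertwines $\sigma$ with $\tau$) yields the conformal isomorphism $\hat \C - X \cong \partial C_H(Y_1)$.

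Part (b) is dual. The Riemann mapping theorem provides a conformal isomorphism $C_H(X)^\circ \to \D$; Carath\'eodory's theorem extends it to a homeomorphism of prime-end boundaries. The geodesic sides of $C_H(X)$ are sent onto a disjoint family of open arcs in $\mathbb S^1$, and the ideal vertices $X$ are sent onto the complementary set $Y_2 \subset \mathbb S^1$, which is compact and totally disconnected (in bijection with $X$). Doubling across the respective boundaries then gives $\partial C_H(X) \cong \hat \C - Y_2$.

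The main technical point is ensuring that the Carath\'eodory extension respects the dichotomy between boundary arcs (geodesic sides versus open arcs of $\mathbb S^1$) and isolated boundary points (ideal vertices versus points of $X$ or $Y_2$). This is automatic: each component of $X$ is a single point and so corresponds to a unique prime end on one side, which must map to a unique prime end on the other — necessarily an isolated boundary point rather than a point of a boundary arc. Total disconnectedness of $Y_1$ and $Y_2$ follows since the Carath\'eodory homeomorphism carries isolated prime ends to isolated prime ends.
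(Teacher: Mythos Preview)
Your approach is essentially identical to the paper's: use the involution $z\mapsto 1/\bar z$ to see that $\mathbb S^1\setminus X$ is geodesic for the Poincar\'e metric on $\hat\C-X$, develop the simply connected bordered hyperbolic surface $\bar\D\setminus X$ isometrically onto a convex hull $C_H(Y_1)\subset\H^2$, apply Carath\'eodory to the interiors (both are Jordan domains) to match $X$ with $Y_1$, and double by Schwarz reflection; part (b) runs the Riemann map in the other direction.

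One small correction: your final paragraph's appeal to ``isolated prime ends'' is muddled---points of $X$ need not be isolated (think of $X$ a Cantor set), and every boundary point of a Jordan domain is already a prime end, so this does not by itself distinguish ideal vertices from points on geodesic sides. But the justification you already gave in the middle paragraphs is the correct one and is what the paper uses: the isometry is \emph{already defined} on the boundary arcs $\mathbb S^1\setminus X$ and carries them onto the geodesic sides of $C_H(Y_1)$, so the Carath\'eodory extension of closures must send the complementary set $X$ homeomorphically onto $Y_1$; total disconnectedness of $Y_1$ (and of $Y_2$ in (b)) then follows immediately from that of $X$.
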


Recall that since $C_P(Y_i)$ is 2-dimensional, by our convention, 
$\partial C_P(Y_i)$ is the metric double of $C_P(Y_i)$ along its
boundary.

\begin{proof} To see (a), let
 $d^U$ be the Poincar\'e metric on $U=\hat \C -X$. Since $U$ is invariant under the orientation reversing conformal involution $\tau(z)=\frac{1}{\overline{z}}$,  by the uniqueness of the Poincar\'e metric,  we see that $\tau$ is an isometric involution of $(U, d^U)$. Since the fixed point set of an isometric involution of a Riemannian manifold is totally geodesic (page 59, Theorem 5.1  in \cite{kobayashi}), we see that the fixed point set $\mathbf S^1 \cap U$ of $\tau$ is a union of geodesics in the $d^U$ metric.  This implies that
 $U_0=\{|z|\leq1\}-X$ is a simply connected hyperbolic surface with a geodesic boundary in the metric $d^U$.  By the monodromy theorem, there exists an isometric immersion $\psi$ from $( U_0, d^U|_{U_0})$ into  the hyperbolic plane $ \H^2$. Since $( U_0, d^U|_{U_0})$ is convex with boundary consisting of geodesics, $\psi$ is an embedding.  Let $D$ be the image of $\phi$, which is a closed convex domain in $\H^2$ bounded by geodesics.
We claim that  $D$ is the convex hull $C_P(Y_1)$  of a closed set $Y_1 \subset \mathbf S^1$.  To see this, let $Y_1$ be the intersection of the closure of $D$ with the circle $\mathbf S^1$. By convexity, we see that $C_P(Y_1)$ is contained in $D$. To see that $D \subset C_P(Y_1)$, take a point $p$ in $\H^2 -C_P(Y_1)$. We will show that $p \notin D$. Suppose otherwise that $p \in D$.  By the separation theorem for convex sets, there exists a closed half-space $P$ of $\H^2_P$ such that $P$ contains $p$ and is disjoint from $C_P(Y_1)$. Find a geodesic $\gamma$ in $P$ containing $p$. Then $p \in \gamma $ and $\gamma  \cap C_P(Y_1) =\emptyset$.  Since the boundary of $D$ consists of geodesics and $p \in \gamma \cap D $, $\gamma $ must either intersect some boundary geodesic $\beta$ of $D$ or $\gamma  \subset D$. If $\gamma  \cap \beta  \neq \emptyset$, then using $\beta \subset C_P(Y_1)$, we see that $\gamma  \cap C_P(Y_1) \neq \emptyset$. This contradicts the construction of $\gamma$.  If $\gamma  \subset D$, then the endpoints of $\gamma $ are in $Y_1$ by construction. Therefore $\gamma \subset C_P(Y_1).$ This again contradicts that $\gamma  \cap C_P(Y_1) =\emptyset$.  
Therefore $p$ is not in $D$, i.e., $D \subset C_P(Y_1)$.   
  Now both $int(U_0)$ and $int(D)$ are Jordan domains, and $\psi$ is a conformal map between them. Therefore, by Carath\'eodory's extension theorem, $\psi$ extends to be a homeomorphism $\Phi$ between their closures which are $U_0 \cup \mathbf S^1$ and $D \cup Y_1$. This  homeomorphism $\Phi$ sends $X$   to $Y_1$. In particular, each component of $Y_1$ is a point.
  By the Schwarz reflection principle, $\Phi$ can be naturally extended to a conformal homeomorphism between $(U, d^U)$ and the metric double of  $C_P(Y_1)$ along its boundary.  The metric double, as our convention, is $\partial C_P(Y_1)$.

  To see part (b),  since $X \subset \mathbf S^1$, the hyperbolic convex hull $C_P(X) \subset \H^3$ is a topological disk contained in the hyperbolic plane $\H^2 \subset \H^3$. Then by the Riemann mapping theorem,  there exists a conformal diffeomorphism $\phi$ from  $int(C_P(X))$  to the unit disk $\mathbb D=\{ z \in \C \big | |z|<1\}$.  By Carath\'eodory's extension theorem,  $\phi$ extends to a homeomorphism $\Phi$ from the closure $\overline{C_P(X)}$ in $\R^3$ to the closed disk $\overline{\mathbb D}$. Let $Y_2=\Phi(X)$ whose components are all points. By the Schwarz reflection principle, $\Phi$ can be naturally extended to a conformal homeomorphism between $\partial C_P(X)$ and $\hat\C-Y_2$.

\end{proof}
\section{Shortest distance projections and area estimates on convex surfaces}

We prove several estimates on the shortest distance projections, which will enable us to give an area estimate of the transboundary extremal lengths on $\partial C_P(Y)$ in Section 6.4.
The following is the main theorem of this section.

\begin{theorem}\label{area} Suppose $X$ is a convex set of dimensional at least $2$ in the Poincar\'e model $\H_P^3$ of the hyperbolic 3-space. Then the Euclidean area of the convex surface  $\partial X$ is at most $16\pi$.
\end{theorem}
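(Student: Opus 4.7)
The strategy is to exhibit a surjective map $r\colon\SS^2\to\partial X$ that is $2$-Lipschitz with respect to the ambient Euclidean metric of $\overline\B^3$. Once this is in hand, the standard fact that a $k$-Lipschitz map multiplies 2-dimensional Hausdorff measure by at most $k^2$ gives
\[
\mathrm{area}^E(\partial X) \;=\; \mathcal H^2_E(r(\SS^2)) \;\leq\; 4\cdot\mathcal H^2_E(\SS^2) \;=\; 16\pi.
\]

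The map $r$ is a hyperbolic nearest-point retraction built from horoballs. Recall that in the Poincar\'e ball model a horoball based at $\xi\in\SS^2$ is the Euclidean open ball $H_\xi(\rho)$ of radius $\rho\in(0,1)$ centered at $(1-\rho)\xi$, tangent to $\SS^2$ at $\xi$ from inside; the family $\{H_\xi(\rho)\}_{\rho\in(0,1)}$ is nested and sweeps out $\B^3$. For $\xi\notin\overline X_\infty$, where $\overline X_\infty \subset \SS^2$ denotes the ideal boundary of $X$, I set $\rho(\xi):=\inf\{\rho:\overline{H_\xi(\rho)}\cap\overline X\ne\varnothing\}$; by hyperbolic convexity the tangency at this critical radius is a single point $r(\xi)\in\partial X$, and I extend $r$ continuously by $r(\xi)=\xi$ on $\overline X_\infty$. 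Surjectivity onto $\partial X$ follows from the existence of a supporting hyperbolic half-space at every $q\in\partial X$: its bounding plane in the Poincar\'e model is a Euclidean sphere orthogonal to $\SS^2$, and any ideal endpoint $\xi$ of that plane satisfies $r(\xi)=q$.

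For the Lipschitz bound, write $q_i=r(\xi_i)$ and $\rho_i=\rho(\xi_i)$; the tangency relations give $q_i=(1-\rho_i)\xi_i+\rho_i u_i$ with $u_i\in\SS^2$ an outward unit normal to $X$ at $q_i$. The pointwise inequality $|q_i-\xi_i|=\rho_i|u_i-\xi_i|\leq 2\rho_i$, together with the convexity constraints $u_i\cdot(q_j-q_i)\leq 0$ for $j\neq i$ and a direct comparison of the two horosphere equations at $\xi_1$ and $\xi_2$, yields $|q_1-q_2|\leq 2|\xi_1-\xi_2|$. For a general hyperbolically convex $X$ that is not smooth or strictly convex, I would first approximate it by its hyperbolic $\varepsilon$-neighborhoods $X_\varepsilon$ (which are smooth and strictly convex), run the argument on each $X_\varepsilon$, and pass to the Hausdorff limit as $\varepsilon\to 0^+$, using lower semicontinuity of Euclidean surface area for hyperbolically convex surfaces.

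The main obstacle is the $2$-Lipschitz estimate itself. The factor $2$ (rather than the $1$ that holds for ordinary Euclidean-convex nearest-point projection) reflects the lever-arm arising from a horoball being a Euclidean ball centered at $(1-\rho)\xi$ rather than at $\xi$. The technical delicacy is that when $\rho(\xi)$ is close to $1$, or the outward-normal field along $\partial X$ varies rapidly, the derivatives of $r$ can be large; the Lipschitz bound must then be extracted from a simultaneous analysis of the two horosphere tangency equations rather than from any naive pointwise argument.
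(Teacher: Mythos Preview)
Your overall strategy---the horoball nearest-point retraction $r\colon\SS^2\to\partial X$, surjectivity via supporting half-spaces, and the conclusion via the $4\pi$ area of $\SS^2$---is exactly the paper's. The difference, and the gap, is entirely in the $2$-Lipschitz step.

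Your sketched argument for $|q_1-q_2|\le 2|\xi_1-\xi_2|$ invokes the ``convexity constraints $u_i\cdot(q_j-q_i)\le 0$''. This is a Euclidean supporting-hyperplane inequality, and it is false here: in the Poincar\'e model a hyperbolically convex set is \emph{not} in general Euclidean convex (a hyperbolic half-space bounded by a non-diametral totally geodesic plane is a ball minus a smaller ball). The information you actually have at $q_i$ is that $q_j$ lies outside the Euclidean horoball $H_{\xi_i}(\rho_i)$, i.e.\ the quadratic condition $|q_j-(1-\rho_i)\xi_i|\ge\rho_i$, together with the analogous condition with $i,j$ swapped; there is no linear constraint of the form you wrote. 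You yourself flag the Lipschitz bound as ``the main obstacle'' and do not carry it through, so at present the proposal is a correct outline with the central estimate missing and the one concrete ingredient offered for it mistaken.

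The paper handles the $2$-Lipschitz estimate by an entirely different, two-dimensional reduction. Given $p,q\in\SS^2$ with $\pi(p)\ne\pi(q)$, let $Y$ be the hyperbolic geodesic through $\pi(p),\pi(q)$ and $W_p,W_q$ the totally geodesic planes through $\pi(p),\pi(q)$ perpendicular to $Y$; these are supporting planes for $X$, and one finds $p',q'$ on their ideal boundaries, lying in the $2$-plane $P$ through the origin containing $Y$, with $d^{\SS}(p',q')\le d^{\SS}(p,q)$ and $\pi_Y(p')=\pi(p)$, $\pi_Y(q')=\pi(q)$. The projection $\pi_Y$ restricted to the relevant arc of $\partial P$ is then identified with an explicit M\"obius inversion in a circle meeting $\partial P$ at angle $\pi/4$, whose derivative is shown to satisfy $|f'|\le 2$ by elementary trigonometry. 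This yields $d^E(\pi(p),\pi(q))\le 2\,d^{\SS}(p,q)$, which suffices for the area bound. If you want to pursue your direct three-dimensional approach, you would need to replace the false linear constraint by the two horoball inequalities and extract the bound from those; this may be feasible but is not what you have written.
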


Here the Euclidean area is the 2-dim Hausdorff measure associated to the induced path metric on $\partial X$ from the Euclidean metric $d^E$ in $\mathbf R^3$.
The above theorem holds trivially if $X$ is 2-dimensional. Since, in this case, $X$ lies in a sphere $S$ perpendicular to the unit 2-sphere $\SS^2$ such that $X$ is inside the unit ball. One sees that the Euclidean area of $X$ is at most $2\pi$ since, by our convention, $\partial X$ is the metric double of $X$. For the proof, we will assume that $X$ is 3-dimensional. 
We believe the constant $16\pi$ can be improved to $8\pi$ which can be shown to be optimal.

\subsection{The Poincar\'e and Klein models of hyperbolic 3-space}
Let
$$
\B^3= \{ (x_1, x_2, x_3) \in \R^3| \sqrt{ x_1^2+x_2^2+x_3^2} <1\}
$$
be the open unit ball in 3-space. The Poincar\'e model $(\H^3, d^P)$ or simply $\H^3_P$ of the hyperbolic space is $\B^3$ equipped with the hyperbolic metric
$$ d^Px^2=\frac{ 4 \sum_{i=1}^3 dx_i^2}{ (1-|x|^2)^2}.$$
It is a complete metric of constant sectional curvature $-1$ and is conformal to the Euclidean metric $\sum_{i=1}^3 dx_i^2$. 
By definition,
$ d^Px^2 \geq 4 \sum_{i=1}^3 dx_i^2.$
Let $d^P(x,y)$ and $d^E(x,y)=|x-y|$ be the distances associated with the Poincar\'e and the Euclidean metrics, respectively. Then
\begin{equation}\label{distance-compare} d^P(x,y) \geq 2|x-y|.
\end{equation}

The Klein model of the hyperbolic 3-space $(\H^3, d^K)$ or simply $\H^3_K$ is the unit ball $\B^3$ equipped with the Riemannian metric
$$ d^Kx^2=\frac{  \sum_{i=1}^3 dx_i^2}{ 1-|x|^2}+ \frac{ ( \sum_{i=1}^3 x_idx_i)^2}{ (1-|x|^2)^2}.$$ Since  $d^Kx^2 \geq d^Ex^2$, we have
\begin{equation}\label{dist2} d^K(x,y) \geq |x-y|.
\end{equation}
It is known, from Equations (4.5.2) and (6.1.2) in \cite{rat}, or Formula 19.6.9 in \cite{berger},  that the map 
$$
\Psi(x)=\frac{2x}{1+|x|^2}: \B^3 \to \B^3
$$ 
is an isometry from the Poincar\'e model $\H_P^3$ onto the Klein model $\H_K^3$. 
Furthermore, we have the following estimate.
\begin{proposition}\label{lip}
For all $x, y \in \R^3$, $$|\Psi(x)-\Psi(y)| \leq 2 |x-y|.$$
\end{proposition}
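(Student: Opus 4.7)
The proposition asserts that the transition map $\Psi$ between the Poincar\'e and Klein models is globally $2$-Lipschitz on $\R^3$. My plan is to prove this by a direct algebraic manipulation of $\Psi(x)-\Psi(y)$, rather than by integrating a differential bound on $\|D\Psi\|$, because the calculation on a common denominator exposes a clean vector identity.

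First I would put $\Psi(x)-\Psi(y)$ over the common denominator $(1+|x|^2)(1+|y|^2)$, obtaining
\[
\Psi(x)-\Psi(y) \;=\; \frac{2(x-y) \;+\; 2\bigl(x|y|^2 - y|x|^2\bigr)}{(1+|x|^2)(1+|y|^2)}.
\]
The key step is the following identity, verified by expanding the squared norm and collecting the cross term $2(x\cdot y)$:
\[
\bigl|\,x|y|^2 - y|x|^2\,\bigr| \;=\; |x|\,|y|\,|x-y|.
\]
Applying the triangle inequality to the numerator and using this identity then yields
\[
|\Psi(x)-\Psi(y)| \;\le\; \frac{2|x-y|\bigl(1+|x|\,|y|\bigr)}{(1+|x|^2)(1+|y|^2)}.
\]

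Finally, the Cauchy--Schwarz inequality in the form $(1+|x||y|)^2 \le (1+|x|^2)(1+|y|^2)$ shows that the factor multiplying $2|x-y|$ is at most $1/\sqrt{(1+|x|^2)(1+|y|^2)} \le 1$, completing the proof. I do not anticipate a real obstacle here: the entire argument rests on the one numerator identity $|x|y|^2 - y|x|^2| = |x||y||x-y|$, and once that is in hand everything else is elementary. (A minor remark: this estimate is actually sharp only at $x=y=0$, and an analogous calculation for the inverse map $\Psi^{-1}(u) = \frac{u}{1+\sqrt{1-|u|^2}}$ restricted to $\B^3$ would yield the companion bound one might want later, but that is not required for the statement as given.)
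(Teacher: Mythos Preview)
Your proof is correct and follows essentially the same approach as the paper: common denominator, triangle inequality, the key identity $|x|y|^2 - y|x|^2| = |x|\,|y|\,|x-y|$, and then bounding the scalar factor $\frac{1+|x||y|}{(1+|x|^2)(1+|y|^2)}$ by $1$. The only difference is in this last step, where you invoke Cauchy--Schwarz $(1+|x||y|)^2 \le (1+|x|^2)(1+|y|^2)$ while the paper instead uses $(1+|x|^2)(1+|y|^2) \ge 1+2|x||y|$ (via AM--GM) together with $\frac{1+t}{1+2t}\le 1$; both are trivially valid and neither offers any real advantage over the other.
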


\begin{proof} We have
$$ |\Psi(x)-\Psi(y)| =\frac{2}{(1+|x|^2)(1+|y|^2)} |(1+|y|^2)x-(1+|x|^2)y|$$
$$\leq \frac{2|x-y|}{(1+|x|^2)(1+|y|^2)}+\frac{2}{(1+|x|^2)(1+|y|^2)}|x|y|^2 - y|x|^2|.$$
Now using $|x|y|^2 - y|x|^2|^2 =|x|^2|y|^2|x-y|^2$, we see that
$$ |\Psi(x)-\Psi(y)|  \leq \frac{2|x-y|}{(1+|x|^2)(1+|y|^2)}+\frac{2|x-y||x||y|}{(1+|x|^2)(1+|y|^2)}$$
$$ =\frac{ 2|x-y|( 1+|x||y|)}{(1+|x|^2)(1+|y|^2)}$$
$$\leq  \frac{ 2|x-y|(1+|x||y|)}{ 1+2|x||y|}$$
$$\leq 2 |x-y|.$$
\end{proof}


\subsection{Shortest distance projections and area of hyperbolic convex surfaces}

The main tool we use to prove Theorem \ref{area} is the shortest distance projection in hyperbolic space. 
The shortest distance projection to a closed convex subset in a hyperbolic space is defined in the same way and enjoys similar properties as its counterpart in Euclidean geometry. An excellent reference of the topic is \cite{em}. We will briefly recall the relevant properties and refer to the details of the proofs to \cite{em}, pages 121-127. Recall that  $\overline{A}$ denotes the closure of a set $A$ in the Euclidean space $\R^n$.  Given a non-empty closed convex set $X$ in the n-dimensional hyperbolic space $\H_P^n$, considered as the Poincar\'e ball model, the \it hyperbolic shortest distance projection \rm (or \it the nearest point retraction  \rm    in \cite{em}) is a map $\pi=\pi^H_X: \overline{\H_P^n} \to \overline{X}$ defined as follows.  If $p \in \H^n$,  then $\pi(p) =B \cap X$ where $B$ is the largest ball centered at $p$ such that $int(B) \cap X =\emptyset$, i.e., $\pi(p) \in X$ is the point in $X$ closest to $p$. If $p \in \partial \H^n-\bar X$, then $\pi(p) = X \cap L$ where $L$ is the largest horoball whose interior is disjoint from $X$ and $p\in \bar L$. (Sometimes $p$ is called the center of the horoball $L$. See page 122 of \cite{em}).  The convexity of $X$ implies that $\overline{L} \cap \overline{X}$ consists of a single point. Note that if $p \in X$, then $\pi(p)=p$.


The basic properties of the shortest distance projection $\pi$ are in the following lemma.

\begin{lemma}\label{nearest} Suppose $\pi: \overline{\H^n_P} \to \overline{X}$ is the shortest distance projection.

(a) If $p \notin \overline{X}$, then the hyperbolic codimension-1 plane $W_p$ through $\pi(p)$ perpendicular to the geodesic from $p$ to $\pi(p)$ is a supporting plane for the convex set $X$. Furthermore, $d^P(\pi(x), \pi(y)) \leq d^P(x, y)$ for all $x,y \in \H^n$.

(b) If  $p \notin \overline{X}$ and $x \in X$, then the angle $\angle p\pi(p)x$ at $\pi(p)$ is at least $\pi/2$. 

(c) If $X$ is a geodesic in the Poincar\'e model of the hyperbolic plane $\H_P^2$ and $A$ is a connected component of $\partial \H_P^2-\partial X$, then the shortest distance projection $\pi|_{A}$  coincides with the inversion about the circle $C$ which contains $\partial X$ and bisects the angles formed by $X$ and $A$.
\end{lemma}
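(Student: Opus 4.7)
My plan is to prove the two parts by separate classical arguments.

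For part (a), I would argue by first-variation of the hyperbolic distance function. Suppose first $p \in \H^n \setminus \overline{X}$ and let $q = \pi(p)$. For any $y \in X$ with $y \neq q$, convexity of $X$ gives that the hyperbolic geodesic $\gamma : [0,L] \to X$ from $q$ to $y$, parameterized by arc length, lies in $X$. Since $t \mapsto d^P(p, \gamma(t))$ attains its minimum at $t=0$, the right derivative at $t=0$ is nonnegative; by the standard first variation formula this derivative equals $-\cos\theta$, where $\theta$ is the angle at $q$ between $\dot\gamma(0)$ and the initial direction of the geodesic from $q$ to $p$. Hence $\theta \geq \pi/2$, so $\dot\gamma(0)$ has nonpositive component along the direction from $q$ to $p$. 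Since closed geodesic half-spaces in $\H^n$ are convex, $\gamma$ stays in the closed half-space bounded by $W_p$ on the side opposite $p$, and in particular $y$ lies there, so $W_p$ supports $X$. For the boundary case $p \in \partial \B^n \setminus \overline{X}$, I would pick $p_k \in \H^n$ on the geodesic ray from $q$ toward $p$ with $p_k \to p$. By continuity of $\pi$, $q_k := \pi(p_k) \to q$, and the perpendicular hyperplanes $W_{p_k}$ converge to $W_p$; since each $W_{p_k}$ supports $X$ by the interior case, the limiting $W_p$ does as well.

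For part (b), I would reduce to an explicit computation after normalization. Applying an isometry of $\H^2_P$, I may take $X$ to be the real diameter $[-1,1]$ of the Poincar\'e disk $\D$ and $A$ to be the upper semicircle of $\partial\D$. At each endpoint $\pm 1$ of $X$, the tangent lines to $X$ and to $\partial\D$ are perpendicular, so the angle bisector going into the upper half-disk makes a $\pi/4$ angle with each. A short calculation identifies $C$ as the Euclidean circle centered at $-i$ with radius $\sqrt{2}$, and inversion about $C$ is $f(z) = -i + 2/\overline{(z+i)}$. For $z = e^{i\theta} \in A$ with $\theta \in (0,\pi)$, a direct expansion gives $f(z) = \cos\theta/(1+\sin\theta) \in (-1,1)$. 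To identify this with $\pi(z)$, I would use the equivalent characterization that for $z \in A$, $\pi(z)$ is the foot of the hyperbolic geodesic through $z$ perpendicular to $X$ (equivalent to the horoball definition, since a horocycle based at $z$ tangent to $X$ at $q'$ is necessarily perpendicular there to the geodesic from $z$ to $q'$). A direct Euclidean computation of the semicircle through $z$ perpendicular to both $\partial\D$ and $X$ locates this foot at $(1-\sin\theta)/\cos\theta$, and the identity $(1-\sin\theta)(1+\sin\theta) = \cos^2\theta$ then yields $f(z) = \pi(z)$.

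The most delicate point is the boundary case of part (a): one must interpret $W_p$ as the hyperplane through $\pi(p)$ perpendicular to the geodesic ray with ideal endpoint $p$, and check that the approximation by interior points $p_k$ passes to the limit in a way that preserves the supporting-plane property. Part (b) is essentially bookkeeping once the normalization is in place; the only real subtlety is choosing the angle bisector on the correct side of $X$ so that the resulting circle $C$ lies in the component of $\overline\D \setminus X$ containing $A$, and then keeping track of the elementary trigonometry.
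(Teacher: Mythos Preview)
Your proposal is correct. For part~(a) the paper simply cites Epstein--Marden, so your first-variation argument actually supplies more than the paper does; one small imprecision is that ``convexity of half-spaces'' is not quite the reason $\gamma$ stays on one side of $W_p$---what you need is that a geodesic not lying in a totally geodesic hyperplane meets it in at most one point, so once $\dot\gamma(0)$ points weakly away from $p$ the whole segment lies in the far half-space. (Your boundary approximation also works, and in fact sharpens: for $p_k$ on the ray from $q=\pi(p)$ to the ideal point $p$, the hyperbolic ball of radius $d^P(p_k,q)$ about $p_k$ sits inside the horoball $L$ and touches $\partial L$ only at $q$, so $\pi(p_k)=q$ exactly and $W_{p_k}=W_p$ with no limit needed.) For part~(b) the paper takes a shorter route: it passes to the upper half-plane with $X$ the positive imaginary axis and $A$ the positive real axis, where $\pi(x)=ix$ is visibly the reflection in the line through $0$ at angle $\pi/4$, a one-line verification. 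Your disk-model computation is equally valid and correctly handles the choice of bisector, but trades that simplicity for the minor advantage of staying in the Poincar\'e ball where the rest of the paper's convex-hull constructions live.
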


\begin{proof} The proof of part (a) is in Lemma II.1.3.2 and Lemma II.1.3.4 in \cite{em}. 

Part (b) follows from part (a) since the set of all points $q$ in $\H^n$ such that the angle $\angle p \pi(p) q =\pi/2$ is the codimension-1 plane $W_p$. The condition that $x$ and $p$ lie in different sides of $W_p$ implies $\angle p\pi(p)x \geq \pi/2$.

To see part (c), we use the upper-half-plane model $\{z \in \C| im(z)>0\}$ of $\H^2$ and take $X =\{ iy | y \in \R_{>0}\}$ and $A=\{ x | x \in \R_{>0}\}$. The shortest distance projection onto $X$ when restricted to $A$ is $\pi(x) = i x$ which is the same as the reflection about the line $C =\{ xe^{\pi i/4}| x \in \R\}$ on $A$. Clearly the line $C$ bisects the angle formed by $A$ and $X$.
\end{proof}



The following lemma gives an estimate of the distortion of the shortest distance projection that appeared in Lemma \ref{nearest} (c).

\begin{lemma}\label{221} Suppose $f(z)$ is the inversion about a circle $C\subset\hat\C$. 

(a) If $C\cap\hat\C$ is a straight line, then $|\overline{f}'(z)| = 1$.

(b) If $C$ is a circle in $\C$ and $z\in\C$ is outside of $C$, then 
$|\overline{f}'(z)|\leq 1$.

(c) If $C$ is a circle in $\C$ and $B$ is a circle in $\C$ intersecting $C$ with an angle of $\frac{\pi}{4}$, then $|\overline{f}'(z)| \leq 2$ for all $z \in B$.
\end{lemma}

\begin{proof} 
Part (a) is trivial. 
Now we assume that
$C$ is a circle of radius $r$ and centered at $p$. Then for any $z\in\C$,
$f(z) = \frac{r^2}{\overline{z-p}} +p$ and
$|\overline{f}'(z)|=\frac{r^2}{|z-p|^2}$. So part (b) holds.

To see part (c), by composing with a rotation and a translation, we may assume that  $B$ is a circle centered at the origin $0$. See Figure \ref{inv1}. Let  $s$ be the point in $B$ closest to $p$.  By the assumption, the angle $\angle 0qp$ is $\frac{3\pi}{4}$ and the angle $\theta =\angle sqp$ is in the interval $[\frac{\pi}{4}, \frac{3\pi}{4}]$. If $z \in B$, then $|\overline{f}'(z)| \leq ( \frac{r}{|s-p|})^2 =(\frac{\sin(\phi)}{\sin(\theta)})^2 \leq \frac{1}{\sin^2(\theta)} \leq 2$ where we have used the Sine law for the triangle $\Delta pqs$, $\phi =\angle qsp$ and $\theta \in [\frac{\pi}{4}, \frac{3\pi}{4}]$.

\begin{figure}[ht!]
\begin{center}
\begin{tabular}{c}
\includegraphics[width=0.38\textwidth]{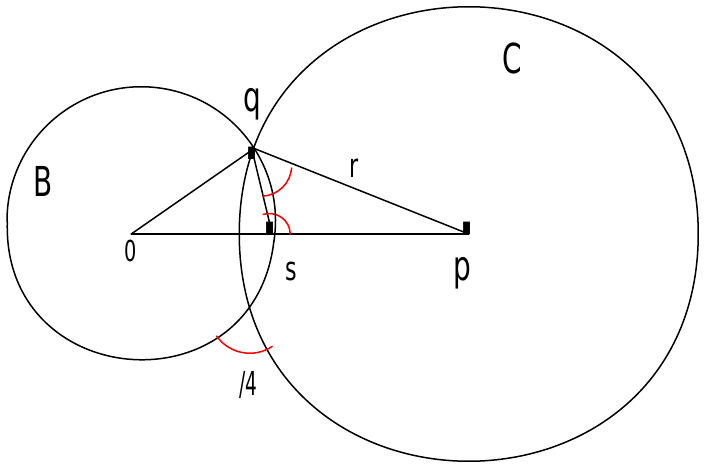}
\end{tabular}
\end{center}
\caption{Lipschitz property of inversion} \label{inv1}
\end{figure}
\end{proof}

To prove Theorem \ref{area}, we may assume the hyperbolic convex set $X$ is closed in $\H_P^3$. The result is obvious if $X$ is a 2-dimensional convex set. Indeed, in this case, $X$ is contained in a (Euclidean) sphere of radius at most one. Therefore its Euclidean area is at most $4\pi$.  Let us assume $X$ contains an interior points. 
Using the fact that the area of the 2-sphere is $4\pi$, we see that Theorem \ref{area} is a consequence of the following two properties of the shortest distance projection $\pi$.

\begin{theorem} 
\label{2Lip}
Let  $X$ be a closed convex set in the Poincar\'e model $\H_P^3$ and $\pi$ be the hyperbolic shortest distance projection from $\SS^2$ to $X$. Then for all $p, q \in \SS^2$,
\begin{equation}\label{lip2}
|\pi(p)-\pi(q)|\leq 2 d^{\SS}(p,q).
\end{equation}
\end{theorem}

We remark that the above theorem also holds for high dimensional hyperbolic spaces $\H^n$.

\begin{proof}
If $\pi(p)=\pi(q)$, then the result holds trivially. So now we assume $\pi(p)\neq \pi(q)$.
If $p\in\bar X$ and $q\in\bar X$, then easily we have that
$$
|\pi(p)-\pi(q)|=|p-q|\leq d^{\SS}(p,q).
$$
In the following argument we will assume $p\notin \bar X$ and $q\notin\bar X$, and the case $p\in\bar X$ or $q\in\bar X$ could be proved in a similar and simpler way.

Let $Y$ be the hyperbolic geodesic joining
$\pi(p)$ to $\pi(q)$ and $W_p$ and $W_q$ be the codimension-1 hyperbolic planes perpendicular to $Y$ at $\pi(p)$ and $\pi(q)$ respectively. Note that $W_p \cap W_q =\emptyset$ since $\pi(p) \neq \pi(q)$ and both are perpendicular to $Y$. Let $H_p$ and $H_q$ be the two disjoint half spaces in $\H^3$ bounded by $W_p$ and $W_q$ respectively. By Lemma \ref{nearest} (b), both angles $\angle p\pi(p) \pi(q)$ and $\angle q \pi(q) \pi(p)$ are at least $\pi/2$, it follows that $p \in U_p:=\overline{H_p}\cap\SS^2$ and $q \in U_q:=\overline{H_q}\cap\SS^2$. 
Since $H_p \cap H_q =\emptyset$,  we have that $U_p\cap U_q=\emptyset$.

\begin{figure}[ht!]
\begin{center}
\begin{tabular}{c}
\includegraphics[width=0.49\textwidth]{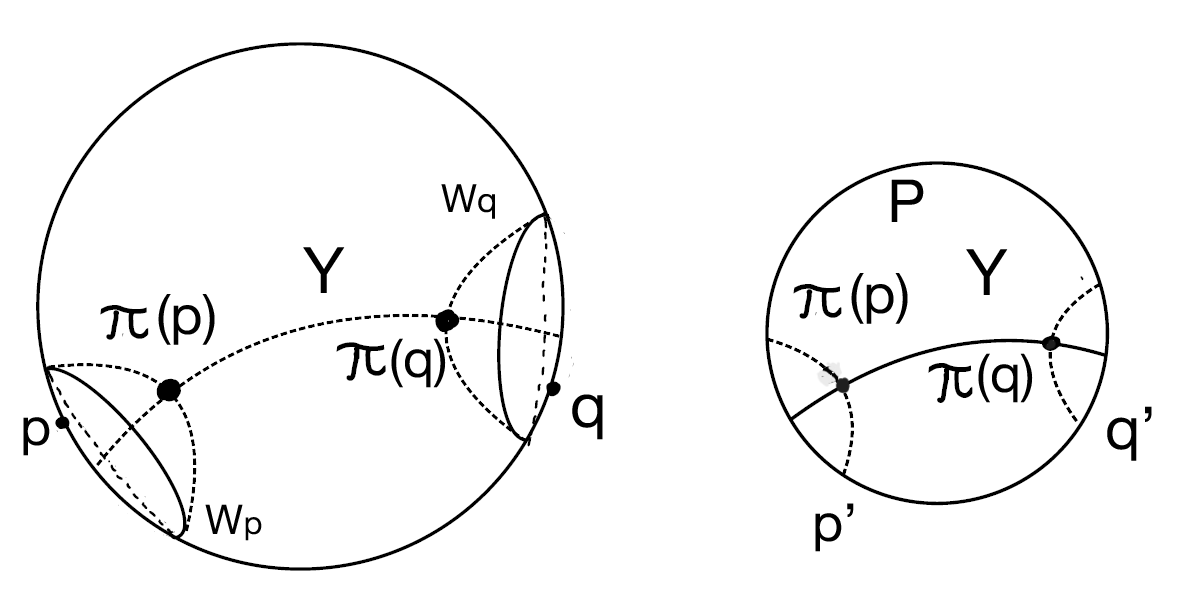}
\end{tabular}
\end{center}
\caption{Lipschitz property of nearest point projection} \label{inv2}
\end{figure}

Let $O_p$ and $O_q$ be the centers of spherical disks $U_p$ and $U_q$ respectively, and $\gamma$ be a shortest geodesic from $O_p$ to $O_q$ on $\SS^2$ such that 
$\gamma$ intersects $\partial U_p$ and $\partial U_q$ at $p'$ and $q'$ respectively. 
Then 
\begin{equation}\label{cap}  d^{\SS}(U_p, U_q) =d^{\SS}(p', q') \leq d^{\SS}(p,q). \end{equation}
Therefore it suffices to show that $|\pi(p)- \pi(q)| \leq 2 d^{\SS}(p', q')$.  To this end, let $P$ be a Euclidean plane  passing through $(0,0,0)$ and containing $Y$ and $\gamma$ and consider the unit disk $D=P\cap\overline{\mathbb B^3}$. The inequality $|\pi(p)- \pi(q) | \leq 2 d^{\SS}(p', q')$ follows from 
$ | \pi(p) -   \pi(q) | \leq  d^E_{\partial D}(p', q')$ since
since $d^{\SS}(p', q') =d_{\partial D}^E(p',  q')$ where $d_{\partial D}^E$ is the induced Euclidean path metric on the unit circle $\partial D$.

Let $M$ be the component of
$\partial D -\overline{Y}$ which contains $\{p',q'\}$; $C$ be the circle or the straight line in the plane $P$ passing through $Y \cap M$ and bisecting the angles between $Y$ and $M$; and $f$ be the inversion about $C$ in $P$. 
Then the angle between $C$ and $\partial D$ could be either $\pi/4$ or $3\pi/4$. In the case of $3\pi/4$, $M$ lies outside of the circle $C$. In either case, by Lemma \ref{221}, $|f'(z)|\leq2$ for any $z\in M$. So we only need to prove $f(p')=\pi(p)$ and $f(q')=\pi(q)$ in order to obtain $|\pi(p)-\pi(q)|\leq2d^{\SS}(p',q')$. 

To see $f(p')=\pi(p)$, note that $\pi(p)$ is the intersection point of $Y$ with $W_p\cap P$. Since the circle $C$ bisects $Y$ and $M$, $f(Y) \subset \partial D$. Furthermore, since $C$ contains the two endpoints of $Y$, $C$ is perpendicular to the circle $Z$ containing $W_P \cap P$. 
 This shows $f(Z) =Z$.   It follows that $f$ sends $Y \cap Z$ to $M \cap Z$, i.e., $f(\pi(p))=p'$ or $f(p') =\pi(p)$.
 The same argument shows $f(q')=\pi(q))$.
\end{proof}

Theorem \ref{2Lip} implies the hyperbolic shortest distance projection $\pi$ is continuous on $\SS^2$. The following lemma shows it is an onto map.

\begin{lemma} \label{homeo}
 Suppose  $X$ is a closed 3-dimensional convex set in $\H^3_P$ with non-empty interior, then the shortest distance projection $\pi$  from $\SS^2$ to $\partial X \cup (\overline{X} \cap \SS^2)$ is surjective. \end{lemma}
\begin{proof} Take a point $p \in \partial X \cup (\overline{X} \cap \SS^2)$. If
$p \in \overline{X} \cap \SS^2$, then $\pi(p)=p$. If $p \in \partial X$, let $W_p$ be a supporting plane for $X$ at $p$ and $\gamma$ be the geodesic ray  perpendicular to $W_p$ at $p$ such that $\gamma$ intersects $X$ only at $p$. The endpoint of the ray $\gamma$ is a point $q \in \SS^2$. By definition, $\pi(q)=p$.
\end{proof}

\subsection{Another estimate for the hyperbolic shortest distance projection}
\begin{figure}[ht!]
\begin{center}
\begin{tabular}{c}
\includegraphics[width=0.35\textwidth]{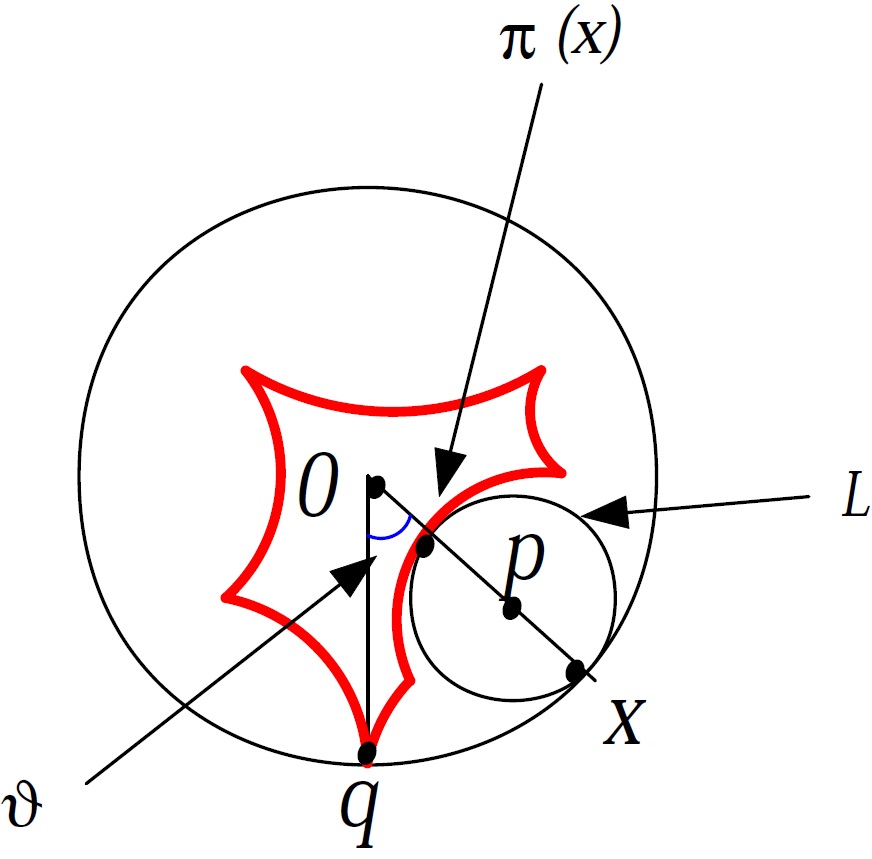}
\end{tabular}
\end{center}
\caption{Eestimate for shortest distance nearest projection} \label{inv3}
\end{figure}

\begin{proposition}\label{3.7}
Assume $X$ is a hyperbolic closed convex set of dimension at least 2 in $\H^3_P$ containing origin $O$ and $q \in\bar X\cap\SS^2$ (See Figure \ref{inv3}). Let $\pi$ be the shortest distance projection from $\SS^2$ to $\partial X \cup (\overline{X}\cap \SS^2)$. 

(a) For any $x \in \SS^2$,
$$
\frac{1}{8}d^{\mathbf S}(x,q)\leq |\pi(x)-q| \leq
2d^{\mathbf S}(\pi(x),q).
$$

(b)
Let $D_R$ be the closed ball of radius $R$ in $(\SS^2,d^{\SS})$ centered at $q$, and $B_R(q)$ be the closed ball of radius $R$ in 
$( \partial X \cup (\overline{X}\cap \SS^2) , d^E)$ centered at $q$. Then for any $R>0$ 
$$
\pi(D_R) \cap \pi(\mathbf S^2-D_{65R})  =\emptyset
$$
and
$$
B_{R/8}(q) \subset \pi(D_R) \subset B_{8R}(q).
$$
\end{proposition}

\begin{proof}
For part (a), 
$|\pi(x)-q|\leq 2d^{\mathbf S}(x,q)$ is a consequence of Theorem \ref{2Lip} using $\pi(q)=q$. It remains to prove the lower bound of $|\pi(x)-q|$ for $x\neq q$.
For simplicity we denote $d^{\mathbf S}(x,q)=\theta \in (0,\pi]$.
Assume $L$ is the largest horoball such that $int(L)\cap X=\emptyset$ and $x\in\bar L$. Let
$r$ and $p$ be the Euclidean radius and Euclidean center of the horoball $L$. Since $X$ contains $(0,0,0)$ and the interior of $L$ is disjoint from $X$, the Euclidean radius $r$ of $L$ is at most $1/2$, i.e., $r\leq1/2$.
If $\theta\geq\pi/2$, then 
$$
|q-\pi(x)|\geq |q-x|-|x-\pi(x)| = |q-x|-2r\geq\sqrt2-2\cdot\frac{1}{2}\geq\frac{\pi}{8}\geq\frac{\theta}{8} 
$$
and we are done.


If $\theta<\pi/2$,  then the Euclidean distance from $p$ to the line through $q,O$ is at least $r$, i.e.,
\begin{equation} \label{radiusest}
\sin \theta\geq\frac{r}{1-r}.
\end{equation}

Since $\pi(x)\in\partial X$, we have, 
\begin{align*}
|q-\pi(x)| \geq |q-p|- |p - \pi(x)| =|p-q|-r.
\end{align*}
To estimate the right-hand side of the above inequality, by the Cosine Law on the triangle $\triangle qOp$, we have, 
$$
|p-q|=\sqrt{1+(1-r)^2-2(1-r)\cos \theta}.
$$
Denote $f(t)=\sqrt{1+(1-t)^2-2(1-t)\cos\theta}$ for $t\in[0,1]$. Then we have
$|f(t)|\geq|\cos\theta-(1-t)|$ and
$$
f'(t)=\frac{\cos \theta-(1-t)}{f(t)}\in[-1,1].
$$
This shows that  $f(t)-t$ is a decreasing function of $t$. Note that $|p-q|=f(r)$.
Now for 
 $\theta<\pi/2$, by (\ref{radiusest}),  we have $r\leq{\sin\theta}/{(1+\sin \theta)}$ and

\begin{align*}
|p-q|-r =&f(r)-r \geq f(\frac{\sin\theta}{(1+\sin \theta)})-\frac{\sin\theta}{(1+\sin \theta)}\\
=& \frac{\sqrt{(1+\sin \theta)^2+1-2(1+\sin \theta)\cos \theta}-\sin \theta}{1+\sin \theta}\\
\geq&\frac{1}{2}(\sqrt{(1+\sin \theta-\cos \theta)^2+\sin^2 \theta}-\sin \theta)\\
\geq&\frac{1}{2}(\sqrt{\sin^2 \theta+\sin^2 \theta}-\sin \theta)\\
=&\frac{\sqrt2-1}{2}\sin \theta\\
\geq&\frac{\theta}{8}.
\end{align*}


Now we prove part (b).
If $z_1 \in D_R$ and $z_2 \in \mathbf S^2- D_{65R}$, then by part (a)
$$
|\pi(z_2)-q|\geq\frac{1}{8}d^{\SS}(z_2,q)
\geq\frac{65R}{8}>2d^{\SS}(z_1,q)\geq|\pi(z_1)-q|.
$$
So $\pi(z_1)\neq\pi(z_2)$.

Then by part (a) and the sujectivity of $\pi$ (Proposition \ref{homeo} (b)), 
$$
B_{R/8}(q)\subset\pi(\SS^2)-\pi(\SS^2-D_{R})
\subset \pi(D_R)\subset B_{8R}(q).
$$
\end{proof}

\begin{remark}  We thank the referee who pointed out that the inequality $|\pi(x)-q| \leq 2 d^{\SS}(\pi(x), q) $ follows from Theorem \ref{2Lip} and improved our original estimate.     
\end{remark}

\section{Hausdorff convergence and the Poincar\'e metrics}

Let us begin by briefly recalling the Hausdorff distance.  Suppose $A$ is a subset of a metric space $(Z,d)$ and $r>0$. The $r$-neighborhood of $A$, denoted by
$N_r(A,d)$,  is the open set $\{z \in Z|d(z,A)<r\}$.  If $A,B$ are two closed subsets of $(Z,d)$, then their \it Hausdorff distance \rm  $d_h(A,B)$  is $\inf\{ r| A \subset N_r(B,d)$ $ \text{and}$ $ B \subset N_r(A,d)\}$.  
A sequence of closed subsets  $\{ X_n\}$ in $(Z,d)$ is said to converge in Hausdorff distance to a closed set $X$ if $\lim_n d_h(X_n, X) =0$.  For a compact metric space $(Z,d)$, the set of all closed subsets in the Hausdorff distance is compact. See \cite{burago}.

Alexandrov convergence is used extensively on the convergence of convex surfaces in the hyperbolic and Euclidean spaces.  A sequence \{$X_n\}$ of closed subsets in a metric space $(Z,d)$ is \it Alexandrov convergent \rm to a closed subset $X$ if (i) for any $p \in X$, there exists a sequence $\{p_n\}$ with $p_n \in X_n$ such that $\lim_n p_n =p$ and (ii) for any convergent sequence $\{p_{n_i}\}$ with $p_{n_i}\in X_{n_i}$ and $\lim_i p_{n_i}=p$, we have $p \in X$. Alexandrov convergence is independent of the choice of the distance $d$. Clearly if $\{X_n\}$ converges to $X$ in the Hausdorff distance, then $\{X_n\}$ Alexandrov converges to $X$. In general, the converse is not true.  But if the space $(Z,d)$ is compact, then Alexandrov convergence is equivalent to the Hausdorff convergence. In particular, for a compact metric space $(Z,d)$,  the Hausdorff convergence of compact subsets is independent of the choice of metrics.   See \cite{burago} and \cite{al} for details.


\subsection{Alexandrov's work on convex surfaces}
By a \it complete convex surface \rm $S$ in $\mathbb R^3$ or $\H^3$ we mean the boundary of a closed convex set of dimension at least $2$. Here, the dimension of a convex set in $\mathbb R^3$ or $\H^3$ is defined to be the dimension of the smallest totally geodesic submanifold that contains the convex set. 
The convergence theorem of Alexandrov, which will be used extensively, is the following.

\begin{theorem}[Alexandrov] \label{convergence} Suppose $\{S_n\}$ is a sequence of complete connected convex surfaces in the Euclidean or hyperbolic 3-space Alexandrov converging to a complete connected convex surface $S$.  If $x, y \in S$ and $x_n, y_n \in S_n$ such that $\lim_n x_n =x$ and $\lim_n y_n =y$, then
$$ \lim_n d_{S_n}(x_n, y_n) =d_{S}(x,y),$$
where $d_{S}$ is the induced path metric on the convex surface $S$.
\end{theorem}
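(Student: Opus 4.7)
Proof plan:

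I would prove the two inequalities $\liminf_n d_{S_n}(x_n,y_n) \geq d_S(x,y)$ and $\limsup_n d_{S_n}(x_n,y_n) \leq d_S(x,y)$ separately; the second is where the real content lies.

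For the lower bound, the crucial observation is that on a convex surface $\Sigma$ in $\R^3$ or $\H^3$, the induced intrinsic length of any rectifiable curve $\alpha \subset \Sigma$ coincides with its ambient length. (One direction is clear from $d_\Sigma \geq $ ambient distance; the other follows because each segment $\alpha|_{[t_i,t_{i+1}]}$ is itself a competitor in the definition of $d_\Sigma(\alpha(t_i),\alpha(t_{i+1}))$.) Consequently any $d_{S_n}$-arc-length parametrization is $1$-Lipschitz in the ambient metric. Choose near-minimizers $\gamma_n \colon [0,L_n]\to S_n$ joining $x_n$ to $y_n$ with $L_n = d_{S_n}(x_n,y_n)$. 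The case $\liminf L_n = \infty$ is vacuous, so after passing to a subsequence assume $L_n \to L<\infty$. The $\gamma_n$ are uniformly $1$-Lipschitz and stay in a bounded region of the ambient space, so Arzela--Ascoli produces a uniform limit $\gamma \colon [0,L]\to S$ with $\gamma(0)=x$ and $\gamma(L)=y$; the containment $\gamma([0,L])\subset S$ follows from Alexandrov convergence. Lower semicontinuity of ambient length under uniform convergence, combined with the intrinsic-equals-ambient identity for $\gamma \subset S$, gives $d_S(x,y)\leq L(\gamma)\leq L$.

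For the upper bound, the strategy is to build an approximating curve on $S_n$ piece by piece from a near-minimizer in $S$. Fix $\varepsilon>0$ and pick a rectifiable $\gamma$ in $S$ from $x$ to $y$ of length at most $d_S(x,y)+\varepsilon$. Subdivide $\gamma$ by points $x = p_0, p_1, \dots, p_N = y$ so that $\sum_i d_S(p_i,p_{i+1})$ is within $\varepsilon$ of $L(\gamma)$ and each hop is small; Alexandrov convergence supplies $p_i^{(n)} \in S_n$ with $p_i^{(n)} \to p_i$ for each $i$, taking the endpoints to be $x_n$ and $y_n$. Near each $p_i$ the convex surface $S$ is the graph of a convex function over a supporting $2$-plane on a neighborhood of definite size, and Alexandrov convergence (equivalently Hausdorff convergence in the compact ball models) of the underlying convex bodies transfers this representation to $S_n$ for all large $n$, with graph functions converging uniformly on the base. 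Joining $p_i^{(n)}$ to $p_{i+1}^{(n)}$ along the graph over the segment between their base projections produces a short arc on $S_n$ whose ambient length tends, as $n\to\infty$, to that of the analogous arc on $S$, which in turn is within $o(d_S(p_i,p_{i+1}))$ of $d_S(p_i,p_{i+1})$ when the hop is small. Summing,
$$ \limsup_{n\to\infty} d_{S_n}(x_n,y_n) \,\leq\, \sum_i \limsup_n d_{S_n}(p_i^{(n)},p_{i+1}^{(n)}) \,\leq\, L(\gamma)+o(1), $$
and letting $\varepsilon \to 0$ together with refining the subdivision concludes.

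The main obstacle is the upper bound, specifically the uniform validity of the local graph representation as $n$ varies; this is precisely where convexity is essential, since no comparable stable local model exists for general surfaces. Additional care is needed at conical or edge points of $S$, where a short hop may not be well-approximated by its chord; this is handled by subdividing finely enough and using that on any convex surface the ratio of intrinsic to extrinsic distance for nearby points stays controlled. In the hyperbolic case one works throughout in the Euclidean coordinates of the Poincar\'e or Klein ball model, where Proposition 3.2 and similar estimates give comparability of ambient Euclidean and hyperbolic metrics on bounded sets.
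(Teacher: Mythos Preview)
The paper does not actually prove this theorem: immediately after the statement it simply records that ``the proof of this theorem for the Euclidean case is in Chapter~3 of \cite{al}'' and that the hyperbolic case is in Chapter~12 of the same book. So there is no argument in the paper to compare your proposal against; the result is quoted from Alexandrov as a black box.

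On its own merits, your outline is in the right spirit and the lower-bound half is essentially complete. The upper-bound half, however, is where Alexandrov's proof does genuine work, and your sketch is thin precisely there. Two points deserve more than a wave of the hand. First, the passage ``near each $p_i$ the convex surface $S$ is the graph of a convex function over a supporting $2$-plane on a neighborhood of definite size, and Alexandrov convergence \dots transfers this representation to $S_n$'' hides the main technical content: you need a quantitative version of the local graph representation whose domain size is stable under Hausdorff perturbation of the bodies, and you need uniform convergence of the graph functions. This is true, but it is exactly the sort of lemma that takes several pages in \cite{al}; it also requires handling the degenerate case where the limit body is $2$-dimensional, which the paper explicitly allows. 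Second, your remark that ``on any convex surface the ratio of intrinsic to extrinsic distance for nearby points stays controlled'' is not obvious at conical points (think of two points on opposite sides of a very sharp cone tip) and is itself a nontrivial consequence of convexity that Alexandrov proves separately. So your plan is sound as a roadmap, but filling in the upper bound amounts to redoing a substantial portion of Chapter~3 of \cite{al}; citing that chapter, as the paper does, is the honest alternative.
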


The proof of this theorem for the Euclidean case is on pages 91-95 of \cite{al}. The hyperbolic case was stated in section 3 of Chapter 12 of \cite{al}.

\subsection{The Poincar\'e metrics and their convergence}

If $X$ is a closed set in the Riemann sphere  $\hat{\C}$ which contains at least three points,  then none of the connected components of   $\hat{\C}-X$ is conformal to the complex plane $\C$ or the punctured plane $\C-\{0\}$. Therefore, by the uniformization theorem, each connected component of $\hat{\C}-X$ carries the Poincar\'e metric.  The \it
Poincar\'e metric \rm on $\hat{\C}-X$ is defined to be the Riemannian metric whose restriction to each connected component is the Poincar\'e metric.

The main result in this section is the following.

\begin{theorem} \label{poincare}
Suppose $\{X_n\}$ is a sequence of compact sets in the
Riemann sphere converging to a compact set $X$ in the Hausdorff distance such that $X$ contains at least three points and $X \neq \hat \C$. Let $d_n=a_n(z)|dz|$ and $d^U=a(z)|dz|$ be
the Poincar\'e metrics on $U_n=\hat{\C}-X_n$ and $U
=\hat{\C}-X$ respectively. Then  $a_n(z)$ converges uniformly on compact subsets of $U$ to $a(z)$. Furthermore, if $p,q$ are two points in a connected component of $U$ and $p_n, q_n$ are two points in a connected component of $U_n$ such that $\lim_n p_n =p$ and $\lim_n q_n =q$, then \begin{equation}\label{pequal}  \lim_n d_n(p_n, q_n) =d^U(p,q). \end{equation}
\end{theorem}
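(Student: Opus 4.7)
The plan is to prove $a_n\to a$ uniformly on compact subsets of $U$ first, after which the distance convergence (\ref{pequal}) will follow by standard geodesic comparison. My approach is a normal-family argument combined with an identification of the limit using the thrice-punctured sphere comparison.

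First I would reduce to connected components. Fix $z_0\in U$ and let $U^0$ (resp.\ $U_n^0$) denote the connected component of $U$ (resp.\ $U_n$) containing $z_0$. For large $n$, $U_n^0$ is well-defined: any compact connected set $K\subset U^0$ containing $z_0$ satisfies $d^{\mathbb S}(K,X)>0$ and hence $K\subset U_n$ for large $n$ by the Hausdorff convergence $X_n\to X$, so $K\subset U_n^0$. I would then establish two-sided uniform bounds on $a_n$ on each compact $K\subset U^0$. For the upper bound, each $z\in K$ lies in a spherical disk of fixed radius $r$ contained in $U_n^0$ for large $n$, and monotonicity of the Poincar\'e density under inclusion (larger domain implies smaller density) gives $a_n(z)\leq \rho_{B_r(z)}(z)$. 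For the lower bound, pick three distinct points $x_1,x_2,x_3\in X$ and $x_i^{(n)}\in X_n$ with $x_i^{(n)}\to x_i$; monotonicity applied to $U_n\subset \hat\C-\{x_1^{(n)},x_2^{(n)},x_3^{(n)}\}$ bounds $a_n$ from below by the Poincar\'e density of a thrice-punctured sphere, which has an explicit formula and converges uniformly on compacts of $\hat\C-\{x_1,x_2,x_3\}\supset U$. The Liouville equation $\Delta\log a_n=a_n^2$ (from curvature $-1$) together with these bounds yields uniform $C^2$ control on $\log a_n$ by elliptic regularity, so Arzel\`a--Ascoli and a diagonal argument provide, for every subsequence of $\{a_n\}$, a further subsequence converging uniformly on compact subsets of $U^0$ to a positive continuous limit $a_\infty$.

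To identify $a_\infty=a$, I would observe that $a_\infty|dz|$ is a smooth Riemannian metric on $U^0$ of constant curvature $-1$ (it inherits the Liouville equation in the limit), and I must show it equals the Poincar\'e density. The Ahlfors--Schwarz lemma immediately gives $a_\infty\leq a$. For the reverse inequality it suffices to show that $a_\infty|dz|$ is a complete metric on $U^0$, since then uniqueness of the complete curvature-$-1$ metric in a conformal class forces $a_\infty=a$. For completeness, fix $p\in U^0$ and $r>0$; I will show $B^{a_\infty}_r(p)$ is relatively compact in $U^0$. For any $x\in X$, choose two other points $x',x''\in X$ (using $|X|\geq 3$), pick $x^{(n)},{x'}^{(n)},{x''}^{(n)}\in X_n$ converging to them, and pass to the limit in the lower bound $a_n\geq$ (thrice-punctured sphere density with those punctures) to get $a_\infty\geq \rho^{\ast}$ on $U$, where $\rho^{\ast}$ is the Poincar\'e density of $\hat\C-\{x,x',x''\}$. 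This gives $d^{a_\infty}(p,z)\geq d^{\rho^{\ast}}(p,z)\to \infty$ as $z\to x$ (by completeness of the thrice-punctured sphere metric). Hence $B^{a_\infty}_r(p)$ is bounded away from $x$; by compactness of $X$ it is bounded away from all of $X$, so it is relatively compact in $U^0$. Uniqueness of the subsequential limit then forces $a_n\to a$ uniformly on compacts.

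Finally, (\ref{pequal}) follows from a standard two-sided estimate: an upper bound on $\limsup_n d_n(p_n,q_n)$ comes from transporting a near $d^U$-minimizing curve from $p$ to $q$ (compactly supported in $U$) to a curve from $p_n$ to $q_n$ and using uniform convergence on its image; the matching lower bound follows because near-minimizing $d_n$-paths from $p_n$ to $q_n$ are confined to a fixed compact of $U$ by the same thrice-punctured sphere comparison, so by Arzel\`a--Ascoli they subsequentially converge to a curve in $U$ from $p$ to $q$ whose $a$-length is at most $\liminf_n d_n(p_n,q_n)$. The main obstacle is the completeness step, where the crucial use of $|X|\geq 3$ allows the thrice-punctured sphere comparison to reach every point of the boundary $X$ simultaneously.
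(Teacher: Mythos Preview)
Your proposal is correct and follows the same overall architecture as the paper (subsequential compactness of $a_n$, identify the limit by showing it is a complete curvature $-1$ metric, then deduce distance convergence), but the compactness step is obtained by a genuinely different mechanism. The paper first normalizes so that $\{0,1,\infty\}\subset X_n$, then on each small disk $D\subset U$ uses the developing map $f_n:(D,d_n)\to(\mathbb D,d_{\mathbb D})$, which is a bounded holomorphic function; Montel's theorem gives convergence of $f_n$, and $a_n=2|f_n'|/(1-|f_n|^2)$ converges with it. Your route (two-sided density bounds from monotonicity, then the Liouville equation $\Delta\log a_n=a_n^2$ with elliptic estimates) avoids the developing map entirely and gives the stronger conclusion of $C^k$ convergence for every $k$. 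The paper's approach is more elementary (no PDE estimates) but yields only $C^0$ convergence of $a_n$, which is all that is needed. Your completeness argument via thrice-punctured comparison at each boundary point is essentially the same as the paper's, which carries it out with the fixed triple $\{0,1,p\}$.

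One point to tighten: in the lower bound for $d_n(p_n,q_n)$ you assert that near-minimizing $d_n$-paths are confined to a fixed compact of $U$ ``by the same thrice-punctured sphere comparison.'' That comparison only keeps the paths away from three chosen points of $X$, not from all of $X$. The paper handles this by taking $W=\overline{B_{3R}(0,d^U)}$ with $R$ large enough that $d_n(p_n,q_n)\le R/2$, noting $W\subset U_n$ for large $n$, and proving by contradiction (using the already-established density convergence on $W$) that $d_n(p_n,\partial W)\ge R$; the $d_n$-geodesic is then trapped in $W$. You should replace the thrice-punctured justification here by this (or an equivalent) argument.
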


Note that by Hausdorff convergence, for any compact set $K \subset U$, $K \subset U_n$ for $n$ large. Therefore $a_n$ is defined on $K$ for large $n$.
Also, by our convention,  the distance $d_n$ in (\ref{pequal}) denotes the Riemannian distance associated with the Poincar\'e metric $d_n=a_n(z)|dz|$. 
\begin{proof}
To begin, let us recall a known theorem from Riemannian geometry about the convergence of Riemannian distance functions when the Riemannian metrics converge.  See for instance Proposition 11.3.2 in \cite{petersen}. We provide a short proof for completeness.

\begin{lemma}\label{r-converge}  Assume $U_n$ and $U$ are the same as in Theorem \ref{poincare}. Suppose $d_n =a_n(z)|dz|$ is a complete Riemannian metric on $U_n$ such that $\{a_n(z)\}$ converges uniformly on compact subsets of $U$ to a positive function $b(z)$ defined on $U$.  Let $d_{\infty}=b(z)|dz|$ be the limiting Riemannian metric and let $p$ and $q$ be two points in a connected component of $U$.  Then,
\begin{equation}\label{ineq12} \limsup_n d_n(p,q) \leq d_{\infty}(p,q), \end{equation} and 
if furthermore $d_{\infty}$ is a complete Riemannian metric, 
\begin{equation}\label{ineq13}\lim_{n} d_n(p, q) =d_{\infty}(p,q).\end{equation}
\end{lemma}
\begin{proof} To prove (\ref{ineq12}), take any $\epsilon >0$ and a smooth path
$\gamma: [0,1] \to (U, d_{\infty})$ from $p=\gamma(0)$ to $q =\gamma(1)$ such that $d_{\infty}(p, q) \geq l_{\infty}(\gamma) -\epsilon$.  Since $\{a_n(z)\}$ converges uniformly on the image of $\gamma$, we have $\lim_n l_n(\gamma) =l_{\infty}(\gamma)$ where $l_n(\gamma)$ and $l_{\infty}(\gamma)$ are the lengths of the curve $\gamma$ in the Riemannian metrics $d_n$ and $d_{\infty}$ respectively. By definition $l_n(\gamma) \geq d_n(p,q)$, we have,  
$$d_{\infty}(p,q) \geq l_{\infty}(\gamma) -\epsilon =\lim_n l_n(\gamma) -\epsilon \geq \limsup_n d_n(p,q) -\epsilon.$$ Therefore 
(\ref{ineq12}) holds. 

To prove (\ref{ineq13}), by (\ref{ineq12}), it suffices to show  $$d_{\infty}(p,q) \leq \liminf_n d_n(p, q).$$  
By (\ref{ineq12}),  we choose $R>0$ large such that $d_n(p, q) \leq R$ for all $n$. Consider the set $W =\overline{B_{4R}(p, d_{\infty}) }=\{ x \in U | d_{\infty}(x,p) \leq 4R\}$. Since $d_{\infty}$ is complete, $W$ is compact. Therefore,  $W \subset U_n$ for $n$ large. Since $\{a_n(z)\}$ converges uniformly to $b(z)$ on $W$, so for sufficiently large $n$, $a_n(z)\geq b(z)/2$ for all $z\in W$. As a consequence 
$$
d_n(p,\partial W)\geq d_{\infty}(p,\partial W)/2=2R.
$$
This shows, using $d_n(p, q) \leq R$ and $d_n$ is  complete,  that the shortest geodesic $\gamma_n$ in $(U_n, d_n)$ joining $p$ to $q$ is contained in the compact set $W$. 
Since $\{a_n(z)\}$ converges to $b(z)$ uniformly on $W$, there exists a sequence of positive numbers $\epsilon_n$ converging to $0$, such that
$$ a_n(z)\geq (1-\epsilon_n)b(z) $$
for all $z\in W$.
Then
$$
\liminf_n d_n(p,q)=
\liminf_n l_{n}(\gamma_n)
\geq\liminf_n(1-\epsilon_n) l_{\infty}(\gamma_n)
$$
$$
\geq\liminf_n(1-\epsilon_n)d_{\infty}(p,q)=d_{\infty}(p,q).
$$
\end{proof}

It suffices to prove the theorem for the case that all $X_n$ contain a fixed set of three points.  Indeed, let $\{u,v,w\} \subset X$ with $u,v,w$ pairwise distinct and consider sequences $u_n, v_n, w_n \in X_n$ such that $\lim_n u_n =u, \lim_n v_n =v$ and $\lim_n w_n =w$. There exists a M\"obius transformation $\psi_n$  sending $u_n, v_n, w_n$ to $u, v, w$ respectively. By the construction, $\psi_n$ converges uniformly to the identity map in the spherical metric $d^{\mathbf S}$ on $\hat{\C}$. This implies that  $\psi_n(X_n)$ converges in Hausdorff distance to $X$ and $\{u,v,w\} \subset \psi_n(X_n)$. Now suppose the theorem has been proved for the sequence $\psi_n(X_n)$. Using the fact that $\psi_n$ induces an isometry between Poincar\'e metrics on the open sets $\hat{\C} -X_n$ and $\hat{\C} -\psi_n(X_n)$, we see Theorem \ref{poincare} holds for the general case.

Now using a M\"obius transformation, we may assume that
 $X_n$ contains $\{0,1, \infty\}$ and converges to $X$ in Hausdorff
 distance. 

The strategy of the proof is as follows. First, we show that for any compact set $K$ in $U$, the family of functions
$\{a_n|_K\}$ contains a uniformly convergent subsequence.  By the Cantor diagonal process, we see that there is a subsequence of $\{a_n\}$ which converges uniformly on compact subsets to a limit function $b(z)$ on $U$. The limiting metric $b(z)|dz|$ 
can be shown easily to have constant curvature $-1$.  Finally, we show that the hyperbolic metric $b(z)|dz|$ is complete. Therefore, by the uniqueness of the Poincar\'e metric, $b(z)|dz|$ is the Poincar\'e metric $d^U=a(z)|dz|$.   Since all limits of convergent subsequences are the same, it follows that the sequence $\{a_n(z)\}$ converges to $a(z)$.

To show that $\{a_n|_K\}$ contains a convergent subsequence in the $L^{\infty}$-norm, write the open set $U$ as a union of open Euclidean round disks $B_j$ such that the Euclidean closure $\overline{B_j}$ is still in $U$.  Then each compact set $K$ in $U$ is contained in a finite union of these closed disks $\overline{B_j}$.  By using the Cantor diagonal process,  it suffices to prove the statement for $K$ to be a compact ball $\{z ||z-p|\leq  r\}$ in $U$.  We will use the following well-known consequence of the Schwarz-Pick lemma (see Theorem 10.5 in \cite{beardon} for proof). 

 \begin{lemma}(Schwarz-Pick) \label{332}   Suppose  $A$ and $B$ are two open sets in $\hat{\C} -\{q_1, q_2, q_3\}$ such that $A \subset B$ and
 $d^A=a_A(z)|dz|$ and $d^B=b_B(z)|dz|$ are the Poincar\'e metrics on $A$ and $B$ respectively. Then
$d^A \geq d^B$, i.e., $a_A(z) \geq b_B(z)$ for all $z\in A$. 

\end{lemma}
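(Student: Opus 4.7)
The statement is the classical monotonicity of the Poincar\'e metric under inclusion, so the plan is to reduce it to the Schwarz--Pick lemma on the unit disk via universal covers. First I would reduce to a single component: let $A_0$ be any connected component of $A$ and let $B_0$ be the unique connected component of $B$ containing $A_0$. Since $A_0 \subset B_0 \subset \hat{\C}-\{q_1,q_2,q_3\}$, the open set $B_0$ omits at least three points of $\hat{\C}$, so by the uniformization theorem its universal cover is the unit disk $\D$; the same is true of $A_0$. Thus both $A_0$ and $B_0$ carry canonical hyperbolic metrics, which by definition coincide with the restrictions $d^A|_{A_0}$ and $d^B|_{B_0}$ of the Poincar\'e metrics on $A$ and $B$. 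It therefore suffices to prove the pointwise inequality $a_A(z)\geq b_B(z)$ on $A_0$.

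Next I would choose holomorphic universal covers $\pi_A:\D\to A_0$ and $\pi_B:\D\to B_0$, both local isometries when $\D$ is given its standard Poincar\'e metric $\rho_\D(w)|dw|=\frac{2|dw|}{1-|w|^2}$ and $A_0,B_0$ are given their Poincar\'e metrics. Pick any $z_0\in A_0$ together with preimages $w_0\in\pi_A^{-1}(z_0)$ and $w_0'\in\pi_B^{-1}(z_0)$. Since $\D$ is simply connected, the holomorphic map $\iota\circ\pi_A:\D\to B_0$ (where $\iota:A_0\hookrightarrow B_0$ is the inclusion) lifts uniquely through $\pi_B$ to a holomorphic map $\widetilde{\iota}:\D\to\D$ with $\widetilde{\iota}(w_0)=w_0'$ and $\pi_B\circ\widetilde{\iota}=\iota\circ\pi_A$.

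Now I would invoke the classical Schwarz--Pick lemma on the unit disk, which gives
\[
\rho_\D(\widetilde{\iota}(w))\,|\widetilde{\iota}'(w)|\;\leq\;\rho_\D(w)\qquad\text{for all }w\in\D.
\]
Because $\pi_A$ and $\pi_B$ are local isometries, we have $a_A(\pi_A(w))\,|\pi_A'(w)|=\rho_\D(w)$ and $b_B(\pi_B(w'))\,|\pi_B'(w')|=\rho_\D(w')$. Differentiating the identity $\pi_B\circ\widetilde{\iota}=\pi_A$ at $w=w_0$ and combining with the Schwarz--Pick inequality yields $a_A(z_0)\geq b_B(z_0)$. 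Since $z_0\in A_0$ was arbitrary and the components exhaust $A$, this gives the desired inequality $d^A\geq d^B$ on all of $A$.

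The argument is routine; the only point requiring a brief check is that both $A_0$ and $B_0$ are genuinely hyperbolic, i.e., that neither is conformally $\C$ or $\C\setminus\{0\}$. That is guaranteed by the hypothesis $B\subset\hat{\C}-\{q_1,q_2,q_3\}$, which forces every component of $B$ (and hence of $A$) to omit at least three points of $\hat{\C}$. No other obstacle arises.
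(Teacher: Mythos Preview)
Your proof is correct and is the standard argument for this classical monotonicity statement. The paper itself does not give a proof: it simply introduces the lemma as ``a well known consequence of the Schwarz-Pick lemma whose proof is omitted.'' Your lift-to-the-universal-cover approach is exactly the expected justification, and the only delicate point---that every component of $A$ and $B$ is genuinely hyperbolic---you address explicitly via the hypothesis $B\subset\hat{\C}-\{q_1,q_2,q_3\}$.
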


 Let $d_{0,1,\infty}$ be the Poincar\'e metric on $\hat\C-\{0,1,\infty\}$. Since $U_n \subset \hat\C-\{0,1,\infty\}$, by Lemma \ref{332},  \begin{equation}\label{hp1}  d_n \geq d_{0,1,\infty}.
 \end{equation}

\begin{lemma}\label{L2} If $K =\{z | |z-p|\leq r\}$ is in $U_n$ for all $n$ large, then the sequence $\{ a_n|_K\}$ contains a convergent subsequence in the $L^{\infty}$-norm. Furthermore, any limiting function $b(z)$ produces a hyperbolic metric $b(z)|dz|$ on $int(K)$.
\end{lemma}
\begin{proof}  
Let $D$ be an open disk centered at $p$ containing $K$ such that $D \subset U_n$ for $n$ large. Consider the incomplete simply connected hyperbolic surface  $(D, d_n|_D)$. Since $D$ is simply connected, by the monodromy theorem, there exists an orientation preserving isometric immersion $f_n: (D, d_n|_D) \to (\D, d_{\D})$ such that $f_n(p)=0$. 
The map $f_n$ is also a branch of an inverse of the universal covering
map.
In particular,
 $f_n: D \to \D$  are holomorphic maps bounded by 1. 
 Therefore
$\{f_n\}$ forms a normal family and contains a subsequence that converges uniformly on compact sets to an analytic function $h$  in $D$. For simplicity, we assume that the subsequence is \{$f_n$\}.  We claim that $|f_n'(p)|$ is bounded away from $0$. Indeed, consider the standard tangent vector $v=\frac{\partial}{\partial x}$ at $p$. By (\ref{hp1}),  the length of $v$ in $d_n$ is at least the length $\delta$ of $v$ in $d_{0,1,\infty}$. It follows that the length of $f_n'(p)$ in the Poincar\'e metric on $\mathbb D$ is at least $\delta$.
This shows that $h'(p) \neq 0$ and, therefore, $h$ is not a constant. Since $h(p)=0$, $h'(p) \neq 0$ and $|h(z)| \leq 1$, we see that $|h(z)|<1$.   Furthermore, the same argument shows $h'(z) \neq 0$ for all $z \in D$. 
    Since the Poincar\'e metric on $\D$ is $\frac{2 |dw|}{1-|w|^2}$, it follows that  $a_n(z) =2\frac{|f_n'(z)|}{1-|f_n(z)|^2}$.  By the uniform convergence of the analytic functions $f_{n}$, we conclude that $a_{n}$ converges uniformly to $b(z)=2\frac{|h'(z)|}{1-|h(z)^2|}$ on $K$.  Since $h$ is analytic and $h'(z) \neq 0$,  hence $b(z)|dz|$ is a hyperbolic metric.
\end{proof}



The next lemma shows the limiting metric $b(z)|dz|$ is complete. 

\begin{lemma}\label{L1} 
If $a_n(z)$ converge to $b(z)$ uniformly on any compact subset of $U$, then $d_\infty=b(z)|dz|$ is a complete Riemannian metric on each component of $U$. In particular, $b(z)|dz|$ is the Poincar\'e metric $d^U=a(z)|dz|$ on $U$.
\end{lemma}
\begin{proof}
Take a connected component $U'$ of $U$ and a
Cauchy sequence $x_n$ in $(U', d_{\infty}|_{U'})$. 
By the Schwarz-Pick lemma applied to $U' \subset \hat{\C}-\{0,1, \infty\}$, 
$
d_{0,1,\infty}(x,y)\leq
d_\infty(x,y)
$
for all $x,y\in U'$ and hence 
$\{x_n\}$ is a Cauchy sequence in $\hat\C-\{0,1,\infty\}$ in the
$d_{0,1,\infty}$ metric. In particular,  there is a point $p \in
\hat\C-\{0,1,\infty\}$ so that $\lim_{n} x_n=p$. We claim that
$p \in U$.

Assuming the claim and using the fact that the topology determined
by $d_{\infty}$ and the Euclidean metric $d^E$ on $U$ are the same, we see that $p$ is in $U'$ and $x_n$ converges to $p$ in the $d_{\infty}$ metric.

To see the claim, suppose otherwise that $p\in X$. Then $U
\subset \hat\C-\{0,1, p\}$. By the definition of Hausdorff convergence,
there exists a sequence $p_n \in X_n$ such that $\lim_n p_n =p$ and $p_n \neq 0, 1,\infty$. Let $d_n'$
be the Poincar\'e metric on $\hat\C-\{0,1,p_n\}$. By Lemma \ref{332} and  $\hat\C-X_n \subset
\hat\C-\{0,1,p_n\}$, we have $ d_n' \leq d_n$.
Let $d_{0,1,p}$ be the Poincar\'e metric on $\hat\C-\{0,1,p\}$ and $\rho_n$ be the  M\"obius transformation sending the triple $(0, 1, p_n)$ to $(0, 1, p)$. Then $\rho_n$ converges to the identity map uniformly on $(\hat{\C}, d^{\\S})$ and $\rho_n$ is an isometry from
$(\hat\C-\{0,1,p_n\},d_n')$ to $(\hat\C-\{0,1,p\},d_{0,1,p})$. In particular, $
d_{0,1,p}(x,y) =\lim_nd_{0,1,p}(\rho_n(x),\rho_n(y))$.
For all $x,y\in U'$,  by (\ref{ineq12}) in Lemma (\ref{r-converge}), we have
$$
d_{0,1,p}(x,y)
=\lim_nd_{0,1,p}(\rho_n(x),\rho_n(y))
=\lim_nd_n'(x,y) \leq \limsup_n d_n(x,y)
$$
$$
\leq d_\infty(x,y).
$$
Hence $\{x_n\}$ is a
Cauchy sequence in the $d_{0,1,p }$ metric on $\hat\C-\{0,1,p\}$. Since,
$d_{0,1,p}$ is a complete metric, it follows that there is $q \neq
p$ in $\hat\C-\{0,1,\infty\}$ such that $\lim_n x_n=q$ in $\hat\C$. This
contradicts the assumption that $\lim_n x_n =p$ in $\hat\C$.  \end{proof}

Finally, let us prove (\ref{pequal}). Note that 
\begin{equation} \label{referee1}
|d_n(p_n, q_n)-d^U(p,q)|\leq
d_n(p_n,p)+d_n(q_n, q) +
|d_n(p,q)-d^U(p,q)|. 
\end{equation}
By Lemmas \ref{r-converge} and \ref{L1}, we have  $|d_n(p,q) -d^U(p,q)| \rightarrow0$. It remains to show that both 
$d_n(p_n, p)$ and $d_n(q_n, q)$ converge to zero.  To see this, 
since in a small neighborhood of $p$, $|a_n(z)|$ is uniformly bounded and the Euclidean distance $|p_n-p|$ goes to 0, we see that $d_n(p_n, p)$ is bounded by $C |p_n-p|$ for some constant $C$ independent of $n$.  Therefore, $\lim_n d_n(p_n, p) =0$ and similary, $\lim_n d_n(q_n, q)=0$. 
\end{proof}
\begin{remark} We thank the referee for suggesting Lemma \ref{r-converge} and the estimate (\ref{referee1}) which drastically simplifies our original proof.  

\end{remark}

\subsection{A generalized form of Arzela-Ascoli theorem}
For our proof, we need a slightly more general form of the Arzela-Ascoli theorem.  Recall that a family of maps $f_n: (X_n, d_n)  \to (Y_n, d_n')$ between metrics spaces is called \it equicontinuous \rm if for any $\epsilon >0$, there exists $\delta>0$ such that for all $n$ and $x_n, y_n \in X_n$ with $d_n(x_n, y_n) < \delta$ we have that $d_n'(f_n(x_n), f_n(y_n)) < \epsilon$.

\begin{theorem}\label{aa}Suppose  $(Z,d ) $ and $(Y, d')$ are compact metric spaces and  \{$f_n:X_n\rightarrow Y$ \} is an equicontinuous family where $X_n \subset Z$ are compact.  Let  $X \subset Z$ be a compact subset such that for any $x\in X$ there exists a sequence $x_n\in X_n$ converging to $x$. Then there exists a subsequence $\{f_{n_i}\}$ converging  uniformly to some continuous function $f:X\rightarrow Y$, i.e., for any $\epsilon>0$ there exist $\delta>0$ and $N>0$ such that for any $i \geq N$, $x_{n_i}\in X_{n_i}$ and $x\in X$ with $d(x_{n_i},x)<\delta$, we have $d'(f_{n_i}(x_{n_i}),f(x))<\epsilon$.
\end{theorem}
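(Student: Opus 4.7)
The plan is to imitate the classical Arzel\`a--Ascoli argument, adapted to the setting of varying domains. First, using separability of the compact metric space $(Z,d)$, I would choose a countable dense subset $\{q_k\}_{k\geq 1}$ of $X$ and, by hypothesis, fix for each $k$ a sequence $x_n^{(k)} \in X_n$ with $x_n^{(k)} \to q_k$. Since $Y$ is compact, each sequence $\{f_n(x_n^{(k)})\}_n$ has a convergent subsequence in $Y$, and a standard Cantor diagonal extraction produces a single subsequence $\{n_i\}$ for which $f_{n_i}(x_{n_i}^{(k)})$ converges for every $k$. I would define $f(q_k)$ to be this limit.

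Next, I would use equicontinuity to show that $f$ is uniformly continuous on $\{q_k\}$. Given $\epsilon>0$, let $\delta>0$ be the modulus provided by the equicontinuity of $\{f_n\}$ for tolerance $\epsilon$. If $d(q_j,q_k) < \delta/2$, then $d(x_{n_i}^{(j)}, x_{n_i}^{(k)}) < \delta$ for all large $i$, so $d'(f_{n_i}(x_{n_i}^{(j)}), f_{n_i}(x_{n_i}^{(k)})) < \epsilon$; passing to the limit gives $d'(f(q_j), f(q_k)) \leq \epsilon$. Since $X$ is compact (hence complete) and $Y$ is compact (hence complete), $f$ extends uniquely to a continuous function $f: X \to Y$.

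Finally, to establish the strong uniform convergence, given $\epsilon>0$ I would take $\delta_0$ from equicontinuity for tolerance $\epsilon/3$ and set $\delta = \delta_0/4$, further shrinking $\delta$ so that $d(x,y) < 3\delta$ in $X$ forces $d'(f(x), f(y)) < \epsilon/3$. Using compactness of $X$, pick a finite $\delta$-net $q_1,\ldots,q_m$ from the dense subset and $N$ so large that $d(x_{n_i}^{(k)}, q_k) < \delta$ and $d'(f_{n_i}(x_{n_i}^{(k)}), f(q_k)) < \epsilon/3$ for all $i \geq N$ and $k \leq m$. Then for any $x \in X$ and $x_{n_i} \in X_{n_i}$ with $d(x_{n_i}, x) < \delta$, choosing $q_k$ with $d(x, q_k) < \delta$ gives $d(x_{n_i}, x_{n_i}^{(k)}) < 3\delta \leq \delta_0$, and a triangle inequality across $f_{n_i}(x_{n_i}^{(k)})$ and $f(q_k)$ yields $d'(f_{n_i}(x_{n_i}), f(x)) < \epsilon$.

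The argument is essentially bookkeeping. The only subtle point, and the main thing to be careful about, is that the strong form of uniform convergence required here allows the argument of $f_{n_i}$ to be an arbitrary point of $X_{n_i}$ close to $x$, not merely one of the preselected diagonal sequences $x_{n_i}^{(k)}$; this is exactly why equicontinuity must be invoked a second time in the last step, to shift from convergence along the chosen sequences to convergence along arbitrary approximating sequences.
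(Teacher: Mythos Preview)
Your proposal is correct and follows essentially the same route as the paper's proof: a countable dense subset of $X$, approximating sequences in the $X_n$, Cantor diagonalization to define $f$ on the dense set, uniform continuity of $f$ via equicontinuity and then extension to $X$, and finally a finite-net plus three-term triangle inequality to upgrade pointwise convergence along the chosen sequences to the required strong uniform convergence. The constants and bookkeeping differ slightly (the paper uses a $\delta/3$-net where you use a $\delta$-net with $\delta=\delta_0/4$), but the arguments are the same.
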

\begin{proof} Since $X$ is compact,  we can find a countable subset $A \subset X$ such that its closure $\overline{A}=X$. Then for
 any $\epsilon>0$ there exists a finite subset $A_\epsilon\subset A$ such that
$$
X \subset\cup_{a\in A_\epsilon}B_{\epsilon}(a,d).
$$
For any $a\in A$, by the assumption, there are  $a_{n}\in X_n$  such that $\lim_n a_n =a$.
By the standard diagonal method, we find a subsequence of $\{f_n\}$ which, for simplicity,  we may assume is $\{f_{n}\}$ itself, such that $\{f_{n}(a_{n})\}$ converges for all $a \in A$.  Define
$$
f(a)=\lim_{n\rightarrow\infty}f_{n}({a_{n}}).
$$
We first claim that $f: A \to Y$ is uniformly continuous.  Indeed, for any $\epsilon>0$ there exists $\delta>0$ such that if $x,y\in X_n$ with $d(x,y)<\delta$, then $d'(f_n(x),f_n(y))<\epsilon$.  Now if
$a,a'\in A$ with $d(a,a')<\delta/2$,   then  $d(a_{n},a_{n}')<\delta$ for $n$ sufficiently large, and
$$
d'(f(a),f(a'))=\lim_{n\rightarrow\infty}d'( f_{n}(a_{n}),f_{n}(a_{n}'))\leq\epsilon.
$$
Since $f$ is uniformly continuous on $A$,  we can extend $f$ to a uniformly continuous function, still denoted by $f$,  to $X$.  Now to see the uniform convergence of $f_n$ to $f$,  take any $\epsilon>0$. There  exists $\delta>0$ such that

(1) for any $n$ and $x,y\in X_n$ with $d(x,y)\leq \delta$, $d'(f_n(x),f_n(y))<\epsilon/3$; and

(2) for any $x,y\in X$ with $d(x,y)\leq \delta$, $d'(f(x),f(y))<\epsilon/3$. \\

Find $N=N(\epsilon)$ such that for any $n\geq N$ and  any $a\in A_{\delta/3}$, there exists $a_n \in X_n$ such that $d(a_{n},a)<\delta/3$  and $d'(f_{n}(a_{n}),f(a))<\epsilon/3$. This is possible since $A_{\delta/3}$ is a finite set.
Then for any $x\in X$, find an $a\in A_{\delta/3}$ such that $d(x,a)<\delta/3$. If  $n\geq N$ and $x_n\in X_n$ with $d(x,x_n)<\delta/3$,  then
$$
d(x_n,a_{{n}})\leq d(x_n, x)+d(x,a)+d(a,a_{n})\leq \delta/3+\delta/3+\delta/3=\delta
$$
and
\begin{align*}
d'(f_{n}(x_n),f(x))\leq &d'(f_{n}(x_n),f_{n}(a_{n}))+d'(f_{n}(a_{n}),f(a))+d'(f(a),f(x))\\
\leq&\epsilon/3+\epsilon/3+\epsilon/3=\epsilon.
\end{align*}
\end{proof}


\section{Transboundary extremal lengths and a duality theorem}

The transboundary extremal length introduced by O. Schramm (\cite{schramm})  is a powerful conformal invariant and has been used in many works (see \cite{bonk1}, \cite{bonk2} and others).

Suppose  $\Sigma$ is a Riemann surface homeomorphic to an annulus and $\mathcal F$ and $\mathcal F^*$ are two families of curves in $\Sigma$ such that $\mathcal F$ consists of closed curves separating two ends of $\Sigma$ and $\mathcal F^*$ consisting of paths joining different ends of $\Sigma$. Then a well-known duality theorem states that the extremal lengths satisfy $EL(\mathcal F) EL(\mathcal F^*)=1$. The goal of this section is to show that the duality theorem still holds for transboundary extremal lengths. The latter result (Theorem \ref{4373}) is the key tool for estimating the modules of rings on non-smooth convex surfaces.

\subsection{Transboundary extremal lengths}

Suppose $\Sigma$ is a Riemann surface and $E$ is a set of ends of $\Sigma$  ($E$ may not be the set of all ends). Note that $\Sigma \cup E$ is naturally a topological space with the end topology.  Schramm's transboundary extremal length for any family of curves in $\Sigma \cup E$ is defined as follows.
Take a conformal Riemannian metric $g$ on $\Sigma$.
An \emph{extended metric} $m$ on $\Sigma \cup E$ is a pair
$(\rho g,\mu)$ such that $\rho : \Sigma \to \R_{\geq 0}$ is a Borel measurable function and $\mu:  E \to \R_{\geq 0}$.
The \it area \rm of the extended metric $m$ is defined to be
$$
A(m)=\int_{\Sigma} \rho^2 dA_g + \sum_{a\in  E} \mu(a)^2,
$$ where $dA_g$ is the area form of the Riemannian metric $g$.
By a curve in $\Sigma \cup E $ we mean a continuous map  $\gamma$ from an  interval to $\Sigma \cup E$. The \emph{length} of $\gamma$ in the extended metric $m$ is defined to be
$$ l_m(\gamma) = \int_{\Sigma \cap\gamma} \rho ds +\sum_{ a\in  E \cap \gamma } \mu(a),$$
where $ds$ is the length element in the metric $g$.
  If $\Gamma$ is a family of curves in $\Sigma \cup E$, its length in the extended metric $m$ is defined to be $$l_m(\Gamma) =\inf\{ l_m(\gamma) : \gamma \in \Gamma\}.$$
  Schramm's transboundary extremal length \cite{schramm} of $\Gamma$ is
  \begin{equation} \label{el}  EL(\Gamma)=
  \sup_{m}   \frac{l_m(\Gamma)^2}{A(m)}\end{equation}
   where the supremum is over all finite positive area extended metrics $m$.

   We will drop the adjective ``transboundary" when we refer to extremal lengths below.
  Some of the basic properties of extremal lengths follow from the definition (see \cite{ahlfors} for a proof).

  \begin{lemma} \label{71} (a)  Suppose $\Gamma_1$ and $\Gamma_2$ are two families of curves in $\Sigma \cup E$ such that for any $\gamma_1\in\Gamma_1$, there exists $\gamma_2\in\Gamma_2$ satisfying $\gamma_2\subset\gamma_1$. Then $EL(\Gamma_1)\geq EL(\Gamma_2)$. In particular, if $\Gamma_1 \subset \Gamma_2$, then $EL(\Gamma_1)\geq EL(\Gamma_2)$.

  (b) Suppose $\Gamma_1$ and $\Gamma_2$ are two families of curves in $\Sigma \cup E$ such that they are supported in two disjoint Borel measurable subsets $A_1,A_2$ of $\Sigma \cup E$, i.e., for any $\gamma\in\Gamma_1$, $\gamma\subset A_1$  and for any $\gamma\in\Gamma_2$, $\gamma\subset A_2$.
Then
   $$EL(\Gamma_1 \cup \Gamma_2)^{-1} \geq EL(\Gamma_1)^{-1} + EL(\Gamma_2)^{-1}.$$
\end{lemma}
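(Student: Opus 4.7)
The plan is to prove both inequalities directly from the definition of transboundary extremal length in (\ref{el}), following Schramm-style transplants of two classical extremal length facts: monotonicity for (a), and the ``parallel rule'' of reciprocal sub-additivity for (b). No earlier result from the paper is needed; the only inputs are the monotonicity of the length functional $l_m(\cdot)$ under inclusion of curves, and the obvious inequality $A(m)\geq A(m_1)+A(m_2)$ whenever the restrictions $m_i$ of $m$ are supported on disjoint Borel sets.

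For part (a), the observation is that for any extended metric $m=(\rho g,\mu)$ and any pair of curves $\gamma_2\subset\gamma_1$ one has $l_m(\gamma_2)\leq l_m(\gamma_1)$, because both the integral $\int\rho\, ds$ and the boundary sum $\sum\mu(a)$ collect only nonnegative contributions. The hypothesis then gives, for each $\gamma_1\in\Gamma_1$, some $\gamma_2\in\Gamma_2$ with $l_m(\Gamma_2)\leq l_m(\gamma_2)\leq l_m(\gamma_1)$; taking the infimum over $\Gamma_1$ yields $l_m(\Gamma_2)\leq l_m(\Gamma_1)$, and dividing by $A(m)$ and taking the supremum over admissible $m$ gives $EL(\Gamma_1)\geq EL(\Gamma_2)$. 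The inclusion statement is the special case $\gamma_2=\gamma_1$.

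For part (b), fix an arbitrary finite-area extended metric $m=(\rho g,\mu)$ on $\Sigma\cup E$ and define its restriction $m_i=(\rho_i g,\mu_i)$ to $A_i$ by $\rho_i=\rho\cdot\mathbf{1}_{A_i\cap\Sigma}$ and $\mu_i=\mu\cdot\mathbf{1}_{A_i\cap E}$. Disjointness of $A_1, A_2$ yields $A(m)\geq A(m_1)+A(m_2)$, while the condition $\gamma\subset A_i$ for $\gamma\in\Gamma_i$ forces $l_m(\gamma)=l_{m_i}(\gamma)$. Writing $L:=l_m(\Gamma_1\cup\Gamma_2)=\min\bigl(l_{m_1}(\Gamma_1),l_{m_2}(\Gamma_2)\bigr)\leq l_{m_i}(\Gamma_i)$ for $i=1,2$, the definition of $EL(\Gamma_i)$ applied to the test metric $m_i$ gives $L^2\leq EL(\Gamma_i)\,A(m_i)$, equivalently $A(m_i)\geq L^2/EL(\Gamma_i)$. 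Summing over $i$ and dividing by $A(m)$ produces
$$\frac{L^2}{A(m)}\ \leq\ \frac{1}{EL(\Gamma_1)^{-1}+EL(\Gamma_2)^{-1}},$$
and taking the supremum over $m$ yields the claimed inequality.

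I do not anticipate any real obstacle: both arguments reduce to one-line manipulations of the area and length functionals. The only point requiring a moment's care is the identity $l_m(\gamma)=l_{m_i}(\gamma)$ for $\gamma\in\Gamma_i$, which is immediate because $\gamma\subset A_i$ forces every endpoint contribution $a\in E\cap\gamma$ to sit in $A_i\cap E$, where $\mu$ and $\mu_i$ agree.
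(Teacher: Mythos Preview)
Your proof is correct. The paper does not actually prove this lemma; it only states that the properties follow from the definition and refers the reader to Ahlfors' book for the classical versions. Your argument is precisely the standard Ahlfors-style proof transplanted to the transboundary setting, so it matches in spirit what the paper intends.
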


Another property of extremal lengths is the following conformal invariance.

  \begin{lemma} [Schramm {\cite{schramm}} Lemma 1.1]\label{conf-inv} Suppose $\phi: \Sigma \to \Sigma'$  is a conformal diffeomorphism sending $E$ onto $E'$.  Then for any curve family $\Gamma$ in
$\Sigma \cup E$,
 $$EL(\Gamma) =EL(\phi(\Gamma)),$$
where $\phi(\Gamma)=\{\phi(\gamma):\gamma\in\Gamma\}$.
\end{lemma}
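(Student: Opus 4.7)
The plan is to construct an explicit bijection $m'\leftrightarrow m$ between the finite-area extended metrics on $\Sigma'\cup E'$ and on $\Sigma\cup E$ under which both the area and the length of any curve family correspond, so that the ratio in \eqref{el} is preserved. The first preparatory remark is that $EL(\Gamma)$ does not depend on which conformal background Riemannian metric $g$ is used in its definition: if $g_2=\tau^2 g_1$ for a positive function $\tau$, then the extended metric $(\rho g_1,\mu)$ equals $((\rho/\tau)g_2,\mu)$, while $dA_{g_2}=\tau^2\,dA_{g_1}$ and $ds_{g_2}=\tau\,ds_{g_1}$ combine to leave both $\int\rho^2\,dA_g$ and $\int\rho\,ds_g$ intact. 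Consequently I may choose $g$ on $\Sigma$ to be $\phi^*g'$, where $g'$ is any conformal Riemannian metric on $\Sigma'$.

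For an extended metric $m'=(\rho'g',\mu')$ on $\Sigma'\cup E'$, I set
$$
\rho=\rho'\circ\phi,\qquad \mu=\mu'\circ\phi|_E,\qquad m=(\rho g,\mu).
$$
Since $\phi$ is a diffeomorphism of surfaces and $\phi|_E:E\to E'$ is a bijection (being induced by a conformal equivalence, which extends bijectively to the ends), the assignment $m'\mapsto m$ is a bijection between the two spaces of finite-area extended metrics, with Borel measurability of $\rho$ being automatic. The change-of-variables formula applied to $\phi:(\Sigma,\phi^*g')\to(\Sigma',g')$ yields $\int_\Sigma(\rho'\circ\phi)^2\,dA_{\phi^*g'}=\int_{\Sigma'}(\rho')^2\,dA_{g'}$, and relabeling $a\mapsto\phi(a)$ in the end sum gives $\sum_{a\in E}\mu(a)^2=\sum_{b\in E'}\mu'(b)^2$, so $A(m)=A(m')$.

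For a curve $\gamma$ in $\Sigma\cup E$, the image $\phi(\gamma)$ is a curve in $\Sigma'\cup E'$, with $\phi$ matching interior arc segments to interior arc segments and visits to ends to visits to ends. The identical change-of-variables computation applied to line integrals, together with relabeling of the end sum, gives $l_m(\gamma)=l_{m'}(\phi(\gamma))$. Taking infimum over $\gamma\in\Gamma$ yields $l_m(\Gamma)=l_{m'}(\phi(\Gamma))$, and then
$$
\frac{l_m(\Gamma)^2}{A(m)}=\frac{l_{m'}(\phi(\Gamma))^2}{A(m')}.
$$
Since $m'\mapsto m$ is a bijection, supping over $m$ on the left corresponds to supping over $m'$ on the right, producing $EL(\Gamma)=EL(\phi(\Gamma))$.

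I do not anticipate a serious obstacle: the content of the lemma is that the defining ratio in \eqref{el} is intrinsic to the pair consisting of the Riemann surface with marked ends and the curve family, and the proof is essentially careful bookkeeping of how an extended metric pulls back under a conformal diffeomorphism that bijects the marked end sets. The only two points requiring a line of justification are independence of the background metric $g$ (handled in the first paragraph above) and the bijectivity of the induced map on ends (which follows because $\phi$ is a homeomorphism onto its image and hence induces a homeomorphism of end compactifications restricting to the given bijection between $E$ and $E'$).
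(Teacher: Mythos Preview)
Your proof is correct and is the standard argument for conformal invariance of transboundary extremal length. The paper itself does not supply a proof of this lemma; it merely states the result and cites Schramm \cite{schramm}, Lemma 1.1, so there is nothing to compare against beyond noting that your argument is essentially the one Schramm gives.
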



We will apply transboundary extremal lengths in the following special situation in this paper.  Take a topological surface $S$ and a compact set $X \subset S$ such that  $\Sigma =S -X$ is connected and is equipped with a complex structure.  Then  each component $X_i$ of $X$ corresponds to an end, denoted by $[X_i]$ of $\Sigma$. Define  $[X]$ to be the set of ends of the form $[X_i]$ for components $X_i$ of $X$. We will apply transboundary extremal lengths to curves in the space $\Sigma \cup [X]$ which will be denoted by $S^X$.

\subsection{A duality theorem}
 A \it flat cylinder \rm is a  Riemannian surface isometric to $S=\mathbf S^1 \times (0, h)$ equipped with the product metric $g=dx^2+dy^2$ where $(e^{\sqrt{-1}x}, y)$ are points in $S$. 
  A \it square \rm in $S$ is a compact subset of the form $I_1 \times I_2$ where $I_1$ and $I_2$ are two closed intervals of the same length. We consider a point in $S$ as a (degenerated) square.  
The following lemma is known to Schramm \cite{schramm1}.

\begin{lemma} \label{sqcase} Suppose $X$ is a finite disjoint union of squares in a flat cylinder $S$.
Let $\Gamma^*$ be the family of curves in $S^X$ joining the two boundary components of $S$ and $\Gamma$ be the family of all simple loops in $S^X$ separating the two boundary components of $S$.  Then $$EL(\Gamma) EL(\Gamma^*)=1.$$ \end{lemma}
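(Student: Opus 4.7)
The plan is to compute both transboundary extremal lengths exactly, confirming that the classical annulus values survive after replacing the squares by transboundary ends: I will show $EL(\Gamma)=2\pi/h$ and $EL(\Gamma^*)=h/(2\pi)$, where $h$ is the height and $2\pi$ the circumference of $S$. The extremal extended metric in both cases is the natural one: the flat Riemannian metric $g$ on $\Sigma:=S\setminus X$ together with atomic masses $\mu([X_i])=s_i$ at each end, where $s_i$ is the side length of the square $X_i$. Its total area is $\mathrm{Area}(\Sigma)+\sum_i s_i^2=\mathrm{Area}(S)=2\pi h$, i.e., the area of the full uncut cylinder.

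For the lower bound $EL(\Gamma^*)\geq h/(2\pi)$, I would show $l_m(\gamma)\geq h$ for every $\gamma\in\Gamma^*$ in the natural metric. The $y$-coordinate varies by at least $h$ along any such $\gamma$. Away from the ends, this variation is dominated by the flat Euclidean length, and each single visit of $\gamma$ to the end $[X_i]$ changes $y$ by at most the $y$-diameter $s_i$ of $X_i$, precisely offset by the added mass $\mu([X_i])=s_i$. A preliminary shortcutting reduction lets one assume each end is visited at most once: if $\gamma$ equals $[X_i]$ at two parameter values $t_1<t_2$, replace $\gamma|_{[t_1,t_2]}$ by the constant map to $[X_i]$; this remains continuous in the end topology of $S^X$, stays in $\Gamma^*$, and does not increase $l_m(\gamma)$. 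Interchanging the roles of $x$ and $y$ yields $EL(\Gamma)\geq 2\pi/h$, so the product is already $\geq 1$.

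For the matching upper bound I would use a Fubini/Cauchy--Schwarz argument adapted to the transboundary setting. Given any finite-area extended metric $m=(\rho g,\mu)$ on $S^X$, for each $x\in\SS^1$ let $\gamma_x\in\Gamma^*$ denote the vertical line $\{x\}\times(0,h)$ with each passage through a square $X_i$ collapsed to a single visit of $[X_i]$. Since the horizontal projection of $X_i$ is an interval of length $s_i$, Fubini gives
$$
2\pi\,l_m(\Gamma^*)\ \leq\ \int_{\SS^1}l_m(\gamma_x)\,dx\ =\ \int_\Sigma \rho\,dA+\sum_i s_i\,\mu([X_i])\ \leq\ \sqrt{A(m)\cdot 2\pi h},
$$
where the last step is Cauchy--Schwarz applied to the combined measure (Lebesgue area on $\Sigma$ plus atoms of mass $s_i^2$ at the ends), noting $\int_\Sigma 1\,dA+\sum_i s_i^2=2\pi h$. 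Hence $EL(\Gamma^*)\leq h/(2\pi)$; the horizontal analogue using the circles $\SS^1\times\{y\}\in\Gamma$ gives $EL(\Gamma)\leq 2\pi/h$. Together these yield $EL(\Gamma)\cdot EL(\Gamma^*)=1$.

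The main obstacle is the bookkeeping in the lower bound argument: because the transboundary length sums $\mu([X_i])$ over $[X_i]\in E\cap\gamma$ as a set (without multiplicity), one must perform the shortcutting reduction so that each end contributes $s_i$ exactly once while still absorbing all of the at-most-$s_i$ worth of $y$-variation through that end. One also needs to verify that the degenerate case of point squares (with $s_i=0$, hence zero area contribution and zero mass) is covered automatically. Beyond these two points, everything reduces to a routine application of Fubini and Cauchy--Schwarz.
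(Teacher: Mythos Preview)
Your proposal is correct and follows essentially the same approach as the paper: both compute $EL(\Gamma^*)=h/(2\pi)$ and $EL(\Gamma)=2\pi/h$ directly, using the natural extended metric $(g,\mu([X_i])=s_i)$ for the lower bound and the Fubini/Cauchy--Schwarz argument over vertical (resp.\ horizontal) slices for the upper bound. Your shortcutting reduction is in fact more careful than the paper's treatment of the lower bound, which simply asserts $l_m(\gamma)\geq h$ ``by definition'' without addressing multiple visits to an end.
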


\begin{proof}  We will show that $EL(\Gamma)=\frac{2\pi}{h}$ and $EL(\Gamma^*)=\frac{h}{2\pi}$. Since the computations are similar, we only compute $EL(\Gamma^*)$. Let the components of $X$ be $X_1$, ..., $X_n$ of edge lengths $h_1, ..., h_n $ with $h_i \geq 0$.   Let the coordinate in $S$ be  $(e^{\sqrt{-1}x}, y)$.  Construct an extended metric $m=(\rho (dx^2+dy^2), \mu)$ on $S^X $ such that $\rho=1$ on $S-X$  and $\mu([X_j])=h_j$. Then the area $A(m)$ of $m$ is  $2\pi h$. For any curve $\gamma \in \Gamma^*$, we have $l_m(\gamma) \geq h$ by definition. Therefore, $l_m(\Gamma^*) \geq h$. This shows, $EL(\Gamma^*) \geq l_m(\Gamma^*)^2/A(m) \geq \frac{h}{2\pi}$.  To see that $EL(\Gamma^*) \leq \frac{h}{2\pi}$, take any extended metric $m=(\rho (dx^2+dy^2), \mu)$ and for each $e^{\sqrt{-1} t} \in \mathbf S^1$, let $\gamma_t$ be the  line segment $\{e^{\sqrt{-1} t} \} \times (0, h)$ in $S$.

  Then
$$l_m(\Gamma^*) \leq l_m(\gamma_t) =\int_{(S-X)\cap\gamma_t} \rho(e^{\sqrt{-1} t}, y) dy + \sum_{ j: [X_j]  \in \gamma_t} h_j.$$
This shows,
$$2\pi l_m(\Gamma^*) \leq \int_{0}^{2\pi} l_m(\gamma_t) dt =\int_{S-X} \rho(e^{\sqrt{-1} t},y)dydt + \sum_j h^2_j.$$
By Cauchy inequality we have
$$4 \pi^2 l_m(\Gamma^*)^2 \leq (\int_{S-X} \rho^2(e^{\sqrt{-1} t},y) dydt + \sum_{j} h^2_j)( \int_{S-X} dydt+\sum_j h_j^2).
$$
$$=A(m) (2\pi h).$$ This shows $l_m(\Gamma^*)/A(m) \leq \frac{h}{2\pi}$ and the result follows.
\end{proof}

The main tool that enables us to estimate the module of rings in convex surfaces $\partial C(W)$ is the following theorem.  A version of it for quadrilaterals was proved by Schramm \cite{schramm1} (Theorem 10.1).  Recall that a doubly connected Riemann surface is a topological ring with a complex structure.

\begin{theorem}\label{4373} Suppose $R$ is doubly connected Riemann surface without boundary, and $X$ is a compact subset with finitely many components such that $R\backslash X$ is connected. Let $\Gamma^*$ be the family of curves in $R^X$ connecting the two topological ends of $R$ and $\Gamma$ be the family of all simple loops in $R^X$ separating the two topological ends of $R$.  Then the transboundary extremal lengths satisfy
\begin{equation} \label{dual}  EL(\Gamma) EL(\Gamma^*)=1.\end{equation}
\end{theorem}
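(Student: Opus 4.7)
My plan is to reduce Theorem~\ref{4373} to Schramm's quadrilateral version (Theorem~10.1 of \cite{schramm1}, cited in the paragraph immediately preceding the statement) by uniformizing $R$ to a flat cylinder and cutting along a transverse arc disjoint from $X$.

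\textbf{Step 1 (Uniformization to a flat cylinder).} By the classical uniformization of doubly connected Riemann surfaces, $R$ is conformally equivalent to one of $\mathbb{C}^*$, $\mathbb{D}^* = \{0 < |z| < 1\}$, or a finite flat cylinder $S = \mathbb{S}^1 \times (0, h)$ for some $h \in (0, \infty)$. In the infinite-modulus cases, the identity can be verified directly (with the convention $0\cdot\infty = 1$ coming from a dual blow-up/vanishing of the two extremal lengths). For the main case, Lemma~\ref{conf-inv} lets us assume $R = S$, with $X \subset S$ compact and having only finitely many components.

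\textbf{Step 2 (Cutting to a rectangle).} Because $X$ is compact with finitely many components, we can find a simple arc $\ell$ from the bottom circle $\mathbb{S}^1 \times \{0\}$ to the top circle $\mathbb{S}^1 \times \{h\}$ disjoint from $X$, unless $X$ separates the two boundary circles. The separating case is handled either by routing $\ell$ through one of the ends $[X_j]$, or by a limiting argument from small conformal perturbations of $X$ for which $\ell$ can be chosen freely. Cutting $S$ along $\ell$ yields a topological rectangle $\tilde{S}$ containing a compact subset $\tilde{X}$ with the same component structure as $X$.

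\textbf{Step 3 (Apply the quadrilateral duality and match extremal lengths).} Applying Schramm's Theorem~10.1 of \cite{schramm1} to $\tilde{S}^{\tilde{X}}$ gives $EL(\tilde{\Gamma}^*) \cdot EL(\tilde{\Gamma}) = 1$, where $\tilde{\Gamma}^*$ is the family of curves in $\tilde{S}^{\tilde{X}}$ joining the two horizontal sides (the lifts of the boundary circles of $S$) and $\tilde{\Gamma}$ is the family joining the two vertical sides (the two copies of $\ell$). To conclude, we match $EL(\tilde{\Gamma}^*) = EL(\Gamma^*)$ and $EL(\tilde{\Gamma}) = EL(\Gamma)$. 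The gluing map $\tilde{S} \to S$ descends $\tilde{\Gamma}^*$ and $\tilde{\Gamma}$ to subfamilies of $\Gamma^*$ and $\Gamma$ respectively; combined with Lemma~\ref{71}(a) this gives one inequality. For the reverse direction, the key observation is that a loop in $\Gamma$ generates $\pi_1(S) \cong \mathbb{Z}$ and can be homotoped to cross $\ell$ exactly once; for $\Gamma^*$, a periodization argument lifts any admissible extended metric on $S^X$ to the universal cover strip $\mathbb{R} \times (0, h)$ and restricts it to one fundamental domain $\tilde{S}$, yielding a competitor for $\tilde{\Gamma}^*$ of the same ratio $l_m(\Gamma)^2/A(m)$.

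\textbf{Main obstacle.} The principal technical difficulty lies in the matching $EL(\Gamma^*) = EL(\tilde{\Gamma}^*)$ in Step~3, since a curve in $\Gamma^*$ may cross $\ell$ many times and need not contain any sub-arc lying in $\tilde{\Gamma}^*$. Resolving this requires an averaging/periodization procedure showing that extremal (or near-extremal) admissible metrics on $S^X$ can be replaced, without loss of the ratio $l_m^2/A(m)$, by rotationally symmetric ones whose restriction to $\tilde{S}$ is admissible for $\tilde{\Gamma}^*$. The separating case in Step~2 is the secondary obstacle, requiring either a careful topological treatment of arcs passing through the collapsed ends or a continuity argument for transboundary extremal length under Hausdorff perturbations of $X$.
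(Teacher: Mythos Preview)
Your approach diverges from the paper's, and the gap you flag as the ``main obstacle'' is real and not closed by your sketch.

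The paper's proof is much shorter and avoids Schramm's quadrilateral theorem altogether. It invokes Jenkins' canonical mapping theorem (the corollary of Theorem~2 in \cite{jenkins}), which produces a flat cylinder $R'$ and a finite disjoint union of squares $X' \subset R'$ together with a conformal map $R - X \to R' - X'$ inducing a bijection on ends. Conformal invariance (Lemma~\ref{conf-inv}) then reduces the statement to Lemma~\ref{sqcase}, where both $EL(\Gamma) = 2\pi/h$ and $EL(\Gamma^*) = h/2\pi$ are computed by hand. No cutting to a quadrilateral and no matching step are needed; Jenkins' theorem does the hard work of putting $X$ itself into a tractable (axis-aligned) position, which is exactly what makes the explicit computation possible.

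Your Step~3 does not go through. The inclusion $\tilde{\Gamma}^* \subset \Gamma^*$ gives only $EL(\tilde{\Gamma}^*) \geq EL(\Gamma^*)$, and the reverse inequality is precisely what would require a new idea. Your proposed fix---replacing a near-extremal metric on $S^X$ by a rotationally symmetric one---cannot work as stated, because $X$ itself is not rotationally invariant and an extended metric must carry weights $\mu([X_i])$ on fixed, asymmetric ends; no rotational averaging respects this structure. Lifting to the universal cover strip does not help either: a curve $\gamma \in \Gamma^*$ that winds around the cylinder lifts to an arc spanning several fundamental domains, and there is no mechanism to shorten it to one fundamental domain. The other matching $EL(\Gamma) = EL(\tilde{\Gamma})$ also has a subtlety you pass over: glued arcs from $\tilde{\Gamma}$ yield essential but not a priori \emph{simple} loops in $S^X$, so Lemma~\ref{71}(a) does not apply directly in that direction. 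The upshot is that your two available inequalities, $EL(\Gamma) \geq EL(\tilde{\Gamma})$ and $EL(\tilde{\Gamma}^*) \geq EL(\Gamma^*)$, point in opposite directions and do not combine with $EL(\tilde{\Gamma})\, EL(\tilde{\Gamma}^*) = 1$ to yield either half of \eqref{dual}.
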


\begin{proof}  By Lemmas \ref{conf-inv} and \ref{sqcase},   it suffices to prove that there exists 
a finite collection of disjoint squares $X'$ in a flat cylinder $R'$ such that  (1) $R'-X'$ is conformal to  $R-X$ by a conformal map $\phi$ and (2) $\phi: [X'] \to [X]$ is a bijection. 
The latter result was established by Jenkins (the corollary of Theorem 2 in \cite{jenkins}).
\end{proof}


\subsection{Extremal length estimate on planar regions}

The following is a quantified version of a result of Schramm on transboundary extremal length of curves in cofat domains.

\begin{proposition} \label{8.2} Let  $S$ be  a topological annulus in $\hat{\C} -\{0\}$,
$W\subset S$ be a finite disjoint union of round closed disks and points and $\Gamma$ be the family of all simple loops in $S^W$ separating the two ends of $S$.  If $S$ contains $N$ disjoint rings $R_{i} =\{ r_i <|z|< 2r_i\}$ for $i=1,2,..., N$ such that

(a) each ring $R_i$ separates the two ends of $S$ and

(b) no component in $W$ intersects two $R_i$ and $R_j$,

then
\begin{equation}\label{72eq} EL(\Gamma) \leq \frac{216\pi}{N}. \end{equation}


\end{proposition}

\begin{proof}
We begin with the case of $N=1$.

\begin{lemma} \label{690} Assume that $S$, $W$ and $\Gamma$ are as in Proposition \ref{8.2}. If $S$ contains one annulus $\Omega_r:=\{r < |z| < 2r\}$ separating two ends of $S$, then
\begin{equation}\label{72eq1} EL(\Gamma) \leq 72\pi. \end{equation}
\end{lemma}
\begin{proof}
Let $\Gamma^*$ be the family of all simple paths in $S^W$ joining two boundary components of $S$. Then by Theorem \ref{4373}, we have $EL(\Gamma)^{-1}=EL(\Gamma^*)$.  Thus it suffices to show  $EL(\Gamma^*) \geq \frac{1}{72 \pi}$.

Suppose $\{W_1, ..., W_n\}$ is the set of all components of $W$. Consider the extended metric $m=(\rho |dz|, \lambda)$ on $S^W$ where $\lambda([W_i])=diam(W_i \cap \Omega_r)$ is the diameter in the Euclidean metric and $\rho: S-W \to \R $ is the function which is $1$ on $\Omega_r-W$ and zero otherwise.
Let $B_{2r}$ and $B_{6r}$ be the Euclidean balls of radii $2r$ and $6r$ centered at $0$ and $\mu$ be the Lebesgue measure on $\C$.
We have  \begin{equation}\label{7851} diam(W_i \cap \Omega_r)^2 \leq 2 \mu(W_i \cap  B_{6r}). \end{equation} 
Clearly $diam(W_i \cap \Omega_r) \leq diam(W_i \cap B_{2r}).$   If  $W_i \cap B_{2r} =\emptyset$ or
 $W_i \subset B_{6r}$, then either $diam(W_i \cap B_{2r})^2 =0$  or
 $diam(W_i \cap B_{2r})^2 \leq diam(W_i)^2 \leq 2 \mu(W_i) =2 \mu(W_i \cap B_{6r}).$ Hence the result follows.  If $W_i$ is not inside $B_{6r}$ and $W_i$ intersects $B_{2r}$, then the diameter of $W_i$ is at least $4r$ and $W_i \cap B_{6r}$  contains a disk of radius $2r$.  Then  $diam(W_i\cap B_{2r})^2 \leq diam(B_{2r})^2=16r^2 \leq 2 \mu(W_i \cap B_{6r})$ and (\ref{7851}) holds again.

By  (\ref{7851}), the area satisfies $$A(m)=\int_{\Omega_r-W} dxdy +\sum_{i=1}^n (\lambda([W_i]))^2
\leq \mu(B_{6r}-W) + \sum_{i=1}^n (\lambda([W_i]))^2 $$ $$  \leq 2\mu(B_{6r}-W) + 2 \sum_{i=1}^n \mu(W_i \cap B_{6r}) \leq 2 \mu(B_{6r}).$$
This shows
$$ A(m) \leq 72\pi r^2.$$

For each  path $\gamma$ in $\Gamma^*$ joining two boundary components of $S$, we claim that $l_m(\gamma) =\int_{\gamma-W} ds +\sum_{[W_i] \in \gamma } \lambda([W_i]) \geq r$.  To see this, let $W'$ be the set of all components $W_i$
of $W$ which intersect $\Omega_r$.   Construct a path $\tilde{\gamma}$  in $S^{W-W'}$ by gluing to $\gamma \cap (S-W')$ a line segment of length at most $\lambda([W_i])$ inside $W_i$ for which $[W_i] \in \gamma \cap [W']$.
By the construction, the length of $\gamma$ is at least the length of $\tilde{\gamma}$.
Since $\tilde \gamma$ is a path $S^{W-W'}$ joining two ends of $S$, it contains an arc $\alpha \subset \Omega_r$ joining  $\{|z|=r\}$ to $\{|z|=2r\}$. In particular, the Euclidean length of $\alpha$ is at least $r$. This shows the length of $\tilde{\gamma}$ is at least $r$. 
Therefore $l_m(\gamma) \geq r$ and $l_m(\Gamma) \geq {r}$. This shows
 $$ EL(\Gamma^*) \geq \frac{ l_m(\Gamma^*)^2}{A(m)} \geq \frac{1}{72\pi}.$$
\end{proof}

Now back to the proof of
(\ref{72eq}). By Lemma \ref{690}, we may assume that $N \geq 4$. Without loss of generality, we may assume that $2r_j \leq r_{j-1}$ for $j=2,..., N$. For $j=2,3,..., N-1$, let $A_j$ be the annulus in $S$ defined by $\{ 2r_{j+1} < |z| < r_{j-1}\} -\cup_{k\in I_j} W_k$ where the index set $I_j$ is $\{k| W_k \cap (\{|z|=r_{j-1} \}\cup \{|z|=2r_{j+1}\}) \neq \emptyset\}$. Note that $A_j$ is topologically an annulus since each component of $W$ is a disk or a point and no component of $W$ intersects two rings
$R_i$ and $R_l$. Furthermore, by construction $Z_j:=W \cap A_j \subset A_j$ is the union of all components of $W$ which lie in $A_j$. See Figure \ref{777}.
Let $\Gamma_j$ be the set of simple loops in $A_j^{Z_j}$ separating the two ends of $A_j$.  By construction,  $A_j^{Z_j} \subset S^W$ and $\Gamma_j$ is a subset of $\Gamma$. Furthermore, since no component of $W$ intersect two rings $R_i$ and $R_h$, we see that $A_{j} \cap A_{j+2} =\emptyset$.
Then $\Gamma_2$, $\Gamma_4$, $\Gamma_6$, ..., $ \Gamma_{2[N/2]}$ are in $\Gamma$ and lie in disjoint annuli $A_2^{Z_2}$, $A_4^{Z_4}$,  $A_6^{Z_6}, ..., A_{2[N/2]}^{Z_{2[N/2]}}$ respectively.
By Lemma \ref{690} for $S=A_{2j}$ and $W=Z_{2j}$, $EL(\Gamma_{2j}) \leq 72\pi$, i.e., $EL(\Gamma_{2j})^{-1}\geq \frac{1}{72\pi}$.  By Lemma \ref{71} and $N \geq 4$,
$EL(\Gamma)^{-1} \geq EL(\Gamma_2 \cup \Gamma_4 \cup ... \cup \Gamma_{2[N/2]})^{-1} \geq \sum_{i=1}^{[N/2]} EL(\Gamma_{2i})^{-1} \geq \frac{[N/2]}{72\pi} \geq \frac{N}{216\pi}$.  Combining these, we see inequality (\ref{72eq}) follows.

\begin{figure}[ht!]
\begin{center}
\begin{tabular}{c}
\includegraphics[width=0.7\textwidth]{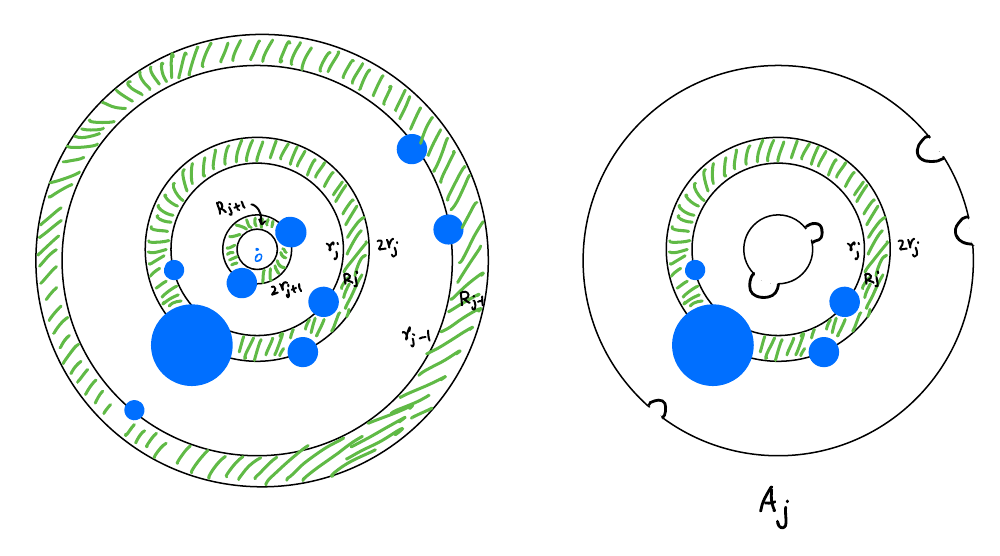}
\end{tabular}
\end{center}
\caption{The rings $A_j$'s.} \label{777}
\end{figure}
\end{proof}

\subsection{Extremal length estimate on convex surfaces $\partial C(Z)$ in the Poincar\'e model $\H^3$}


The counterpart of Proposition \ref{8.2} for non-smooth convex surfaces will be established in this section.  Given a compact set $Z \subset \partial \H^3$,  by the work of Thurston, 
the surface $\partial C_P(Z)$ is hyperbolic and therefore naturally a Riemann surface whose conformal structure is induced by the path metric $d^P_S$. 

Let $W$ be a finite disjoint union of round disks and points in $\mathbf S^2$ such that $(0,0,0) \in \partial C_P(W)$ and $C_P(W)$ has a non-empty interior.  We also fix a point $q \in W$. The goal is to estimate the modules of rings separating $q$ and $(0,0,0)$ in the convex surface $\partial C_P(W)\cup W$.  Let  $D_r=\{z \in \mathbf S^2|  d^{\mathbf S}(z, q) < r\}$ be the ball of radius $r$ at $q$ and $E_r=\{ z\in \mathbf S^2| d^{\mathbf S}(z,q)\geq r\}$ be the complement of $int(D_r)$. The disk $D_r$ is convex if $r < \pi/2$.  
The module of the ring $\{ z \in \mathbf S^2|  a< d^{\mathbf S}(z, q) < b\}$ is $\frac{1}{2\pi} \ln\left(\frac{\tan (b/2)}{\tan(a/2)}\right)$.
To see this, we may assume that $z$ is the south pole of $\SS^2$ and then the stereographic projection conformally maps the ring to a Euclidean ring $\{z\in\C:\tan\frac{a}{2}<|z|<\tan\frac{b}{2}\}$ whose module is well-known to be
$\frac{1}{2\pi} \ln\left(\frac{\tan (b/2)}{\tan(a/2)}\right)$ (see page 53 in \cite{ahlfors0}).
So for small $r>0$, the module of the ring $\Omega_{ r}:=\{ z \in \mathbf S^2|  r< d^{\mathbf S}(z, q) < 65r\} $ is 
$$
\frac{1}{2\pi} \ln\left(\frac{\tan (65r/2)}{\tan(r/2)}\right)\approx\frac{\ln(65)}{2\pi}+O(r^2)
$$ 
which is uniformly bounded away from $0$ and infinity.   The goal is to show that the image of $\Omega_r$ under the shortest distance projection is a ring whose module is uniformly bounded away from zero.

Let $\pi: \mathbf S^2 \to \partial C_P(W)\cup W$ be the shortest distance projection onto the convex hull. Note that $\pi |_W = id$ and $\pi$ is in general not injective.  We claim that for the ring $\Omega_r=D_{65r}-E_r$ in $\mathbf S^2$, there is a well-defined ring in $\partial C_P(W)\cup W$ corresponding to it under the projection $\pi$.
Indeed, by Propositions \ref{homeo} and \ref{3.7}, we see that 
$\pi(\overline{D_r})$ and $\pi(E_{65r})$ are disjoint connected compact sets in the topological 2-sphere $\partial C_P(W)\cup W$. By a well-known fact from surface topology, there exists a unique component of $\partial C_P(W) \cup W -\pi(\overline{D_r}) \cup \pi(E_{65r})$ which is a topological annulus separating 
$\pi(\overline{D_r})$ and $\pi(E_{65r})$. (All other components, if they exist,  are simply connected).  We will denote this ring component by $\hat{\pi}(\Omega_{ r})$.  Since the projection $\pi$ is onto, we see that $\pi(\Omega_r)$ contains $\hat{\pi}(\Omega_r)$. 

\begin{proposition} \label{6571}
Let $W$ be a finite disjoint union of round disks and points in $\mathbf S^2$ such that $(0,0,0) \in \partial C_P(W)$ and $C_P(W)$ has a non-empty interior. Fix a point $q\in W$. Suppose $W'$ is a finite union of connected components of $W$ and $S$ is an annulus in $\partial C_P(W)  \cup W -\{q\}$ such that $S$ contains $W'$. Let $\Gamma$ be the set of all simple loops in $S^{W'}$ separating the two ends of $S$.  If $S$ contains $N$ pairwise disjoint rings $\hat{\pi}(\Omega_{r_i}) =\hat{\pi}(\{ z \in \mathbf S^2 | r_i < d^{\mathbf S}(z, q) < 65 r_i\})$ for $i=1,2,..., N$ such that

(a) each $\hat{\pi}(\Omega_{r_i})$ separates the two ends of $S$, 

(b) no component of $W$ intersects two $\pi(\Omega_{r_i})$ and 
$\pi(\Omega_{r_j})$, and

(c) $65r_i<\pi/2$ for all $i=1,2,...,N$,\\
then $$ EL(\Gamma) \leq 3\frac{ 10^8}{N}.$$
\end{proposition}

 The key step in the proof is to establish the counterpart of Lemma \ref{690}.  The rest will be the same as the argument used in the proof of Proposition \ref{8.2}.





\begin{figure}[ht!]
\begin{center}
\begin{tabular}{c}
\includegraphics[width=0.35\textwidth]{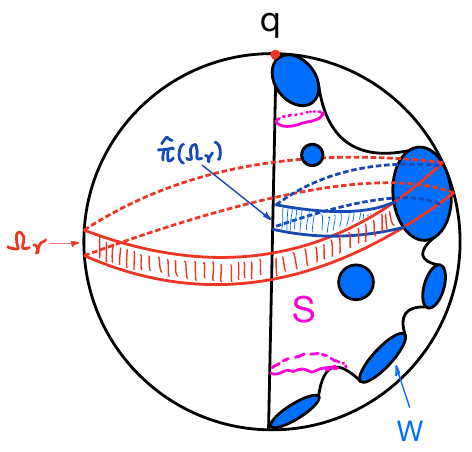}
\end{tabular}
\end{center}
\caption{The annulus surface $S$ in $\partial C_P(W) \cup W$.} \label{92}
\end{figure}

\begin{proof} 
We begin with the case of $N=1$. 

\begin{lemma} \label{67991} Assume that $W$, $W'$, $S$ and $\Gamma$ are as in Proposition \ref{6571}. If $S$ contains a ring $\hat{\pi}( \Omega_{r})$ separating two ends of $S$, then 
$$EL(\Gamma) \leq 10^8.$$
\end{lemma}


\begin{proof}
Let $\Gamma^*$ be the family of paths in $S^{W'}$ joining two boundary components of $S$. By Theorem \ref{4373}, we have $EL(\Gamma)^{-1}=EL(\Gamma^*)$. Hence it suffices to prove
$EL(\Gamma^*) \geq \frac{1}{10^8}$.  To this end,
consider the extended metric $m=(\rho d^E_{S}, \lambda)$ on $S^{W'}$ where $\rho(z)=1$ for $z \in \pi(D_{65r})\cap S -W'$ and is zero otherwise,  and for each component $W_i$ of $W'$,  $\lambda([W_i])=diam(W_i \cap D_{65r})$ is the spherical diameter of $W_i\cap D_{65r}$ in $(\mathbf S^2,d^{\mathbf S})$. 
Recall that $\mu(S)$ denotes the Euclidean area, or the 2-dim Hausdorff measure, of a set $S$ in $\R^3$.
We claim that  \begin{equation}\label{9124} \lambda([W_i])^2 = diam(W_i \cap D_{65r})^2 \leq 4\pi \mu(W_i \cap  D_{195r}).
\end{equation} 
To see this, we will use the well-known fact that for a spherical ball $B_t$ of radius $t < \pi$, $ diam(B_t)^2 \leq 4\pi \mu(B_t)$ and $\mu(B_t) =4\pi\sin^2(t/2) \leq \pi t^2.$
Now if $W_i \cap D_{65r} =\emptyset$ or
 $W_i \subset D_{195r}$, then either $diam(W_i \cap D_{65r})=0$ or $diam(W_i \cap D_{65r})^2   \leq diam(W_i)^2 \leq 4\pi \mu(W_i) = 4\pi \mu(W_i \cap D_{195r}).$  Thus (\ref{9124}) holds.   If $W_i$ is not inside $D_{195r}$ and $W_i$ intersects $D_{65r}$, then $W_i \cap D_{195r}$  contains a spherical ball  of radius $65r$.  Hence  $diam(W_i\cap D_{65r})^2 \leq diam(D_{65r})^2 \leq    4\pi \mu(D_{65r}) \leq  4\pi \mu(W_i \cap D_{195r})$ and (\ref{9124}) holds again.

The area $A(m)=\int_{\pi(D_{65r})\cap (S-W')} d\mu +\sum_{i=1}^n (\lambda([W_i]))^2 \leq \mu(\pi(D_{65r})-W')+\sum_{i=1}^n (\lambda([W_i]))^2$. By (\ref{lip2}) that $\pi$ is 2-Lipschitz,  $\pi(D_{65r})-W'=\pi(D_{65r}-W')$ and (\ref{9124}), we have
$$
A(m)\leq 4\mu(D_{65r}-W') +4\pi \sum_{i=1}^n \mu(W_i\cap D_{195r}) $$
$$
\leq 
4\pi \mu(D_{195r}-W') +4\pi \sum_{i=1}^n \mu(W_i\cap D_{195r}) $$
$$
\leq
4 \pi \mu(D_{195r})\leq   4\pi\cdot\pi195^2 r^2. $$
For each path $\gamma$ in $\Gamma^*$ joining the two boundary components of $S^{W'}$,  let $\tilde \gamma$ be the path on $S$ obtained by gluing to $\gamma \cap (S-W')$ the shortest geodesic path in each component $W_i$ of $W'$ such that $[W_i] \in \gamma \cap [W']$ and $W_i \cap D_{65r} \neq \emptyset$. 
By the separation assumption, the path $\tilde \gamma$ contains an arc joining a point $p_1 =\pi(p_1')$ with $p_1'\in  D_r$ to another point $p_2 =\pi(p_2')$ with $p_2' \in E_{65r}$.   By Proposition \ref{3.7},  
$$ |q-p_1| \leq 8 d^{\mathbf S}(q, p_1') \leq 8r$$ and
$$ |q-p_2| \geq \frac{ d^{\mathbf S}(q, p_2')}{8} \geq \frac{65r}{8}.$$
In particular, $|p_1-p_2| \geq |q-p_2| -|q-p_1| \geq \frac{r}{8}$. 
Now by construction,  $l_m(\gamma)
= \int_{\tilde \gamma-W'} ds +\sum_{W_i\cap  \tilde \gamma \cap D_{65r} \neq \emptyset } \lambda([W_i]) \geq l_m(\tilde \gamma)  $. The later is at least $|p_1-p_2| \geq \frac{r}{8}$ since it contains $p_1$ and $p_2$. 
Therefore, we have
$l_m(\gamma) \geq \frac{r}{8}$  for all $\gamma \in \Gamma^*$ and hence $l_m(\Gamma^*) \geq  \frac{r}{8}.$
This implies $$ EL(\Gamma^*) \geq \frac{ l_m(\Gamma^*)^2}{A(m)} \geq \frac{r^2/8^2}{4\pi\cdot\pi195^2r^2} \geq \frac{1}{10^8}. $$
\end{proof}

The rest of the proof of Proposition \ref{6571} is the same as that of Proposition \ref{8.2}.  Note that the counterpart of the annulus $A_j$ is the annulus component of the $S - \pi(D_{r_j+1}) \cup \pi(E_{r_{j-1}})\cup \cup_{k\in I_j} W_k$ where the index set $I_j$ is $\{ k | W_k \cap (\pi(D_{r_j+1}) \cup \pi(E_{r_{j-1}})) \neq \emptyset\}$. The verification that $A_{j}$ and $A_{j+2}$ are disjoint and there are no components of $W$ intersecting both $A_j$ and $A_{j+2}$ follows from the assumption on $W$. 
We omit the details.
\end{proof}

\section{Proof of part (a) of  Theorem \ref{1.222} }

Suppose $U=\hat{\C}-X$ is a circle domain such that $|X| \geq 3$  and $d^U$ is the Poincar\'e metric on $U$. The goal is to find a circle type closed set $Y \subset\mathbf S^2$ such that $(U, d_{U})$ is isometric to $\partial C_K(Y) \subset (\H^3, d^K)$ in the induced path metric from $d^K$.  Here $(\H^3, d^K)$ is the Klein model of the hyperbolic space, and $C_K(Y)$ is the convex hull in the Klein model.

Using a M\"obius transformation, we may assume that   $X \subset \{ z \in \C| 2 < |z| < 3\}$.  Let $W_1, ..., W_i, ...$ be a sequence of distinct connected components of $X$ such that $\cup_{i=1}^{\infty} W_i$ is dense in $X$. Each disk component of $X$ is in the sequence.
Let $X^{(n)}=\cup_{i=1}^n W_i^{(n)}$ where  $W^{(n)}_i=W_i$ if $W_i$ is a disk and $W^{(n)}_i=\{ z \in \C| d^E(z, W_i) \leq l_n\}$ if $W_i$ consists of a single point.  We make $l_n$ small such that $l_n$ decreases to $0$ and $W^{(n)}_i \cap W^{(n)}_j =\emptyset $ for $i \neq j$ and $W^{(n)}_i \subset \{ z \in \C| 2 < |z| < 3\}$ .  This shows that  $X^{(n)}$ is a circle-type closed set having finitely many connected components and
$X^{(n)}$ converges to $X$ in the Hausdorff distance in $\hat \C$.
 
We will use the following theorem of Schlenker \cite{sch1}. 

\begin{theorem}[Schlenker]\label{sh11} Suppose $g$ is a complete hyperbolic metric on a genus zero surface $\Sigma$ of finite topological type such that each end of $(\Sigma, g)$ is of funnel type. Then there exists a closed set $Y \subset \SS^2$ such that $Y$ is a disjoint union of finitely many round disks and $\partial C_K(Y)$ with the induced path metric from $d^K$ is isometric to $(\Sigma, g)$.    
\end{theorem}

Let $d_n$ be the Poincar\'e metric on $U_n=\hat \C- X^{(n)}$.
By the above theorem, there exists a circle type closed set $Y^{(n)} \subset\mathbf S^2$ with $(0,0,0) \in \partial C_K(Y^{(n)})$ and an isometry $\phi_n: (U_n , d_n) \to \partial C_K(Y^{(n)})$ such that $\phi_n(0)=(0,0,0)$.
By taking a subsequence if necessary, we may assume that $\{Y^{(n)}\}$ converges in Hausdorff distance to a compact set $Y$. 

A key step in the proof Theorem \ref{1.222} is to show the following equicontinuity property.

 \begin{theorem}\label{equicont}  The sequence $\{\phi_n: (U_n, d^{\SS}) \to (\partial C_K(Y^{(n)}), d^E)\}$ is equicontinuous, 
 i.e., for any $\epsilon>0$, there exists a $\delta>0$ such that for all $n$ and all $x,y \in U_n$ with $d^{\SS}(x,y) < \delta$,  $|\phi_n(x)-\phi_n(y)| < \epsilon$.
 \end{theorem}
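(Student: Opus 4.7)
The plan is to argue by contradiction, combining the transboundary extremal length with the cofat annulus bound Proposition \ref{8.2} on the source side, the area estimate Theorem \ref{area} on the target side, and the convergence results of Section 5. Suppose the family is not equicontinuous; extract $\epsilon_0 > 0$ and $x_n, y_n \in U_n$ with $d^{\mathbb S}(x_n, y_n) \to 0$ but $|\phi_n(x_n) - \phi_n(y_n)| \geq \epsilon_0$. Passing to subsequences, $x_n, y_n \to x_* \in \hat{\C}$, $\phi_n(x_n) \to p$, $\phi_n(y_n) \to q$ with $|p-q| \geq \epsilon_0$, $Y^{(n)} \to Y$ in Hausdorff distance, and $\partial C_K(Y^{(n)}) \to \partial C_K(Y)$ in the Alexandrov sense. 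The case $x_* \in U$ is easy: Theorem \ref{poincare} gives $d_n(x_n, y_n) \to 0$, hence the intrinsic Klein-hyperbolic distance between $\phi_n(x_n)$ and $\phi_n(y_n)$ tends to $0$; since $d_n(0, x_n) \to d^U(0, x_*) < \infty$, Theorem \ref{convergence} keeps $\phi_n(x_n)$ in a compact subset of $\B$ where intrinsic and ambient metrics are bi-Lipschitz equivalent, which yields $|\phi_n(x_n) - \phi_n(y_n)| \to 0$, contradicting the separation.

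The harder case $x_* \in X$ is handled as follows. Fix an integer $N$ to be chosen large, pick $c > 0$ with $c < d^{\mathbb S}(x_*, 0)/4$, and set $r_k = c \cdot 2^{-k}$ for $k=1,\ldots,N$. Consider the pairwise disjoint spherical annuli
\[
A^{(k)}_n = \{z \in \hat{\C} : r_k < d^{\mathbb S}(z, x_*) < 2r_k\}.
\]
For $n$ large enough each $A^{(k)}_n$ separates $\{x_n, y_n\}$ from $\{0\}$ in $\hat{\C}$. Since $X$ is a circle-type set and each component of $X^{(n)}$ is a round closed disk, after a Möbius transformation sending $x_* \mapsto 0$ the set $X^{(n)} \cap A^{(k)}_n$ satisfies the hypothesis of Proposition \ref{8.2}, yielding $EL(\Gamma^{(k)}_n) \leq 72\pi$, where $\Gamma^{(k)}_n$ is the family of loops in $(A^{(k)}_n)^{X^{(n)} \cap A^{(k)}_n}$ separating the two boundary circles of $A^{(k)}_n$. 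Because the $A^{(k)}_n$ have pairwise disjoint supports, iterating the inequality of Lemma \ref{71}(b) gives
\[
EL\Bigl(\bigcup_{k=1}^N \Gamma^{(k)}_n\Bigr) \leq \frac{72\pi}{N}.
\]

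By the conformal invariance Lemma \ref{conf-inv}, applicable because $\phi_n$ is an isometry of hyperbolic metrics and because on $\partial C_K(Y^{(n)})$ the conformal structure is determined by the Euclidean path metric via Reshetnyak's theorem recalled in the Appendix, the image family $\phi_n(\bigcup_k \Gamma^{(k)}_n)$ on the target has extremal length at most $72\pi/N$, each of its members being a loop separating $\{\phi_n(x_n), \phi_n(y_n)\}$ from $\{0\}$ on $\partial C_K(Y^{(n)})$. Equip the target with the extended metric $m_n = (d^E, \mu_n)$, where $\mu_n([Y^{(n)}_i])$ is the Euclidean diameter of $Y^{(n)}_i$; by Theorem \ref{area} transferred to the Klein model via the bi-Lipschitz equivalence of Proposition \ref{lip}, together with a packing bound on $\sum_i \mu_n([Y^{(n)}_i])^2$, the total area $A(m_n) \leq C_0$ is bounded uniformly in $n$. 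The extremal length bound produces a loop $\gamma^*_n$ in the image family with $l_{m_n}(\gamma^*_n) \leq \sqrt{72\pi C_0/N}\,(1+o(1))$; replacing each end-traversal by an actual short Euclidean path inside the corresponding component of $Y^{(n)}$, we obtain a closed curve $\tilde\gamma^*_n$ on $\overline{\partial C_K(Y^{(n)})}$ of Euclidean length at most $l_{m_n}(\gamma^*_n)$.

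The curve $\tilde\gamma^*_n$ divides the topological sphere $\overline{\partial C_K(Y^{(n)})}$ into two closed topological disks, one containing $\{\phi_n(x_n), \phi_n(y_n)\}$ and the other $\{0\}$. Since $\gamma^*_n \subset \phi_n(A^{(k)}_n)$ for some $k$, and the source annuli $A^{(k)}_n$ contract to $x_* \in X$ as $n \to \infty$, the image $\phi_n(A^{(k)}_n)$ lies in a small Euclidean neighborhood of $p \in Y \subset \partial \B$ for $n$ large (the intrinsic-to-ambient comparison of Lemma \ref{3.7} applied via the nearest point projection makes this quantitative); hence $0 \in \B$, being bounded away from $\partial \B$, lies on the large side of $\tilde\gamma^*_n$. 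Invoking the isoperimetric-type property on Euclidean convex surfaces in $\B^3$ that a simple loop of length $L$ bounds at least one region of Euclidean diameter $O(L)$, the smaller side, which contains $\{\phi_n(x_n), \phi_n(y_n)\}$, has Euclidean diameter at most $C \cdot l_{m_n}(\gamma^*_n)$ for a universal $C$. Choosing $N$ large so that $C\sqrt{72\pi C_0/N} < \epsilon_0$ produces the contradiction. The principal obstacle is this last isoperimetric-type inequality on the possibly non-smooth convex surface $\partial C_K(Y^{(n)})$, together with the careful argument, using that $0$ stays a uniform distance from $\partial \B$ while $\tilde\gamma^*_n$ approaches $\partial \B$, that the smaller side is indeed the one containing $\{\phi_n(x_n), \phi_n(y_n)\}$ rather than $\{0\}$.
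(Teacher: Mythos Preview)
Your overall architecture (contradiction via a mismatch of transboundary extremal lengths, with the $72\pi$ annulus bound on the source and the area estimate on the target) matches the paper's, but two steps as written do not go through.

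\textbf{Source side: applicability of Proposition \ref{8.2}.} Proposition \ref{8.2} requires $W\subset S$ to be a finite union of \emph{round closed disks and points} contained in the annulus $S$. With your fixed dyadic annuli $A^{(k)}$, components of $X^{(n)}$ can straddle $\partial A^{(k)}$, so $X^{(n)}\cap A^{(k)}$ is not a union of round disks and the proposition does not apply. More seriously, when the component $X_*$ of $X$ containing $x_*$ is a \emph{disk}, $X_*$ meets every small annulus around $x_*$ and can never be absorbed this way; your argument ignores this case entirely. The paper deals with both issues: Lemma 7.3 shows one can choose the radii $R_i$ (not dyadically, but adaptively) so that no component of $X^{(n)}-X_*$ crosses two consecutive circles, then builds $A_i'=\hat\C-V_i-V_i'$ by absorbing the straddling components into the complementary disks $V_i,V_i'$, so that $W_i=(X^{(n)}-X_*)\cap A_i'$ \emph{is} a union of whole round disks. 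When $X_*$ is a disk the paper replaces loops by arcs ending at $[X_*]$ and uses a separate annulus $A_i''=\mathrm{int}(A_i'\cup X_*)$.

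\textbf{Target side: circularity.} You separate $\{\phi_n(x_n),\phi_n(y_n)\}$ from the single point $\{0\}$, then need to argue the short loop $\tilde\gamma^*_n$ has $\{\phi_n(x_n),\phi_n(y_n)\}$ on its \emph{small} side. Your justification is that $\phi_n(A^{(k)})$ lies near $\partial\B$, but that is precisely a form of the equicontinuity being proved (and your annuli $A^{(k)}$ have fixed radii, so they do not ``contract to $x_*$ as $n\to\infty$''). The paper avoids this by separating $\{\phi_n(x_n),\phi_n(x_n')\}$ from the \emph{set} $\phi_n(B_{1/2}(0))$, whose Euclidean diameter is bounded below independently of $n$ by Lemma \ref{3pts}(c). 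Then \emph{both} complementary regions of any separating loop have diameter bounded below by a fixed positive constant; passing to the limit, a loop of length tending to $0$ forces the Hausdorff limit $C_K(Y)$ to be one-dimensional, contradicting $|Y|\geq 3$ from Lemma \ref{3pts}(b). No isoperimetric inequality on the non-smooth convex surface is needed, and no knowledge of where $\tilde\gamma^*_n$ sits is required.
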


This section is organized as follows.  In \S7.1, we prove that  $Y$ contains at least three points and is not equal to $\mathbf S^2$. In \S7.2, we prove Theorem \ref{equicont}.  In \S7.3, we show, using Theorem \ref{equicont},  that $\phi_n$'s can be extended to continuous functions, still denoted by $\phi_n$ from  $(\hat{\C}, d^{\mathbf S})$ to $(\partial C_K(Y^{(n)}) \cup Y^{(n)}, d^E)$ and the extended family $\{\phi_n\}$ remains equicontinuous. Finally, in \S7.4,
 we prove that each component of $Y$ is the Hausdorff limit of components of $Y^{(n)}$. Therefore $Y$ is a circle type closed set. On the other hand, using Theorems \ref{convergence}, \ref{poincare} and \ref{aa}, we know that $\phi_n$ converges uniformly on compact subsets to an isometry from $(U, d^U)$ and $\partial C_P(Y)$.  This ends the proof Theorem \ref{1.222}(a).

\subsection{The Hausdorff limit $Y$ of  the sequence \{$Y^{(n)}$\} }

Recall that the injectivity radius at a point $p$ of a hyperbolic surface is the largest positive number $r$ such that the open ball $B_r(p)$  of radius $r$ centered at $p$ is isometric to the standard hyperbolic ball of radius $r$.  In particular, $B_r(p)$ simply connected.

\begin{lemma}\label{3pts} (a) There exists a positive lower bound on the injectivity radii of $(U_n, d_n)$ at $0$.

(b) The  closed set $Y$ contains at least three points and $Y \neq\mathbf S^2$.

(c)  There exists a positive lower bound on the Euclidean diameters of $\phi_n(\D)$.
\end{lemma}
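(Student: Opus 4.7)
The plan is to establish (a) via Schwarz-Pick plus Poincaré-metric convergence, use (a) to rule out the degenerations in (b), and then combine (a) and (b) with Alexandrov's convergence theorem to obtain (c). For (a), I would pick three distinct points $p_1, p_2, p_3 \in X$ (available since $|X| \geq 3$) and choose $p_i^{(n)} \in X^{(n)}$ with $p_i^{(n)} \to p_i$; for $n$ large these are pairwise distinct. Set $V_n := \hat{\C} - \{p_1^{(n)}, p_2^{(n)}, p_3^{(n)}\} \supset U_n$ and $V := \hat{\C} - \{p_1, p_2, p_3\}$. Schwarz-Pick (Lemma~\ref{332}) gives $d_n \geq d_{V_n}$ on $U_n$, and Theorem~\ref{poincare} gives $d_{V_n}(z_n, w_n) \to d_V(z, w)$ whenever $z_n \to z$ and $w_n \to w$ in $V$. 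Since $X^{(n)} \subset \{1 < |z| < 2\}$, the open unit disk $\D$ and its boundary $\partial \D$ lie inside $U_n$. The continuous positive function $z \mapsto d_V(0, z)$ attains its positive minimum $m$ on the compact circle $\partial \D$; extracting a minimizer of $d_{V_n}(0, \cdot)$ on $\partial \D$ and applying Theorem~\ref{poincare} along a convergent subsequence yields $d_{V_n}(0, \partial \D) \geq m/2$ for $n$ large. Since $\D$ is simply connected and contained in $U_n$, any non-contractible loop through $0$ in $U_n$ must cross $\partial \D$, so has $d_n$-length at least $2 d_n(0, \partial \D) \geq m$, giving injectivity radius at $0$ at least $m/2$.

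For (b), I would argue by contradiction using (a). If $|Y| \leq 1$, then $C_K(Y^{(n)})$ Hausdorff-converges to at most one point on $\SS^2$; the fixed point $(0,0,0) \in C_K(Y^{(n)})$ would be forced into $\SS^2$, which is impossible. For $|Y| = 2$, say $Y = \{y_1, y_2\}$, Hausdorff convergence $C_K(Y^{(n)}) \to \overline{y_1 y_2}$ places $(0,0,0)$ on the geodesic segment $\overline{y_1 y_2}$; let $\Pi$ be the Euclidean plane through $(0,0,0)$ perpendicular to this segment, and set $\gamma_n := \Pi \cap \partial C_K(Y^{(n)})$. This is a Jordan curve on $\partial C_K(Y^{(n)})$ through $(0,0,0)$, bounding the convex planar region $\Pi \cap C_K(Y^{(n)})$ whose Euclidean diameter tends to $0$. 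Since $\gamma_n$ separates the ends of $\partial C_K(Y^{(n)})$ near $y_1$ from those near $y_2$ (both non-empty for $n$ large), it is non-contractible; near the origin $d^K$ is bilipschitz to $d^E$, so the $d^K$-length of $\gamma_n$ tends to $0$. Pulling back by $\phi_n^{-1}$ produces an arbitrarily short non-contractible loop through $0$ in $(U_n, d_n)$, contradicting (a). If $Y = \SS^2$, then $Y^{(n)}$ becomes $\varepsilon$-dense on $\SS^2$ for every $\varepsilon > 0$, so for $n$ large one finds four points of $Y^{(n)}$ spanning a Euclidean tetrahedron with $(0,0,0)$ in its interior, placing $(0,0,0) \in \mathrm{int}(C_K(Y^{(n)}))$ and contradicting $(0,0,0) \in \partial C_K(Y^{(n)})$.

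For (c), Schwarz-Pick gives $d_n(0, 1/4) \geq d_{V_n}(0, 1/4) \to d_V(0, 1/4) > 0$, so the $d_n$-diameter of $B_{1/2}(0, d^E)$ is bounded below by some $c_1 > 0$; via the isometry $\phi_n$, the intrinsic $d^K$-diameter of $\phi_n(B_{1/2}(0, d^E))$ in $\partial C_K(Y^{(n)})$ is at least $c_1$. Suppose for contradiction that the Euclidean diameter of $\phi_n(B_{1/2}(0, d^E))$ tends to $0$ along a subsequence, and pick $x_n, y_n$ in this set with $d^K_{\partial C_K(Y^{(n)})}(x_n, y_n) \geq c_1/2$; both sequences Euclidean-converge to $(0,0,0)$. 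By (b), $\partial C_K(Y)$ is a non-degenerate complete connected convex surface through $(0,0,0)$, and Hausdorff convergence $Y^{(n)} \to Y$ propagates to Alexandrov convergence $\partial C_K(Y^{(n)}) \to \partial C_K(Y)$, so Theorem~\ref{convergence} gives $d^K_{\partial C_K(Y^{(n)})}(x_n, y_n) \to d^K_{\partial C_K(Y)}((0,0,0), (0,0,0)) = 0$, contradicting $\geq c_1/2$. The hardest step is this last one: on a general convex surface intrinsic and Euclidean metrics can differ by unbounded ratios (a thin needle-shaped convex body has boundary with large intrinsic but small Euclidean diameter), so the intrinsic bound from (a) alone does not force Euclidean spread; the role of (b) is precisely to exclude these degenerations, so that continuity of the limit intrinsic metric at $(0,0,0)$, realized through Alexandrov convergence, drags intrinsic diameters of Euclidean-shrinking sets through $(0,0,0)$ down in lockstep.
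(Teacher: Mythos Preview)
Your proposal is correct. Parts (a) and (b) are close to the paper's arguments, with minor variations: for (a), the paper exploits the specific construction (each $W_i^{(n)}$ contains the original $W_i$) to fix three points $\{p_1,p_2,p_3\} \subset X^{(n)}$ for all large $n$ and compares directly against the single thrice-punctured sphere $W = \hat\C \setminus \{p_1,p_2,p_3\}$, avoiding your appeal to Theorem~\ref{poincare}; for (b), the paper argues more briefly that $(0,0,0) \in \partial C_K(Y)$ already forces $|Y| \geq 2$ and $Y \neq \SS^2$, then handles $|Y|=2$ essentially as you do.

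Part (c) is where the approaches genuinely diverge. The paper does not use (b) or Alexandrov's convergence theorem; instead it argues directly via area: the image $Z_n := \phi_n(B_{r_0}(0,d_n))$ is an isometrically embedded hyperbolic disk of radius $r_0$, hence has hyperbolic area $4\pi\sinh^2(r_0/2)$; since $Z_n \subset B_{r_0}((0,0,0),d^K)$ lies in a fixed compact subset of $\H^3$ where $d^K \leq C_1 d^E$, the Euclidean area of $Z_n$ is bounded below; finally, any piece of a convex surface contained in a Euclidean ball of radius $R$ has Euclidean area at most $4\pi R^2$, so the Euclidean diameter of $Z_n$ is bounded below. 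Your route via Theorem~\ref{convergence} is more conceptual but requires the limit $\partial C_K(Y)$ to be a complete \emph{connected} convex surface in the sense of that theorem. Note that (b) alone does not give connectedness (for instance, if $Y$ were a spherical band, $\partial C_K(Y)$ would consist of two disjoint totally geodesic disks). This is easily patched---apply Alexandrov's theorem to the compact Euclidean convex surfaces $\partial\overline{C_K(Y^{(n)})}$, which are topological spheres converging to the sphere $\partial\overline{C_K(Y)}$, conclude $d^E_{\partial C_K(Y^{(n)})}(x_n,y_n)\to 0$, and then use that the $d^E$-shortest path stays near the origin where $d^K \leq C_1 d^E$---but your assertion that (b) directly yields connectedness is not justified. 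The paper's area argument sidesteps this issue entirely by never invoking the limit surface.
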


\begin{proof}
To see part (a), take three points $\{p_1, p_2, p_3\}$ in $X^{(n)}$ for $n$ large and let $d^W$ be the Poincar\'e metric on $W=\hat \C-\{p_1, p_2, p_3\}$. Since $U_n \subset W$, the Schwarz-Pick lemma shows that $d_n \geq d^W$ and $B_r(0, d_n) \subset B_r(0, d^W)$ for any $r>0$. Let $r_0>0$ be a small positive number such that $r_0$ is less than the injectivity radius of $d^W$ at $0$ and $B_{r_0}(0, d^W)$ is contained in  $U_n$ for all $n$.  Then $B_{r_0}(0, d_n)$ is contained in the simply connected subset $B_{r_0}(0, d^W)$ in $U_n$. Therefore,  $B_{r_0}(0, d_n)$ can be isometrically lifted to the universal cover of $(U_n, d_n)$. This implies $B_{r_0}(0, d_n)$ is isometric to the standard radius $r_0$ ball in the hyperbolic plane.  Therefore, $r_0$ is a lower bound for the injectivity radius of $(U_n, d_n)$ at $0$ for all $n$.

To see part (b), since $Y^{(n)}$ converges to $Y$ and $(0,0,0) \in \partial C_K(Y^{(n)})$, we see that $(0,0,0) \in \partial C_K(Y)$ and hence $Y$ contains at least two points and $Y \neq\mathbf S^2$ . Now if $Y$ contains only two points, say $Y=\{p_1, p_2\}$, then there exists a sequence of positive numbers $r_n \to 0$ such that
$Y^{(n)} \subset B_{r_n}(p_1) \cup B_{r_n}(p_2)$ in $\mathbf S^2$ where $B_r(p)$ is the ball of radius $r$ centered at $p$ in the spherical metric. Therefore, there exists a sequence of homotopically non-trivial loops $\gamma_n \subset \partial C_K(Y^{(n)})$ through $(0,0,0)$ whose hyperbolic lengths tend to $0$.  The loops $\gamma_n$ can be constructed as follows. Let $V_n$ be the hyperbolic surface $\partial C_K(B_{r_n}(p_1) \cup B_{r_n}(p_2))$, $q_n \in  V_n$ be a point converging to $(0,0,0)$ such that the shortest distance projection from $V_n$ to $\partial C_K(Y^{(n)}))$ sends $q_n$ to $(0,0,0)$. Note that $V_n$ is topologically a ring and there is only one simple closed geodesic separating the two ends of $V_n$.  Let $\delta_n$ be the simple geodesic loop in $V_n$ based at $q_n$ such that (1) $\delta_n$ separates the two ends of $V_n$ and (2) away from the based point $q_n$,  $\delta - \{q_n\}$ is a geodesic. Let
$\gamma_n$ be the image of $\delta_n$ under the shortest distance projection from $V_n$ to $\partial C_K(Y^{(n)})$. 
Since $r_n$ tends to zero, the length of the curve $\delta_n$ tends to zero. On the other hand, the shortest distance projection decreases the distances. Therefore the length of $\gamma_n$ tends to zero.  Finally, since $\delta_n$ separates the two ends of $V_n$, we claim that $\gamma_n$ is homotopically non-trivial, i.e., essential,  in $\partial C_K(Y^{(n)})$.  To this end, let us assume without loss of generality that $C_K(Y^{(n)})$ is 3-dimensional. Let $M=V_n \cup  B_{r_n}(p_1) \cup B_{r_n}(p_2)$ be the topological 2-sphere and $p_i^{(n)}$ be a point in $B_{r_n}(p_i)\cap Y^{(n)}$,  for $i=1,2$. Take a point in the interior of $C_K(Y^{(n)})$ and let $\Theta : \partial C_K(Y^{(n)}) \cup Y^{(n)} \to M$ be the radial projection map induced from the point.  Note that $\Theta$ is a homeomorphism map from a topological 2-sphere to the topological 2-sphere $M$, is the identity map on $Y^{(n)}$, and $\Theta^{-1}(Y^{(n)})=Y^{(n)}$. We will show that  $\Theta(\gamma_n)$ is essential on $M-\{p_1^{(n)}, p_2^{(n)}\}$ which implies that $\gamma_n$ is essential. Let $\pi: M \to  \partial C_K(Y^{(n)}) \cup Y^{(n)}$ be the shortest distance projection.  
Since $\delta_n$ separates $p_1^{(n)}$ from $p_2^{(n)}$ in the topological 2-sphere $M$ and $\Theta \circ \pi: M \to M$ is a degree one continuous map (by Lemma \ref{radialprj}) such that  $\Theta \circ \pi$ sends the annulus $M-\{p_1^{(n)},p_2^{(n)}\}$ to itself, we see that $\Theta (\gamma_n) = \Theta(\pi( \delta_n))$ separates $p_1^{(n)}$ from $p_2^{(n)}$. Thus the claim follows.

Using the isometry $\phi_n$, we see that the homotopically non-trivial loops $\gamma'_n=\phi_n^{-1}(\gamma_n)$ in $(U_n, d_n)$ pass through $0$ such that their lengths in the Poincar\'e metrics $d_n$ tend to zero, i.e.,  $l_{d_n}(\gamma_n') \to 0$.  But this contradicts part (a) that the injectivity radii of $d_n$ at $0$ are bounded away from $0$.

To see part (c), choose the radius $r_0$ in the proof of part (a) to be small such that $B_{r_0}(0, d^W) \subset \D$. Then $B_{r_0}(0, d_n) \subset B_{r_0}(0, d^W) \subset \D$.    Let $Z_n$ be the surface $\phi_n(B_{r_0}(0, d_n))$ which is contained in $\phi_n(\D)$.
Part (c) follows by showing that
$$ \liminf_n diam_{d^E}(Z_n) >0.$$
We will prove a stronger result that  $4\pi  diam_{d^E}(Z_n)^2 \geq Area_{d^E}(Z_n)$ and the Euclidean area $Area_{d^E}(Z_n)$ of $Z_n$ is bounded away from $0$. Note that $Z_n$ lies in a (Euclidean) convex surface $\partial C_K(Y^{(n)})$ and $Z_n$ is contained a Euclidean ball $B_1$ of radius $diam_{d^E}(Z_n)$. 
It is known that if a convex closed surface $A$  (e.g., $A=\partial B_1$)  contains a convex closed surface $A'= \partial (C_K(Y^{(n)}) \cap B_1$),  then $Area_{d^E}(A) \geq Area_{d^E}(A')$. Indeed, by Proposition 2.1.3 \cite{bertsekas} and Lemma 1.2.3 in \cite{schneider}, the shortest distance projection map $P: A \to A'$ is distance decreasing and is surjective. Therefore, $P$ decreases the area. 
This shows $4\pi diam_{d^E}(Z_n)^2 \geq Area_{d^E}(A')$.  But $Area_{d^E}(A') \geq Area_{d^E}(Z_n)$ due to $Z_n \subset A'$.  Thus 
 $4\pi  diam_{d^E}(Z_n)^2 \geq Area_{d^E}(Z_n)$ holds. To obtain the lower bound on $Area_{d^E}(Z_n)$, by construction,  $Z_n \subset \partial C_K(Y^{(n)})$ with the induced metric from $d^K$ is isometric to the standard hyperbolic disk of radius $r_0$ in $\H^2$. In particular,  the hyperbolic area of $Z_n$ is $4\pi \sinh^2(r_0/2)$. On the other hand, since $\phi_n$ is an isometry, we see that $Z_n \subset B_{r_0}(0, d^K)$ in $\H^3_K$. Hence, there is a compact set $Q \subset \H^3$, which contains all $Z_n$ for $n$ large. On the compact set $Q$, there exists a constant $C_1$ such that $d^E \leq d^K \leq C_1 d^E$. 
Therefore, $Area_{d^E}(Z_n) \geq C_1^{-2} Area_{d^K}(Z_n) = 4\pi C_1^{-2} \sinh^2(r_0/2)$ and the result follows.
\end{proof}

\subsection{Proof of Theorem \ref{equicont}}
Let $\Sigma_n= \partial C_K(Y^{(n)})$ and $\tilde\Sigma_n$ be $\partial C_K(Y^{(n)}) \cup Y^{(n)}$ which is a topological 2-sphere.
By the work of Thurston,  the surface  $\Sigma_n$ is 
naturally a  Riemann surface. 
 The conformal map $\phi_n: U_n \to \Sigma_n$ implies that   $\hat \C ^{ X^{(n)}}$
and $\tilde \Sigma_n^{ Y^{(n)}}$ are conformally equivalent. 

We  prove Theorem \ref{equicont} by contradiction. 
Suppose otherwise that there exists  $\epsilon_0>0$ and sequences $x_{k_n}, x_{k_n}' \in U_{k_n}$  such that
$d^{\SS}(x_{k_n}, x_{k_n}') \to 0$ and $|\phi_{k_n}(x_{k_n})-\phi_{k_n}(x_{k_n}')| \geq \epsilon_0$.  If the sequence $\{k_n\}$ is not bounded,  we may assume without loss of generality (after taking a subsequence), that $k_n=n$.  If the sequence $\{k_n\}$ is bounded, we may assume after taking a subsequence that $k_n$ is a constant.  Below we will focus on the main case that $k_n=n$. The same proof also works for the simpler case that $k_n$ is a constant. We omit the details. 

\begin{remark} \label{remark1}
The case $k_n$ being a constant is equivalent to the statement  that each  $\phi_n: (U_n, d^{\SS})  \to (\Sigma_n, d^E)$ is uniformly continuous.  We can see uniform continuity by using  Caratheodory's extension theorem.
By compositing $\phi_n$ with the inverse of the homeomorphism $\Psi(X) =\frac{2x}{1+|x|^2}: \overline{\H^3_P} \to \overline{H^3_K}$, it suffices to show that any conformal map $\rho:(U_n, d_n) \to \partial C_P(Y^{(n)})$ can be extended continuously to their compact closures in $\R^3$. Let us recall  
Caratheodory's extension theorem.
Suppose $A_1$ and $A_2$ are two Riemann surfaces and $B_i$ is a subsurface of $A_i$ bounded by finitely many disjoint Jordan curves $c_i$ in $A_i$ for $i=1,2$. Then Caratheodory's extension theorem says that any biholomorphism from $B_1$ to $B_2$ extends continuously to $B_1\cup c_1$ to $B_2 \cup c_2$. The standard form of Carathéodory's theorem applies to Jordan domains $B_1$ and $B_2$. 
However, the proof of this theorem is inherently local, relying on the standard length-area estimate, which allows it to hold in the more general context described above. 
Now in our case, we take $A_1 =\hat{\C}$, $B_1=U_n$, $B_2=(\partial C_P(Y^{(n)}), d^P_{ \partial C_P(Y^{(n)})})   $ and $A_2$ to be the metric double of the bounded curvature surface $(\partial C_P(Y^{(n)}), d_{\partial C_P(Y^{(n)})}^E)$ across its boundary. By the gluing theorem of Alexandrov-Zalgaller (Theorem 8.3.1 in \cite{res}), the metric double $A_2$ is again a surface of bounded curvature. Thus $A_2$ is a Riemann surface containing $B_2$. Since the conformal structures on $\partial C_P(Y^{(n)})$ induced by $d^P$ and $d^E$ are the same, we see that $B_2$ is conformally embedded in $A_2$ whose boundary consists of Jordan curves. By the Caratheodory extension theorem, we see that $\rho$ extends  to a continuous map from the compact closure $\overline{U_n}$ to $\overline{\partial C_P(Y^{(n)})}$. Therefore, $\rho$ is uniformly continuous. 
\end{remark}

Going forward, we'll assume 
that  $d^{\SS}(x_n, x_n') \to 0$ and $|\phi_n(x_n) -\phi_n(x_n')| \geq \epsilon_0$. 
By taking a subsequence if necessary, we may further assume that $x_n, x_n' \to p \in \hat{\C}$.



\begin{figure}[ht!]
\begin{center}
\begin{tabular}{c}
\includegraphics[width=0.6\textwidth]{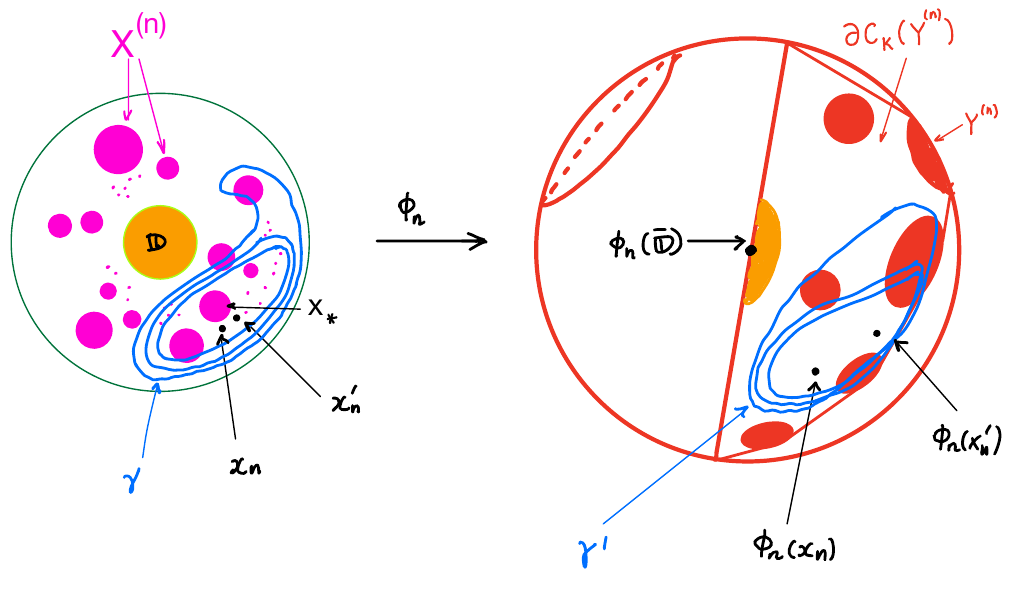}
\end{tabular}
\end{center}
\caption{Equicontinuity and $X^{n}$, $X_*$ and $Y^{(n)}$
.} \label{71111}
\end{figure}

\begin{lemma}\label{53} The limit point $p$ is in the set $X$.
\end{lemma}

\begin{proof}
Suppose otherwise that $p \notin X$, i.e., $p \in U$. Then by the Hausdorff convergence, there exists an open connected neighborhood $W$ of $p$ such that $W \subset U_n$ for $n$ large. 
Let $d^W$ be the Poincare\'e metric on $W$. 
Then the Schwarz-Pick lemma shows $d_n(x,y) \leq d^W(x,y)$ for all $x, y \in W$.  But we also have $d^W(x_n, x_n') \to 0$. Therefore $d_n(x_n, x_n') \to 0$.
By (\ref{dist2}) that $d^K(x,y) \geq d^E(x,y)$ for all $x,y \in \H^3_K$, we have
\begin{align*}
&d_n(x_n, x_n') =d^K_{\Sigma_n}(\phi_n(x_n), \phi_n(x_n'))\\
\geq  &d^E_{\Sigma_n}(\phi_n(x_n), \phi_n(x_n'))) \geq  |\phi_n(x_n)- \phi_n(x_n')|  \geq \epsilon_0.
\end{align*}
This is contradictory to $d_n(x_n, x_n') \to 0$.
\end{proof}


By Lemma \ref{53} and $x_n \in U_n$, we see that $p \in X \cap \partial U$. Let $X_*$ be the connected component of $X$ which contains $p$. Due to the normalization condition on $U_n$, the closed unit disk $\overline{\D}$ is contained in $U$ and $U_n$ for all $n$. 

Recall that if $Z$ is a compact subset of a surface $S$, we define $S^Z$ to be $(S-Z)\cup [Z]$ which is the surface $S-Z$ by adding the ends of $S-Z$ corresponding to connected components of $Z$ in the end topology. 
Construct two families of paths $\Gamma_n$ and $\Gamma_n'$ as follows.
If $X_*$ is a single point, the family
$ \Gamma_n$ is defined to be the set of simple loops in $\hat \C^{X^{(n)}}$ separating $\{x_n,x_n'\}$ and $\D$ and
$\Gamma_n' $ is defined to be the set of simple loops in $\tilde\Sigma_n^{Y^{(n)}}$ separating $\{\phi_n(x_n), \phi_n(x_n')\}$ and $\phi_n(\D)$.  If $X_*$ is a round disk, then  by the construction of $X^{(n)}$, $X_*$ is a connected component of $X^{(n)}$ for $n$ sufficiently large. Therefore $p \in \partial X^{(n)}$ for $n$ large. We define
$ \Gamma_n$  as the union of two families: simple loops $\gamma_1$ and simple arcs $\gamma_2$. Here $\gamma_1$ are simple loops in $\hat \C^{X^{(n)}}-\{[X_*]\}$ such that $\gamma_1$ separate $\{x_n, x_n'\}$ and $\D$  in $\C^{X^{(n)}}$ and $\gamma_2$ are simple arcs in $\hat \C^{X^{(n)}}-\{[X_*]\}$ with end points in $X_*$ such that the simple loops $\gamma_2 \cup [X_*]$ separate $\{x_n, x_n'\}$ and $\D$  in $\C^{X^{(n)}}$. We define 
$\Gamma_n'$ to be the image of $\Gamma_n$ under $\phi_n$ in  $\tilde \Sigma_n^{Y^{(n)}}$. More precisely, $\Gamma_n'$ consists of two families of simple loops $\gamma_1'$ and simple arcs $\gamma_2'$ such that $\gamma_1'$ are in  $\tilde\Sigma_n^{Y^{(n)}}-\{[\phi_n(X_*)]\}$ and separate $\{\phi_n(x_n), \phi_n(x_n')\}$ and $\phi_n(\D)$ and $\gamma_2'$ are simple arcs in  $\tilde\Sigma_n^{Y^{(n)}}-\{[\phi_n(X_*)]\}$ with end points in $Y_*^{(n)}$ and separate $\{\phi_n(x_n), \phi_n(x_n')\}$ and $\phi_n(\D)$  in $\tilde\Sigma_n^{Y^{(n)}}$.  Here $Y_*^{(n)}$ is the component of $Y^{(n)}$ corresponding to $X_*$ under $\phi_n$.

 The conformal invariance of extremal length implies that $EL(\Gamma_n) =EL(\Gamma_n')$.
   We will derive a contradiction by showing that $\liminf_n EL(\Gamma_n') >0$ and $\lim EL(\Gamma_n)=0$.

\subsubsection{Extremal length estimate I:  $\lim_n EL(\Gamma_n)=0$     }

Recall that by the normalization condition $X^{(n)} \subset \{z \in \C| 2 < |z| < 3\}$ and $X_*$ is the component of $X$ containing $p =\lim_n x_n$.

\begin{lemma}\label{6517} 
For any $r>0$, there exist $r'<r/2$ and $N$ such that for $n >N$, no component of  $X^{(n)}-X_*$ intersects both $\{ z\in \C|   |z-x_n|=r\}$ and $\{ z\in \C|   |z-x_n|=r'\}$.
\end{lemma}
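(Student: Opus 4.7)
The plan is to argue by contradiction. Suppose the conclusion fails for some fixed $r>0$. Then for every $k$ we can choose $n_k \to \infty$ and a component $V_k$ of $X^{(n_k)}-X_*$ that meets both $C_r$ and $C_{1/k}$, with $1/k<r/2$. Pick $a_k\in V_k\cap C_r$ and $b_k\in V_k\cap C_{1/k}$. By compactness of the space of closed subsets of $\hat\C$ in the Hausdorff distance, a subsequence of $V_k$ converges to a nonempty compact connected set $V\subset\hat\C$ (connectedness because a Hausdorff limit of compact connected subsets of a compact metric space is connected), and along this subsequence $a_k\to a\in C_r$ and $b_k\to p$. Thus $\{p,a\}\subset V$ and the Euclidean diameter of $V_k$ is at least $r-1/k>r/2$ for $k$ large.

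Next I would use the explicit form of the $W_j^{(n)}$'s to classify $V_k$ for $k$ large. By construction, each component of $X^{(n_k)}$ is a $W_j^{(n_k)}$, which is either a disk component $W_j$ of $X$ or a closed Euclidean $l_{n_k}$-neighborhood of a singleton component of $X$. Excluding $X_*$ removes one such component in the disk case, or punctures one in the singleton case. Since $l_{n_k}\to 0$, the thickened-point pieces and the punctured piece all have diameter less than $r/2$ for large $k$. Because $\operatorname{diam}(V_k)\geq r/2$, it follows that for $k$ large $V_k=W_{j_k}$ must equal a genuine disk component of $X$ of Euclidean diameter at least $r/2$.

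The crux is now a finiteness step: since $X\subset\{1\leq |z|\leq 2\}$ has finite Euclidean area and its disk components are pairwise disjoint, an elementary area estimate shows there are only finitely many disk components of $X$ of diameter $\geq r/2$. By pigeonhole, after passing to a further subsequence, $V_k=D^*$ for one fixed disk component $D^*$ of $X$. Since $D^*$ is closed and contains $b_k\to p$, we get $p\in D^*$, hence $D^*=X_*$. This contradicts the setup in either case: if $X_*$ is a singleton, $D^*$ is a disk and $X_*\neq D^*$; if $X_*$ is a disk, then $X_*=W_{i_0}^{(n_k)}$ for $k$ large, and the components of $X^{(n_k)}-X_*$ are precisely the other (pairwise disjoint) $W_j^{(n_k)}$'s, so $V_k\neq X_*=D^*$. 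Either contradiction proves the lemma. The main obstacle is simply bookkeeping for the two cases of $X_*$ (singleton versus disk); the Hausdorff-limit, diameter-tracking, and finite-area ingredients are all standard once the classification of $V_k$ is in hand.
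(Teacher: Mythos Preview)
Your argument is correct and shares the paper's contradiction setup: extract a subsequence of ``bad'' components $V_k$ of $X^{(n_k)}-X_*$ meeting both $C_r$ and $C_{1/k}$, and show this forces $V_k$ to coincide with $X_*$. The endgames differ. The paper passes to the Hausdorff limit $Z'$ of the $V_k$'s, notes that $Z'$ is a closed disk (as a limit of disks) of diameter at least $r$, uses $X^{(n)}\to X$ to get $Z'\subset X$ and hence $Z'$ lies in a disk component of $X$ that must be $X_*$, and then derives a contradiction because the centers of the $V_k$ lie outside $X_*$ yet converge to the center of $Z'$, which sits in the interior of $X_*$. You instead observe that since thickened-point pieces have diameter $2l_{n_k}\to 0$, each $V_k$ of diameter $\ge r/2$ must already equal some fixed disk component $W_{j_k}$ of $X$; an area count shows only finitely many such components exist, so pigeonhole gives a fixed $D^*=V_k$ for infinitely many $k$, and $b_k\to p$ forces $p\in D^*$, whence $D^*=X_*$, contradicting $V_k\neq X_*$.

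Your route avoids any Hausdorff-limit analysis of the shape of $Z'$; the price is the extra (but easy) finiteness step. One minor remark: your opening extraction of the Hausdorff limit $V$ of the $V_k$'s is never used and can be deleted---all you need is $b_k\to p$ and the diameter lower bound $\mathrm{diam}(V_k)\ge r-1/k$, which follow directly from connectedness and the choice of $a_k,b_k$.
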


\begin{proof} 
Suppose otherwise, there exists a sequence of components $Z_n$ of $X^{(k_n)}-X_*$ such that $Z_n$ intersects both $\{ z\in \C|   |z-x_{k_n}|=r\}$   and $\{ z\in \C|   |z-x_{k_n}|=\frac{1}{n}\}$.  By taking a subsequence if necessary, we may assume that $Z_n$ converges in Hausdorff distance to a disk $Z'$ of positive diameter which intersects  $\{ z\in \C|   |z-x_{k_n}|=r\}$  and contains $p$.  Since the sequence \{$X^{(n)}$\} converges in Hausdorff distance to $X$, there exists a component $X_j$ of $X$ such that $Z' \subset X_j$. Since $p \in Z'$, we have $p\in X_j$. On the other hand, $X_*$ is the component of $X$ containing $p$. Therefore $X_j=X_*$. This shows that $Z' \subset X_*$. This is impossible if $X_*$ is a one-point component of $X$. Hence  $X_*$ is a disk component of $X$. Since $Z_n$ and $X_*$ are different components of $X^{(k_n)}$,  we see the distance from the center of  $Z_n$ to $X_*$ is bounded away from zero. This shows that the center of $Z'$ is outside of $X_*$ and contradicts $Z'
 \subset X_*$.
\end{proof}

%
By Lemma \ref{6517} and $x_n, x_{n}' \to p$, we construct a sequence of positive numbers $\{r_i\}$  and a sequence of integers $\{N_i\}$ increasing to infinity
such that

(1) $r_{i+1} <r_i/2$ and $r_1< \frac{1}{2}$,

(2) if $n \geq N_i$, each connected component of $X^{(n)}-X_*$  intersects at most one of annuli $\{ z\in \C| r_j<  |z-x_{n}|< 2r_j\}$
for $j=1,2, ...,i $,

(3) if $n \geq N_i$, $|x_n -x_n'| \leq r_i/2$, and

(4) if $n\geq N_i$ and  $X_*$ is a disk, then $X_*\cap   \{ z\in \C| r_j<  |z-x_{n}|< 2r_j\} \neq\emptyset$, for all $j=1,2,..., i.$

 For each $n$, construct an annulus $S_n$ as follows.  Since for any round disk $B=\{ z \in\C| |z-a| <r \}$, the intersection $B \cap (\C -X^{(n)})$ is path connected, we can join $x_n$ to $x_{n}'$ by a path $\alpha_n$ in the ball $\{z \in \C|  |z-x_n| <2 |x_n-x_{n}'|\}$ such that $\alpha_n \cap X^{(n)} =\emptyset$.  Let the annulus $S_n$ be $\hat{\C} -(\{|z| \leq 1\} \cup \alpha_n)$.

For any large $i$, take $n \geq N_i$. If $X_*$ consists of one point,  the disjoint annuli $\{ r_j<|z -x_n| < 2r_j\}$ for $j=1,2,...,i$ in $S_n$ satisfy conditions in Proposition \ref{8.2} where $W = X^{(n)}\cap S_n$. Therefore, by (\ref{72eq}), $EL(\Gamma_n^*) \leq \frac{216\pi}{i}$ where $\Gamma_n^*$ is the set of all simple loops in $S_n^{X^{(n)}}$ separating the two ends of $S_n$. On the other hand by construction, $\Gamma_n$ consists of all simple loops in $\hat{\C}^{X^{(n)}}$   separating $ \{|z| \leq 1\}$ from $\{x_n, x_{n}'\}$. Therefore $\Gamma_n^* \subset \Gamma_n$.  By monotonicity of extremal lengths, we have $EL(\Gamma_n) \leq EL(\Gamma_n^*)$. It follows that $EL(\Gamma_n) \leq \frac{216\pi}{i}$ as long as $n \geq N_i$. Thus $\lim_n EL(\Gamma_n)=0$.

If $X_*$ is a disk, by construction, $X_*$ is a component of $X^{(n)}$ for $n$ large.  The disjoint annuli $\{ r_j<|z -x_n| <2r_j\}$ for $j=1,2,...,i$ in $S_n$ satisfy conditions in Proposition \ref{8.2} where $W = X^{(n)}\cap S_n-X_*$. Therefore, by (\ref{72eq}), $EL(\Gamma_n^*) \leq \frac{216\pi}{i}$ where $\Gamma_n^*$ is the set of all simple loops $\alpha$ in $S_n^{W}$ separating the two ends of $S_n$.
By definition, $\Gamma_n$ consists of all simple arcs $\beta$ in $\hat{\C}^{X^{(n)}}$ such that $\beta$ or $\beta \cup [X_*]$ are simple loops   separating $ \{|z| \leq 1\}$ from $\{x_n, x_{n}'\}$. By condition (4) of the choices of $r_j$'s, we see that each simple loop $\alpha$ in $\Gamma^*_n$ contains an arc $\beta$ which is in $\Gamma_n$.  By Lemma \ref{71}(a) on the monotonicity of the extremal lengths, we have $EL(\Gamma_n) \leq EL(\Gamma_n^*) \leq \frac{216\pi}{i}$ when $n > N_i$. Therefore, $\lim_n EL(\Gamma_n)=0$.

\subsubsection{ Extremal length estimate II: $\liminf_n EL(\Gamma_n') >0$}
We use the work of  Reshetnyak \cite{res} on conformal geometry of surfaces of bounded curvature to justify some of computations in this subsection. See Appendix \S11 for details.
Suppose a convex surface $S$ is $\partial C_P(Z)$ for a compact set $Z \subset \partial \H^3$. Then Thurston's theorem says $(S, d^P_S)$ is hyperbolic and therefore naturally a Riemann surface.  However, the surface $S$ may not be smooth in $\H^3$. Now consider the two conformally equivalent Riemannian metrics $d^E$ and $d^P$ on $\H^3$.  If $S$ is smooth, then clearly the induced Riemannian metrics $d^E_S$ and $d^P_S$ are conformally equivalent on $S$.  In our case, the surface $S$ may well be non-smooth and the induced path metric $d^E_S$ may not be Riemannian. But $(S, d^E_S)$ is a surface of bounded curvature.  Reshetnyak's work implies that $d^E_S$ and $d^P_S$ are conformally equivalent in the sense of Ahlfors-Beurling definition in extremal lengths. More precisely, the path metric $d^E_S$ can be written as $\lambda d^P_S$ for some non-negative Borel measureable function $\lambda$ on $S$.
In particular,
for a convex surface $\Sigma =\partial C_K(Z)$ in the Klein model $(\H^3, d^K)$, the induced path metrics $d^K_{\Sigma}$ and $d'_{\Sigma}$ on $\Sigma$ are conformal where $d'(x, y) =|\Psi^{-1}(x)-\Psi^{-1}(y)|$ with $\Psi(x)=\frac{2x}{1+|x|^2}$. 
By Proposition \ref{lip} and Theorem \ref{area},  $d'(x,y) \geq \frac{1}{2}|x-y|$ and the area of a convex surface $\Sigma$ in $d'$ metric is at most $16\pi$.
 Define an extended metric $m_n$ on $\tilde \Sigma_n^{ Y^{(n)}}$ to be the pair $(d'_{\Sigma_n}, \nu_n)$ where $d'_{\Sigma_n}$ is the induced path metric from $d'$ on $\Sigma_n$ and $\nu_n$ on a connected component of $Y^{(n)}$ is the spherical diameter of the component.
   Hence the area of $m_n$ is uniformly bounded from above by $16\pi+\frac{\pi}{2}\cdot4\pi^2<100$ since the square of the diameter of a spherical disk is at most $\pi/2$ times its area.   Since
$EL(\Gamma_n') \geq l_{m_n}(\Gamma_n')^2/A(m_n) \geq \frac{1}{100} l_{m_n}(\Gamma_n')^2$,
 it suffices to show that $\liminf_n l_{m_n}(\Gamma_n') >0$.  We prove $\liminf_n l_{m_n}(\Gamma_n') >0$ by replacing the
 metric $d'$ in $m_n$ by the Euclidean metric $d^E$. Let $m'_n=(d^E|_{\Sigma_n}, \nu_n)$ be the extended metric on $\tilde \Sigma_n^{Y^{(n)}}$. Then due to $d'(x,y) \geq \frac{1}{2}|x-y|$,
 $\liminf_n l_{m_n}(\Gamma_n') \geq \frac{1}{2} \liminf_n l_{m_n'}(\Gamma_n')$  and the result follows by showing the following

 \begin{equation} \label{78.1}
 \liminf_n l_{m_n'}(\Gamma_n') >0. \end{equation}

Note that by assumption $(0,0,0)\in \partial C_K( Y^{(n)})$. It follows that each component $Y'$ of $Y^{(n)}$ is a spherical ball of radius at most $\pi/2$ and hence is convex on $\mathbf S^2$. We now prove (\ref{78.1}) by contradiction.
Suppose otherwise that $\liminf_n l_{m_n'}(\Gamma_n') =0$. After taking a subsequence, we may assume that there exists a sequence of simple loops $\gamma_n \in \Gamma_n'$ such that $l_{m_n'}(\gamma_n) \to 0$. For each $\gamma_n$, construct a new path
$\tilde \gamma_n$ obtained by gluing to $\gamma_n -[Y^{(n)}]$ the shortest spherical geodesic segment $\delta$ in each component $Y^{(n)}_i$ of $Y^{(n)}$ for which $[Y_i^{(n)}] \in \gamma_n \cap [Y^{(n)}]$ such that the end points of $\delta$ are the end points of $\gamma_n -Y^{(n)}$.  By the construction of $m_n'$, we have $l_{m_n'}(\gamma_n) \geq l_E(\tilde \gamma_n)$ where $l_E(\beta)$ is the Euclidean length of a path $\beta$.   It follows that $\lim_n l_E(\tilde \gamma_n) =0$.

If $X_*$ is a single point, by construction, $\tilde \gamma_n$ is a simple loop in $\tilde \Sigma_n$ separating two compact sets
$A_n:=\phi_n(\overline{\D})$ and $B_n=\{\phi_n(x_n), \phi_n(x_n')\}$. By Lemma \ref{3pts} and the assumption on $B_n$, both Euclidean diameters of $A_n$ and $B_n$ are bounded away from $0$.  After taking a subsequence, we may assume that $A_n$ and $B_n$ converge in Hausdorff distances to two compact sets $A$ and $B$ of positive Euclidean diameter and $\tilde \gamma_n$ converges uniformly to $\tilde \gamma$ in
$\partial C_K(Y) \cup Y$ as Lipschitz maps. 
Then $\gamma$ seprates $A,B$ in $\partial C_K(Y)\cup Y$ and
by the well known fact on path metrics, $l_E(\tilde \gamma) \leq \liminf_n l_E(\tilde \gamma_n)$. 
Therefore $l_E(\tilde \gamma)=0$, i.e. $\tilde \gamma$ is a single point. However, a single point cannot separate $A,B$ in $\partial C_K(Y)\cup Y$, which is a topological sphere since $Y$ contains at least 3 points by Lemma \ref{3pts}.

If $X_*$ is a disk component of $X$, then by construction $X_*$ is a component of $X^{(n)}$ for $n$ large.  Let $Y^{(n)}_*$ be the corresponding component of $Y^{(n)}$. 
If $\tilde\gamma_n$ is a simple arc ending at $[Y_*^{(n)}]$, 
construct a new loop $\gamma_n^*$ by gluing to $\tilde\gamma_n$ the shortest geodesic segment $\beta_n$ in $\mathbf S^2$ such that $\partial \beta_n =\partial  \tilde{\gamma_n}$. Since $Y_*^{(n)}$ is convex in $\SS^2$  $\beta_n \subset Y_*^{(n)}$. If $\tilde\gamma_n$ is already a closed loop, just let $\gamma_n^*$ be $\tilde\gamma_n$.
By the construction $\gamma^*_n$ separates $A_n$ from $B_n$ in $\tilde \Sigma_n$.  
We claim that $\lim l_E(\gamma_n^*)=0$ and therefore reduce this case to the case just proved above.  To this end, let $\beta_n'$ be the Euclidean line segment having the same endpoints as $\beta_n$.  Since $\beta_n$ has length at most $diam_{d^{\mathbf S}}(Y_*^{(n)}) \leq \pi$, we have $l_E(\beta_n') \geq \frac{2}{\pi} l_{E}(\beta_n).$ On the other hand, $l_E(\beta_n') \leq l_E(\tilde \gamma_n)$ since they have the same end points. It follows that
$$ l_E(\gamma_n^*) =l_E(\tilde \gamma_n) +l_E(\beta_n) \leq (1+\frac{\pi}{2}) l_E(\tilde \gamma_n).$$ Therefore $\lim_n l_E(\gamma_n^*)=0$.



\subsection{Extension of $\phi_n$ to the Riemann sphere}
Let $\Sigma_n=\partial C_K(Y^{(n)})$. By Theorem \ref{equicont},  each map $\phi_n$ is uniformly continuous and hence can be extended continuously to a continuous map, still denoted by $\phi_n$, from $(\overline{U_n}, d^{\mathbf S}) $ to $(\overline{\Sigma_n}, d^{E})$. Here $\bar Z$ is the closure of a set $Z$ in $\R^3$ or $\hat \C$. Furthermore, the
 family of the extended maps $\{\phi_n: (\overline{U_n}, d^{\mathbf S}) \to (\overline{\Sigma_n}, d^{E}) \}$ is again equicontinuous.

Our next goal is to extend $\phi_n$ continuously to a map from $\hat{\C} =U_n \cup X^{(n)}$ to $ \Sigma_n \cup Y^{(n)}$ such that the extended family is still equicontinuous with respect to the spherical metric on $\hat \C$ and the Euclidean metric. 
In the spherical metric $d^{\SS} =\frac{2|dz|}{1+|z|^2}$ on the Riemann sphere $\hat{\C}$, all Euclidean disks and half spaces are spherical closed balls. 
We extend each homeomorphism $\phi_n$ to a continuous map from $(\hat{\C}, d^{\SS})$ to $\Sigma_n \cup Y^{(n)}$ by coning from the centers of disks. More precisely, let $D_r =\{ z \in \C| |z| \leq r\}$ and $S_r^1 =\partial D_r$ be the disk of radius $r$ and its boundary in $\C$. Given any homeomorphism
$f: S^1_r \to S^1_R$, its \it Euclidean central extension \rm $F: D_r \to D_R$ is the homeomorphism defined by the formula
\begin{equation}\label{cent} F(z) =\frac{|z|}{r} f(\frac{r z}{|z|}). \end{equation}
For a round disk $W=B_{r}(p, d^{\mathbf S})$ in the 2-sphere $\mathbf S^2 \subset \R^3$ of radius $r \leq \pi/2$, let $\hat{W}$ be the 2-dimensional Euclidean disk $\hat{W} \subset \R^3$ such that $\partial \hat W=\partial W$. The projection $\rho: \hat W \to W$ from $-p$ sends each point $x \in \hat{W}$ to the intersection of the ray from $-p$ to $x$ with $\mathbf S^2$. It is a bi-Lipschitz homeomorphism whose bi-Lipschitz constant is at most $\pi$.
We extend a homeomorphism
$f$ from the boundary of a spherical disk to the boundary of a spherical disk by the formula $\hat{f}=\rho_1 \circ F \circ \rho_2^{-1}$ where $F$ is the central extension to the Euclidean disk and $\rho_1$ and $\rho_2$ are bi-Lipschitz homeomorphisms produced above. For simplicity, we still call $\hat{f}$ the \it central extension \rm of $f$ with respect to the spherical metrics.
Take a disk component $Z$ of $X^{(n)}$. Then $\phi_n(\partial Z)$ is the boundary of a disk component $Z'$ of $Y^{(n)}$. Both $Z$ and $Z'$ have spherical radii at most $\pi/2$ by the normalization condition that $X^{(n)} \subset \{ z \in \C| 2 < |z| < 3\} $ and $(0,0,0) \in \partial C_K(Y^{(n)})$. 
Extending $\phi_n$ to $Z$ by the spherical central extension produces a homeomorphism, 
still denote it by $\phi_n$ which is now defined on $\hat{\C}$ with image in $\Sigma_n \cup Y^{(n)}$.
Proposition \ref{512} below shows that the family of extended continuous maps $\{ \phi_n: (\hat{\C}, d^{\SS}) \to (\Sigma_n \cup Y^{(n)}, d^E)\}$ are equicontinuous. It is proved in two steps. In the first step, we show that spherical central extensions of functions in an equicontinuous family of maps between circles form an equicontinuous family.
Due to the bi-Lipschitz property of projections $\rho$'s, it suffices to show that
the Euclidean central extensions of members of an equicontinuous family of maps between circles are still equicontinuous. This is in Proposition \ref{equi2}.
In the second step, we show that the extended maps $\{\phi_n\}$ on $(\hat{\C}, d^{\SS})$ are equicontinuous.






Suppose $g:(X,d) \to (Y,d') $ is a map between two non-empty metric spaces. Its modulus of continuity function is
$\omega(g, \delta) =\sup\{ d'(g(x_1),g(x_2))| d(x_1, x_2) \leq \delta\}$. By definition, $d'(g(x),g(y)) \leq \omega(g, d(x,y))$. If $\delta_1 > \delta_2$, then $\omega(g, \delta_1) \geq \omega(g, \delta_2)$. Also, if $g: X \to Y$ is a surjective map from $(X,d)$ to $(Y,d')$, then
\begin{equation}\label{equi1} \omega(g, diam_d(X)) =diam_{d'}(Y)
\end{equation}
where $diam_d(A)$ is the diameter of a set $A$ in a metric space $(X,d)$. We consider the standard 2-dim Euclidean metric in the following proposition.

\begin{proposition} \label{equi2}Suppose $F: (D_r,d^E) \to (D_R,d^E)$ is the central extension of a continuous map $f: (\partial D_r,d^E_{\partial D_r}) \to (\partial D_R,d^E)$ given by (\ref{cent}). Here $d^E$ denotes the standard 2-dim Euclidean metric and $d^E_{\partial D_r}$ denotes the natural length metric on $\partial D_r$ induced by $d^E$.
Then $$\omega(F, \delta) \leq 4 \omega(f,  \pi\delta).$$

\end{proposition}

\begin{proof}  The proof is based on several lemmas. 


\begin{lemma} \label{equi5} Given $r_1, r_2 \geq 0$, $| r_1 e^{\sqrt{-1} \theta_1} -r_2 e^{\sqrt{-1} \theta_2}| \geq \frac{ r_1}{2}
|e^{\sqrt{-1} \theta_1} - e^{\sqrt{-1} \theta_2}|.$
\end{lemma}
\begin{proof}
We divide the proof into two cases. In the first case $\cos(\theta_2-\theta_1) \leq 0$. Then
$|r_1 e^{\sqrt{-1}\theta_1}-r_2e^{\sqrt{-1}\theta_2}| =|r_1 -r_2e^{\sqrt{-1}(\theta_2-\theta_1)}|
\geq Re(r_1 -r_2e^{\sqrt{-1}(\theta_2-\theta_1)})$
$=r_1 -r_2\cos(\theta_2-\theta_1) \geq r_1 \geq \frac{ r_1}{2}|e^{\sqrt{-1} \theta_1} - e^{\sqrt{-1} \theta_2}| .$ Here $Re(z)$ is the real part of a complex number $z$.
In the second case $\cos(\theta_2-\theta_1)\geq 0$. Then $|sin(\theta_2-\theta_1)| \geq \frac{ 1}{2} |e^{\sqrt{-1} \theta_1} - e^{\sqrt{-1} \theta_2}|$. Therefore,
$|r_1 e^{\sqrt{-1}\theta_1}-r_2e^{\sqrt{-1}\theta_2}| =|r_1e^{\sqrt{-1}(\theta_1-\theta_2)}-r_2|
\geq Im( r_1e^{\sqrt{-1}(\theta_1-\theta_2)}-r_2)$ $=r_1|\sin(\theta_1-\theta_2)| \geq \frac{ r_1}{2} |e^{\sqrt{-1} \theta_1} - e^{\sqrt{-1} \theta_2}|.$
Here $Im(z)$ is the imaginary part of a complex number $z$.
\end{proof}

\begin{lemma}\label{equi3} If $x \in (0,1]$ is a real number, then $x \omega(f, \delta) \leq 2 \omega(f, x \delta).$
\end{lemma}
\begin{proof}
It follows from the definition and triangle inequality that if $k \in \mathbb Z_{>0}$ is a natural number, then
\begin{equation}\label{equi4}  \omega(f, \frac{\delta}{k})  \geq  \frac{1}{k} \omega(f, \delta).
\end{equation}
Therefore, for $x \in (0,1]$,
$$\omega(f, x \delta) \geq \omega(f, \frac{\delta}{[\frac{1}{x}]+1}) \geq \frac{1}{[\frac{1}{x}]+1} \omega(f, \delta) \geq \frac{1}{2/x} \omega(f, \delta)
=\frac{x}{2}\omega(f, \delta).$$
\end{proof}

Now we prove Proposition \ref{equi2}. Assume that $r_1e^{\sqrt{-1}\theta_1},r_2e^{\sqrt{-1}\theta_2}$ are two points in $D_r$ such that 
$|r_1e^{\sqrt{-1}\theta_1}-r_2e^{\sqrt{-1}\theta_2}| \leq  \delta$.  This implies $|r_1-r_2| \leq \delta$ since $|r_1-r_2| \leq |r_1e^{\sqrt{-1}\theta_1}-r_2e^{\sqrt{-1}\theta_2}|.$ Also, by Lemma \ref{equi5}, 
$$
r_1|e^{\sqrt{-1}\theta_1}- e^{\sqrt{-1}\theta_2}| \leq 2 \delta.
$$
Then by Lemmas \ref{equi3}, \ref{equi5},  (\ref{equi4}) and (\ref{equi1}), we have
$$|F(r_1e^{\sqrt{-1}\theta_1})-F(r_2e^{\sqrt{-1}\theta_2})|$$
$$\leq |F(r_1e^{\sqrt{-1}\theta_1})-F(r_1e^{\sqrt{-1}\theta_2})| + |F(r_1e^{\sqrt{-1}\theta_2})-F(r_2e^{\sqrt{-1}\theta_2})|$$
$$ =\frac{r_1}{r} |f(re^{\sqrt{-1}\theta_1})-f(re^{\sqrt{-1}\theta_2})| +\frac{|r_1-r_2|}{r}| f(re^{\sqrt{-1}\theta_2})|$$
$$ 
\leq \frac{r_1}{r} 
\omega\left(f,  d^E_{\partial D_r}( re^{\sqrt{-1}\theta_1}, r e^{\sqrt{-1}\theta_2})      \right )
+\frac{|r_1-r_2|}{r} R$$
$$      \leq  \frac{r_1}{r} 
\omega\left(f,  \frac{\pi}{2}| re^{\sqrt{-1}\theta_1}- r e^{\sqrt{-1}\theta_2}|      \right)
+\frac{|r_1-r_2|}{r} \omega(f, \pi r)                       $$
$$     \leq 2 
\omega\left(f,  \frac{\pi}{2}| r_1e^{\sqrt{-1}\theta_1}- r_1 e^{\sqrt{-1}\theta_2}|      \right)
+ 2 \omega(f, \pi|r_1-r_2|)    $$
$$      \leq 2 \omega(f,  \pi \delta) +2 \omega(f, \pi\delta)    $$
$$\leq 4 \omega(f, \pi\delta).$$
\end{proof}

As a consequence, we have

\begin{corollary} \label{5.6}  (a)  If $\{ f_n: S_{r_n}^1 \to S_{R_n}^1: n \in \mathbb N \}$ is an equicontinuous family of maps between circles, then the central extensions $\{ F_n: D_{r_n} \to D_{R_n}| n \in \mathbb Z_{>0}\}$ are equicontinuous.

(b) If $\{f_n: \partial W_n \to \partial  W^*_n:n \in \mathbb N  \}$ is an equicontinuous family of maps between boundaries of spherical balls $W_n$ and $W_N^*$ of radii at most $\pi/2$, then the central extensions $\{\hat{f_n}:  W_n \to W^*_n\}$  are equicontinuous.
\end{corollary}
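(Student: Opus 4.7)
The plan is to derive both parts as formal consequences of Proposition \ref{equi2}, which already does the main analytic work by giving the uniform estimate $\omega(F,\delta)\leq 4\omega(f,2\pi\delta)$ for the Euclidean central extension. The pleasant feature of this bound is that the constants $4$ and $2\pi$ are independent of the radii $r$ and $R$, so it behaves well across a whole family of extensions.

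For part (a), I would argue directly from this modulus-of-continuity bound. Given $\epsilon>0$, the equicontinuity of $\{f_n\}$ yields a $\delta_0>0$ with $\omega(f_n,\delta_0)\leq \epsilon/4$ for every $n$. Setting $\delta=\delta_0/(2\pi)$ and applying Proposition \ref{equi2} to each $F_n$ gives $\omega(F_n,\delta)\leq 4\omega(f_n,2\pi\delta)=4\omega(f_n,\delta_0)\leq \epsilon$, which is equicontinuity of $\{F_n\}$.

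For part (b), the plan is to reduce to part (a) by passing through the bi-Lipschitz projections $\rho\colon \hat W\to W$ introduced just before the statement of the corollary, whose bi-Lipschitz constants are at most $\pi$ uniformly in the radius (as long as the spherical radius is $\leq \pi/2$). Writing $\hat f_n=\rho_{1,n}\circ F_n\circ \rho_{2,n}^{-1}$ with $F_n$ the Euclidean central extension of the conjugated boundary map $g_n:=\rho_{1,n}^{-1}\circ f_n\circ \rho_{2,n}\colon \partial\hat W_n\to\partial\hat W_n^*$, one observes that the family $\{g_n\}$ between the Euclidean circles is equicontinuous (since the $\rho$'s are $\pi$-bi-Lipschitz uniformly in $n$), so part (a) gives equicontinuity of $\{F_n\}$, and composing again with $\rho_{1,n}$ and $\rho_{2,n}^{-1}$ preserves equicontinuity.

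I do not anticipate any real obstacle: the only delicate point is keeping the bi-Lipschitz constants of the projections uniformly bounded in $n$, which is exactly what the $r\leq\pi/2$ hypothesis and the stated constant $\pi$ guarantee. Once that is noted, the argument is essentially a one-line composition of uniform estimates.
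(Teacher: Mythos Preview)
Your proposal is correct and matches the paper's own reasoning: the paper states the corollary simply as a consequence of Proposition \ref{equi2}, having already noted (in the paragraph preceding Proposition \ref{equi2}) that the spherical case reduces to the Euclidean one via the uniformly $\pi$-bi-Lipschitz projections $\rho$. Your write-up fills in exactly those details, so there is nothing to add.
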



\begin{proposition}\label{512} (a) The family of extended homeomorphisms $\{ \phi_n:( \hat{\C}, d^{\mathbf S}) \to (\Sigma_n \cup Y^{(n)}, d^E)\}$ is equicontinuous. In particular, there exists a subsequence of $\{\phi_n\}$ converging uniformly to a continuous function $\phi: (\hat \C, d^{\mathbf S}) \to \partial C_K(Y) \cup Y$.

(b) The limit function $\phi$ is a surjective map from $\hat{\C}$ onto  $\partial C_K(Y) \cup Y$ such that $\phi(X)=Y$ and  $\phi|_U$ is an isometry from
$(U, d^U)$ to $\partial C_K(Y)$.  In particular, $\partial C_K(Y)$ is connected.
\end{proposition}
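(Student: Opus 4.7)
The plan is to deduce both parts from the equicontinuity on $\overline{U_n}$ established in Theorem \ref{equicont}, together with Corollary \ref{5.6} (which controls spherical central extensions), the generalized Arzela--Ascoli Theorem \ref{aa}, and the convergence theorems \ref{convergence} and \ref{poincare}. For part (a), the first step is to observe that for each disk component $Z$ of $X^{(n)}$ the boundary restriction $\phi_n|_{\partial Z}$ is a homeomorphism between two spherical circles, and these restrictions inherit equicontinuity from Theorem \ref{equicont} since $\partial Z \subset \overline{U_n}$. Corollary \ref{5.6}(b) then guarantees that the central extensions $\phi_n|_Z$ themselves form an equicontinuous family. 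To merge this with the equicontinuity on $\overline{U_n}$ into a single modulus of continuity on $\hat{\C}$, given $\epsilon > 0$ I would choose $\delta$ so that both oscillations over spherical distance $2\delta$ are at most $\epsilon/3$; for $x, y \in \hat{\C}$ with $d^{\SS}(x,y) < \delta$ not lying jointly in $\overline{U_n}$ or a single $Z$, insert a bridge point on $\partial Z$ (where $Z$ is the disk containing $y$) and apply the two bounds in turn. Since $\hat{\C}$ is compact and the targets $\Sigma_n \cup Y^{(n)}$ sit in a uniformly bounded subset of $\R^3$ by the normalizations, Theorem \ref{aa} then extracts a subsequence of $\phi_n$ converging uniformly to a continuous limit $\phi: (\hat{\C}, d^{\SS}) \to \partial C_K(Y) \cup Y$.

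For part (b), the isometry $\phi|_U: (U, d^U) \to \partial C_K(Y)$ follows from three independent convergences: for $p, q \in U$, Hausdorff convergence $X^{(n)} \to X$ yields $p_n, q_n \in U_n$ with $p_n \to p$, $q_n \to q$; Theorem \ref{poincare} gives $d_n(p_n, q_n) \to d^U(p,q)$; and since $\partial C_K(Y^{(n)})$ Alexandrov-converges to $\partial C_K(Y)$, Theorem \ref{convergence} gives $d^K_{\Sigma_n}(\phi_n(p_n), \phi_n(q_n)) \to d^K_{\partial C_K(Y)}(\phi(p), \phi(q))$, so the per-$n$ isometry property of each $\phi_n$ transfers to the limit. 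For the identity $\phi(X) = Y$: given $x \in X$, pick $x_n \in X^{(n)}$ with $x_n \to x$, so $\phi(x) = \lim \phi_n(x_n) \in \lim Y^{(n)} = Y$ by Hausdorff convergence; conversely for $y \in Y$, pick $y_n \in Y^{(n)}$ with $y_n \to y$, and any subsequential limit $x$ of $\phi_n^{-1}(y_n) \in X^{(n)}$ lies in $X$ with $\phi(x) = y$. Surjectivity onto $\partial C_K(Y) \cup Y$ comes from the Hausdorff convergence $\Sigma_n \cup Y^{(n)} \to \partial C_K(Y) \cup Y$ (a consequence of $Y^{(n)} \to Y$ and continuity of hyperbolic convex hulls) via the standard lifting: any $z$ in the limit equals $\lim z_n$ for $z_n \in \phi_n(\hat{\C})$, and a subsequential limit of $\phi_n^{-1}(z_n)$ in the compact $\hat{\C}$ maps to $z$. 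Finally, since $\phi|_U$ has image in $\partial C_K(Y)$ and $\phi(X) = Y$, surjectivity of $\phi$ forces $\phi(U) = \partial C_K(Y)$, exhibiting the latter as the continuous image of the connected domain $U$ and hence connected.

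The main obstacle I anticipate lies in part (a): one must verify that the boundary restrictions $\phi_n|_{\partial Z}$ form a \emph{single} equicontinuous family jointly over all $n$ and all disk components $Z$ of $X^{(n)}$, so that Corollary \ref{5.6}(b) can be invoked with one uniform modulus of continuity. This requires that both the source circles $\partial Z$ in $\hat{\C}$ and the target circles $\partial \phi_n(Z)$ in $\SS^2$ have uniformly controlled spherical geometry, which is where the normalizations $X^{(n)} \subset \{1 < |z| < 2\}$ and $(0,0,0) \in \partial C_K(Y^{(n)})$ enter: they keep the spherical radii of components of $Y^{(n)}$ bounded above by $\pi/2$, validating the hypothesis of Corollary \ref{5.6}(b).
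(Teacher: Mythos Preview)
Your proposal is correct and follows essentially the same route as the paper: equicontinuity on $\overline{U_n}$ from Theorem \ref{equicont} combined with Corollary \ref{5.6}(b) on the disk components, merged via bridge points on $\partial Z$; then Arzela--Ascoli, and for (b) the convergence Theorems \ref{convergence} and \ref{poincare} for the isometry together with a Hausdorff-limit argument for $\phi(X)=Y$ and surjectivity (the paper packages the latter as Lemma \ref{456}). One small omission: your bridge-point step only inserts one boundary point, which handles the case $x\in\overline{U_n}$, $y\in Z$; when $x$ and $y$ lie in two \emph{different} disk components you need to insert two bridge points (or, as the paper does, first find an intermediate point $z\notin X^{(n)}$ on the spherical geodesic and apply the one-bridge case twice), yielding a $4\epsilon$ rather than an $\epsilon$ bound.
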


\begin{proof} To see (a), by the normalization conditions that $(0,0,0) \in \partial C_K(Y^{(n)})$ and $X^{(n)} \subset \{z \in \C: 1<|z|<2\}$, each component of $Y^{(n)}$ and $X^{(n)}$ has radius at most $\pi/2$ in $d^{\mathbf S}$.  Thus Corollary \ref{5.6}(b) applies.  Take any $\epsilon >0$, by Theorem \ref{equicont} and Corollary \ref{5.6}, there exists $\delta>0$  such that if $d^{\SS}(x,y) \leq \delta$  and either (i)  $x, y \in \overline{\hat{\C}-X^{(n)}} $ or (ii) $x, y$ are both in a connected component of $ X^{(n)}$, then $|f_n(x)-f_n(y)|\leq \epsilon$.  It remains to prove the cases where  the pair $x, y$ with $d^{\SS}(x,y) \leq \delta$ satisfy that (1)
one of them is  in $X^{(n)}$ and the other is in $\hat{\C}-X^{(n)} $, or  (2) $x, y$ are in different connected components of $X^{(n)}$. Consider a shortest geodesic $\gamma$ joining $x$ to $y$ in $(\hat\C,d^{\mathbf S})$.
In the first case (1), we may assume that $x \in  X^{(n)}$ and $y\in \hat{\C}-X^{(n)} $. Let $z$ be an intersection point of $\gamma \cap \partial X^{(n)}$ such that $x$ and $z$ are in the same component of $X^{(n)}$. Then $d^{\SS}(x,z) \leq \delta$ and $d^{\SS}(z,y) \leq \delta$. Therefore,  $|f_n(x) -f_n(y)| \leq |f_n(x)-f_n(z)|+|f_n(z)-f_n(y)| \leq 2 \epsilon$.  In the second case (2)  that $x, y$ are in different components of
$X^{(n)}$, then the geodesic segment $\gamma$ contains a point $z \notin X^{(n)}$ with $d^{\SS}(x,z) \leq \delta$ and $d^{\SS}(z,y) \leq \delta$. Then by the  case just proved, $|f_n(x) -f_n(y)| \leq |f_n(x)-f_n(z)|+|f_n(z)-f_n(y)| \leq 4 \epsilon$.
In all cases, we have established the equicontinuity.  Now by the Azela-Ascoli theorem applied to \{$\phi_n: (\hat \C, d^{\mathbf S}) \to (\R^3, d^E)$\}, we see that it has a subsequence converging uniformly to a continuous function $\phi: (\hat \C, d^{\mathbf S}) \to (\R^3, d^E)$.  By the construction of $\phi_n$, we see that $\phi(\hat \C) \subset \partial C_K(Y) \cup Y$. This proves part (a).

To see part (b),  we  begin with,

\begin{lemma} \label{456} Let  $X_n$ (resp. $X_n'$) be compact subsets of a metric space $W$ (resp. $W'$) such that $X_n$ (resp. $X_n'$) converges in Hausdorff distance to a compact subspace $X$ (resp. $X'$). Suppose $f_n: X_n \to X_n'$ is a sequence of continuous functions converging uniformly to $f: X \to X'$.
If $A_n \subset X_n$ is a sequence of compact sets converging in Hausdorff metric to a compact set  $A$, then $f_n(A_n)$ converges in Hausdorff metric to $f(A)$.  In particular, if $f_n$ is onto for all $n$, $f$ is onto.
\end{lemma}
\begin{proof} 
Take any point $f(a)$ in $f(A)$ with $a\in A$. By definition that $A_n$ converges in Hausdorff distance to $A$, there exists $p_n \in A_n$ such that $p_n \to a$. By uniform convergence, $f(a) =\lim_n f_n(p_n)$ where $f_n(p_n) \in f_n(A_n)$.  Next, suppose $f_{n_i}(p_{n_i})$ is a converging sequence whose limit is $q$.  By taking a subsequence if necessary, we may assume that $p_{n_i} \to a \in A$. Therefore, by uniform convergence, $q =\lim_i f_{n_i}(p_{n_i}) = f(a) \in f(A)$.  This shows that $\{f_n(A_n)\}$ Alexandrov converges to $f(A)$.   Since $f_n(A_n)$ and $f(A)$ are compact,  we see $f_n(A_n)$ converges in Hausdorff distance to $f(A)$.
\end{proof}

 Lemma \ref{456} implies that $\phi(\hat{\C})=\partial C_K(Y) \cup Y$  and $\phi(X)=Y$ since $\phi_n$ and $\phi_n|_{X^{(n)}}$ are onto maps.
By the work of Alexandrov and convergence of Poincar\'e metrics (Theorems \ref{convergence} and Theorem \ref{poincare}), we see that the restriction $\phi|_{U}$ is an isometry from $(U, d^U)$ into the component of  $\partial C_K(Y) $ which contains $(0,0,0)$.
Using $\phi(U \cup X)=\partial C_K(Y) \cup Y$,  $\phi(X)=Y$ and $\phi(U) \subset \partial C_K(Y)$, we see that $\phi(U) =\partial C_K(Y)$. In particular, the map $\phi|_U$ is an isometry from $(U, d^U)$ onto $\partial C_K(Y)$. Since $U$ is connected, we see $\partial C_K(Y)$ is connected.




\end{proof}

\subsection{Finishing the proof of part (a) of Theorem \ref{1.222}}

Now take a connected component $Y_k$ of $Y$. To show it is a round disk or a point, we use 
Proposition \ref{512} to find  $Y_k$ =$\phi(X_k)$ for some connected component $X_k$ of $X$.  Indeed since $\phi$ is onto, there exists a connected component, say $X_k$ of $X$, which is mapped by $\phi$ into $Y_k$.  Since $\phi|_U$ is a homeomorphism from $U$ to $\partial C_K(Y)$ and  $\phi|_U$ induces bijection on spaces of ends (see \cite{richards}),  we have $\phi(X_k) =Y_k$.   Since $X$ is a circle type closed set, there exists a sequence $X^{(n)}_{k_n}$ of components of $X^{(n)}$ converging in Hausdorff distance to $X_k$.  By the uniform convergence of $\phi_n$ to $\phi$ and Lemma \ref{456},
$\phi_n(X^{(n)}_{k_n})$ converges in Hausdorff distance to $\phi(X_k)=Y_k$.  But  each $\phi_n(X^{(n)}_{k_n})$ is a component of $Y^{(n)}$ which is a round disk or a point. Therefore $Y_k$ is a round disk or a point.

\section{Proof of  part (b) of Theorem \ref{1.222} assuming equicontinuity}
Recall Theorem \ref{1.222} (b) states that for any circle type close set $Y \subset \SS ^2$ with $|Y| \geq 3$, there exists a circle domain $U=\hat{\C}-X$ with Poincar\'e metric $d^U$ such that the boundary of the hyperbolic convex hull $\partial C_P(Y)$ is isometric to $(U, d^U)$.  The basic
strategy of the proof is the same as that in Theorem \ref{1.222} (a). 
 In this section, we prove Theorem \ref{1.222} (b) using an equicontinuity property which will be established in \S9.

We will use the Poincar\'e ball model $(\H^3,d^P)$ of the hyperbolic 3-space in the rest of the section unless mentioned otherwise.  By Theorem \ref{2.1}, we may assume that the set $Y$ is not contained in any circle, i.e., $C_P(Y)$ is  3-dimensional.
Composing with a M\"obius transformation of $\partial \H^3$, we may assume that $(0,0,0) \in \partial C_P(Y)$.  By Carath\'eodory's theorem on convex hull (Proposition B.6 in \cite{bertsekas}), there exist four components $Y_1, .., Y_4$ of $Y$ such that $(0,0,0) \in C_P(\cup_{i=1}^4 Y_i)$. Since $Y$ is a circle type closed set, there exists a sequence $\{Y_1, ..., Y_n, ...\}$ of components of $Y$ such that $\cup_{i=1}^{\infty}Y_i$ is dense in $Y$.  Note that the density implies each disk component of $Y$ is in the sequence. Define $Y^{(n)}=\cup_{i=1}^n Y_i$ and $\Sigma_n =\partial C_P(Y^{(n)})$. By construction $(0,0,0) \in \partial C_P(Y^{(n)})$ for $n \geq 4$ and the sequence $\{Y^{(n)}\}$ converges in Hausdorff distance to $Y$ in $\overline{\H^3}$.

Since  $\Sigma_n$ is a genus zero Riemann surface of a finite topological type, by Koebe's circle domain theorem, there exists a circle domain $  U_n =\hat{\C}-X^{(n)}$ and a conformal diffeomorphism $\phi_n:\Sigma_n \to   U_n$. Using  M\"obius transformations, we normalize $  U_n$ such that
$0\in   U_n$, $\phi_n(0,0,0)=0$ and the open unit disk $\D$ is a maximum disk contained in $U_n$, i.e., $X^{(n)} \subset \hat \C-\D$ and $X^{(n)} \cap \partial \D \neq \emptyset$.
 By taking a subsequence if necessary, we may assume that $X^{(n)}$ converges in Hausdorff distance to a compact set $X$ in $\hat \C$ such that  $X \subset \hat \C-\D$ and $X \cap \partial \D \neq \emptyset$.     Our goal is to prove that $U=\hat \C -X$ is a circle domain and $(U, d^U)$ is isometric to $\partial C_P(Y)$           


\begin{lemma} \label{8.111}
The hyperbolic injectivity radii of the surfaces $(\Sigma_n, d_{\Sigma_n}^P) $ at (0,0,0) are bounded away from zero.
\end{lemma}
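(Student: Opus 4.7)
The plan is a proof by contradiction. Assume there are homotopically non-trivial loops $\gamma_n$ based at $(0,0,0)$ in $(\Sigma_n, d^P_{\Sigma_n})$ whose intrinsic lengths tend to $0$. Since intrinsic length dominates the ambient $\H^3_P$-length and the Poincar\'e and Euclidean metrics are bi-Lipschitz near the origin of the ball, $\gamma_n$ collapses in Hausdorff distance to $\{(0,0,0)\}$ in $\overline{\H^3}$. Each $\gamma_n$ partitions the topological $2$-sphere $\Sigma_n \cup Y^{(n)}$ into two disks, splitting $Y^{(n)}$ into non-empty subsets $F_n, G_n$. Because $\{Y_1, \ldots, Y_4\} \subset Y^{(n)}$ for $n \geq 4$, by pigeonhole and subsequencing I may assume the induced partition of $\{Y_1, \ldots, Y_4\}$ is constant in $n$, giving two cases.

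If two of the four, say $Y_i \in F_n$ and $Y_j \in G_n$, sit on opposite sides of $\gamma_n$ (Case~I), I use that the limit surface $\Sigma = \partial C_H(Y)$ is a connected complete hyperbolic surface containing $(0,0,0)$ as a regular interior point and having $Y_i, Y_j$ among its ends. Removing an open topological disk around $(0,0,0)$ from $\Sigma$ leaves it connected, so I can choose a continuous arc $\alpha \subset \Sigma \cup Y$ from $Y_i$ to $Y_j$ avoiding a fixed Euclidean ball $B$ around $(0,0,0)$. By Alexandrov convergence $\Sigma_n \to \Sigma$ (Theorem~\ref{convergence}) together with Hausdorff convergence $Y^{(n)} \to Y$, for $n$ large I lift $\alpha$ to an arc $\alpha_n \subset \Sigma_n \cup Y^{(n)}$ from $Y_i$ to $Y_j$ still avoiding a slightly smaller ball $B' \subset B$ which eventually contains $\gamma_n$. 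Then $\alpha_n \cap \gamma_n = \emptyset$, contradicting the topological separation of $Y_i, Y_j$.

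If all four lie on one side (Case~II, say in $F_n$), pick $Y_{k_n} \in G_n$ with $k_n \geq 5$ and pass to a subsequence so that $Y_{k_n}$ Hausdorff-converges to a non-empty $Y_\infty \subset Y$. If some $y_\infty \in Y_\infty$ lies in a component $Y_*$ of $Y$ distinct from $Y_1, \ldots, Y_4$, then $Y_*$ is a genuine end of $\Sigma$ and the Case~I argument applies to $(Y_1, Y_*)$, approximating the $Y_*$-endpoint by $Y_{k_n}$ which eventually enters any Hausdorff neighbourhood of $Y_*$. The remaining subcase, $Y_\infty \subset Y_1 \cup \cdots \cup Y_4$, is the main obstacle: here some $Y_{k_n}$ accumulate onto, say, $Y_1$, and the short collar realising $\gamma_n$ between the $Y_1$- and $Y_{k_n}$-funnels on $\Sigma_n$ must be shown to sit near their common ideal boundary rather than at $(0,0,0)$. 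I rule this out by exploiting $(0,0,0) \in C_H(Y_1 \cup \cdots \cup Y_4)$, which places $(0,0,0)$ on a supporting hyperbolic plane of $C_H(Y^{(n)})$ having $Y_1, \ldots, Y_4$ on one side and thus geometrically separating $(0,0,0)$ from the nested funnels for $n$ large, forcing a contradiction with $(0,0,0) \in \gamma_n$.
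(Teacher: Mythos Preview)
Your approach is very different from the paper's and, as written, has a real gap in the final subcase.

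The paper's proof is a three-line convex-geometry argument. Pass to the Klein model via the bi-Lipschitz isometry $\Psi$, so that $C_K(Y^{(n)})$ are \emph{Euclidean} convex bodies in the closed unit ball. Since $Y$ is not contained in a circle, $C_K(Y)$ is 3-dimensional and hence contains a fixed Euclidean ball $B$; by Hausdorff convergence, $B \subset C_K(Y^{(n)})$ for all large $n$. Any family of Euclidean convex surfaces in a bounded region enclosing a common ball has uniformly bounded Euclidean intrinsic injectivity radius (radial projection from the center of $B$ is uniformly bi-Lipschitz onto a fixed sphere). Since $d^K \ge d^E$, the same lower bound holds for the hyperbolic injectivity radius. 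That is the entire argument.

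Your topological argument is salvageable in places but not as written. In Case~I, the ``lifting'' of a path $\alpha \subset \Sigma \cup Y$ to $\alpha_n \subset \Sigma_n \cup Y^{(n)}$ is not a consequence of Alexandrov's Theorem~\ref{convergence}, which only gives convergence of distances, not of paths. To lift paths you need a sequence of homeomorphisms $\Sigma_n \cup Y^{(n)} \to \Sigma \cup Y$ converging to the identity; the natural way to produce these is radial projection from a common interior point of the convex bodies---which is exactly the fixed ball $B$ above. So the key geometric input of the paper's proof is implicitly needed anyway.

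The genuine gap is your last subcase of Case~II, where $Y_{k_n}$ accumulates onto (say) $Y_1$. Your claim that ``$(0,0,0) \in C_H(Y_1 \cup \cdots \cup Y_4)$ places $(0,0,0)$ on a supporting hyperbolic plane of $C_H(Y^{(n)})$ having $Y_1,\ldots,Y_4$ on one side and thus geometrically separating $(0,0,0)$ from the nested funnels'' does not do any work: any supporting plane of $C_H(Y^{(n)})$ at $(0,0,0)$ has \emph{all} of $Y^{(n)}$ on one side, including both $Y_1,\ldots,Y_4$ and $Y_{k_n}$, so it separates nothing relevant. No contradiction with $(0,0,0) \in \gamma_n$ follows. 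What would actually work here is the same reduction you used in the previous subcase: take a fixed path in $\Sigma \cup Y$ from $Y_1$ to $Y_2$ avoiding a ball around the origin, lift it to $\Sigma_n \cup Y^{(n)}$, and then prepend a short arc from $Y_{k_n}$ to the lifted starting point near $Y_1$ (possible since $Y_{k_n} \to Y_1$). This gives a path from $Y_{k_n} \in G_n$ to $Y_2 \in F_n$ disjoint from $\gamma_n$, the desired contradiction. But that is not the argument you gave, and once you set up the machinery to do it rigorously you have essentially reproduced the fixed-ball input of the paper's much shorter proof.
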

\begin{proof} Let $S_n=\partial C_K(Y^{(n)})$ be the corresponding surface in the Klein model and $d^K_{S_n}$ be the induced path metric on $S_n$.  
The isometry $\Psi(x)=\frac{2x}{1+|x|^2}$ from $d^P$ to $d^K$ induces an isometry from $(\Sigma_n, d_{\Sigma_n}^P) $ to $(S_n, d^K_{S_n})$. Since $\Psi(0)=0$, we will prove the result for $(S_n, d^K_{S_n})$ at $(0,0,0)$.
In the Klein model, both $C_K(Y^{(n)})$ and $S_n$ are a Euclidean convex body and an Euclidean convex surface, respectively.  
By the assumption that $Y$ is not in a circle, the convex set $C_K(Y)$ is 3-dimensional and contains an Euclidean ball. It follows that there exists an Euclidean ball $B$ which is contained in $C_K(Y^{(n)})$ for all large $n$.  This implies that the Euclidean injectivity radii of $S_n=\partial C_K(Y^{(n)})$ in the path metric $d^E_{S_n}$ at $(0,0,0)$ are bounded away from zero. Indeed, if otherwise, we find a sequence of homotopically non-trivial loops $\delta_{n_k}$ based at $(0,0,0)$ in $\partial C_K(Y^{(n_k)})$ such that the lengths of $\delta_{n_k}$  tend to zero. Then the 3-dimensional convex bodies $C_K(Y^{(n_k)})$ will converge in Hausdorff distance to a 1-dimensional convex set. This contradicts the fact that $C_K(Y^{(n_k)})$ contains $B$ for all $k$. Since   $d^K \geq d^E$ and $B_r(p, d^K) \subset B_r(p, d^E)$, 
the injectivity radii of $(S_n, d^K_{S_n})$ are bounded away from $0$.
\end{proof}

\begin{lemma} \label{822}  The closed set $X$ contains at least three points, i.e., $|X| \geq 3$.
Furthermore,  for any $r>0$, there exists $r'>0$ such that the spherical ball $B_{r'}(0, d^{\SS })$ is contained in $B_r(0, d_n)$
in $U_n$ for all $n$ where $d_n =a_n(z)|dz|$ is the Poincar\'e metric on $U_n$
 \end{lemma}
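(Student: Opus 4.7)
The plan is to establish the second assertion (the containment) first, directly from Schwarz--Pick, and then to use it in tandem with Lemma~\ref{8.111} to deduce $|X|\geq 3$ by contradiction, via a Poincar\'e-metric degeneration argument.

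\textbf{Containment.} Since $\D\subset U_n$, Schwarz--Pick (Lemma~\ref{332}) gives $d_n\leq d^{\D}$ on $\D$, and hence $a_n(z)\leq 2/(1-|z|^2)$. On $\{|z|\leq 1/2\}$ this yields the uniform bound $a_n\leq 8/3$, so $d_n(0,z)\leq (8/3)|z|$ there. Since the spherical and Euclidean metrics are comparable near~$0$, choosing $r'$ small enough that any spherical $r'$-ball around $0$ lies in $\{|z|\leq 1/2\}$ and has Euclidean radius at most $(3/8)r$ gives $B_{r'}(0,d^{\SS})\subset B_r(0,d_n)$ for all $n$.

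\textbf{Cardinality.} Suppose for contradiction $|X|\leq 2$. From the normalization $X^{(n)}\cap\partial\D\neq\emptyset$ and Hausdorff convergence we have $X\cap\partial\D\neq\emptyset$, hence $1\leq|X|\leq 2$. Through the conformal isometry $\phi_n:(\Sigma_n,d^P)\to(U_n,d_n)$ and Lemma~\ref{8.111}, the $d_n$-injectivity radius of $(U_n,d_n)$ at~$0$ has a uniform lower bound $r_0>0$. The strategy is to construct, for each large $n$, a non-contractible loop in $U_n$ based at~$0$ of $d_n$-length tending to $0$, contradicting this bound. In the case $|X|=2$, write $X=\{p,q\}$ and set $V_\epsilon:=\hat{\C}-N_\epsilon(\{p,q\},d^{\SS})$; for each fixed $\epsilon>0$ and $n$ large, $V_\epsilon\subset U_n$, so Schwarz--Pick gives $d_n\leq d^{V_\epsilon}$ on $V_\epsilon$. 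As $\epsilon\to 0$ the conformal modulus of the annulus $V_\epsilon$ diverges, so the Poincar\'e length of its core geodesic tends to $0$; a direct cross-ratio computation in the cylinder model shows that the cylinder coordinate of $0$ converges to the midpoint, so the $d^{V_\epsilon}$-distance from $0$ to the core also tends to~$0$. Concatenating produces a loop based at~$0$ which is non-contractible in $V_\epsilon$---and hence in $U_n$, since it encloses components of $X^{(n)}$ near~$p$---of $d^{V_\epsilon}$-length going to $0$, contradicting $r_0>0$ via $d_n\leq d^{V_\epsilon}$.

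\textbf{Main obstacle.} The case $|X|=1$ is the technical heart: here the natural comparison domain $\hat{\C}-N_\epsilon(p,d^{\SS})$ is simply connected and carries no non-contractible loops, so the above argument fails. The remedy is to exploit that $X^{(n)}$ has $n$ distinct components clustering near the single limit point~$p$: by partitioning these components into two (or more) clusters at an intermediate scale (between the intra-cluster diameter and the inter-cluster spacing) and taking neighborhoods of each cluster, one builds a multiply connected auxiliary domain $W_n\subset U_n$ whose Poincar\'e metric degenerates in the same way, producing the short non-contractible loop at~$0$. Calibrating the partition scale so that the resulting annular modulus diverges while the $d^{W_n}$-distance from $0$ to its core remains controlled is the main hurdle, and the argument must be combined with the comparison $d_n\leq d^{W_n}$ on $W_n$ to transfer the short loop back to $(U_n,d_n)$.
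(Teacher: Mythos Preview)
Your containment argument is correct and in fact more elementary than the paper's: the paper deduces it from the Koebe quarter theorem applied to the developing map of the embedded hyperbolic $r_0$-disk, whereas your direct Schwarz--Pick bound $a_n\leq 8/3$ on $\{|z|\le 1/2\}$ does the job immediately.

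The gap is in the $|X|=1$ case of your cardinality argument, and it is a real one, not merely a matter of filling in details. Your scheme requires an auxiliary domain $W_n\subset U_n$ (so that $d_n\le d^{W_n}$) which is a topological annulus of \emph{diverging} modulus whose core passes near $0$. But to have $W_n\subset U_n$ you must cover \emph{all} of $X^{(n)}$ by the two removed disks, and there is no reason the components of $X^{(n)}$ admit a two-cluster covering with diverging separation-to-size ratio. For a concrete obstruction, take $X^{(n)}$ to consist of $n$ equal disks of radius $\delta$ with centers spaced $3\delta$ apart along a short segment near $p$: any two-ball covering $B_1\cup B_2\supset X^{(n)}$ has $\mathrm{dist}(B_1,B_2)$ comparable to $\max(r_1,r_2)$, so the modulus of $\hat\C-B_1-B_2$ is bounded independently of $\delta$ and $n$, and the resulting loop has $d^{W_n}$-length bounded below. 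Since you do not control how Koebe's uniformization arranges $X^{(n)}$, this configuration cannot be excluded, and the short-loop strategy stalls.

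The paper avoids this entirely by using the injectivity-radius input from Lemma~\ref{8.111} in a different way. It shows directly that $a_n(0)\to 0$ whenever $|X|\le 2$: after a M\"obius map sending $a\mapsto 0$, $b\mapsto\infty$, $0\mapsto 1$, one has $f(X^{(n)})$ eventually contained in $\{|w|<1/R_n\}\cup\{|w|>R_n\}$ with $R_n\to\infty$, and Schwarz--Pick against the annulus $\{1/R_n<|w|<R_n\}$ (whose Poincar\'e density at $1$ is $\pi/(2\log R_n)$) gives $a_n(0)\to 0$. This argument is insensitive to whether $|X|=1$ or $|X|=2$. Then, writing $f_n:B_{r_0}(0,d^P)\to B_{r_0}(0,d_n)$ for the isometric embedding coming from Lemma~\ref{8.111}, one has $|f_n'(0)|=2/a_n(0)\to\infty$, and the Koebe quarter theorem forces $B_{r_0}(0,d_n)\subset U_n$ to contain a Euclidean disk of radius exceeding $1$, contradicting $X^{(n)}\cap\partial\D\neq\emptyset$. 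I would recommend replacing your short-loop construction by this density-degeneration argument; it is shorter and handles both cases at once.
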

\begin{proof}  
If $X$ contains at most 2 points, say $X\subset \{a,b\}\subset \hat\C -\D$. Then for any $\epsilon>0$, $X^{(n)}\subset B_{\epsilon}(a, d^{\SS }) \cup B_{\epsilon}(b, d^{\SS })$ for sufficiently large $n$. We  claim that
\begin{equation} \label{limi} \lim_na_n(0)=0. \end{equation}
Assume $f$ is the M\"obius transformation on $\hat\C$ such that $f(a)=0,f(b)=\infty, f(0)=1$, and $b_n(z)|dz|$ is the Poincar\'e metric on $f(U_n)=\hat\C-f(X^{(n)})$. Then $a_n(0)=b_n(1)|f'(0)|$.  We will show that  $\lim_n b_n(1) =0$. Consider  a sequence of rings $A_{R_n}= \{z| R_n^{-1} < |z| < R_n\}$ with $\lim_n R_n =\infty$ such that $ A_{R_n} \cap  f(X^{(n)}) =\emptyset$.  Let the Poincar\'e metric on $A_R =\{ z \in \C| |R^{-1} <|z| < R\}$ be $c_{R}(z)|dz|$.  Then by the Schwarz-Pick lemma $c_{R_n}(z)\geq b_n(z)$.  It is well known  (\cite{beardon}, page 49) that
$$
c_R(1)=\frac{\pi}{2\log R}.
$$
 It follows $|b_n(1)| \leq \frac{\pi  }{2 \log R_n}$ and hence $\lim_n a_n(0)=\lim_n b_n(1) |f'(0)|=0$.

On the other hand,
since $(\Sigma_n,d_{\Sigma_n}^P)$ and $( U_n,d_n)$ are isometric,  by Lemma \ref{8.111},  there exists a hyperbolic ball $B_r(0, d_n)$  of radius $r>0$ centered at $0$ in $U_n$ such that $B_r(0, d_n)$ is isometric to the standard ball $B_r(0, d^P)$ for all $n$. Let $f_n$ be an orientation preserving isometry from the ball $B_r(0, d^P)$  in the Poincar\'e disk  to $B_r(0,d_n)$  in $(U_n, d_n)$.  Since $f_n$ is an isometry, we have $\frac{2|dz|}{1-|z|^2} =a_n(f_n(z))|f_n'(z)|$. This shows $|f_n'(0)|=2/a_n(0)$ and by (\ref{limi}),  $|f_n'(0) |\to \infty$. Recall  Koebe's quarter theorem says that if $g:  B_r(0, d^E) \to \C$ is an injective analytic map, then its image $g(B_r(0, d^E))$ contains the Euclidean ball of radius $\frac{|g'(0)|r}{4}$ centered at $g(0)$.  Applying it to the injective analytic maps $f_n$ defined on $B_r(0, d^P)$, we see that $f_n(B_r(0, d^P))$ contains the Euclidean disk of radius 2 centered at $0$ for $n$ large. This contradicts the assumption  that $\partial \D \cap \partial U_n \neq \emptyset$ and shows
$|X| \geq 3$.

Finally to see the second part of the Lemma, by the Schwarz-Pick lemma applied to $\D \subset U_n$,  we have $\frac{2|dz|}{1-|z|^2} \geq a_n(z)|dz|$ and in particular $a_n(0) \leq 2$.   Therefore, $|f_n(0)| \geq 1$. Then Koebe's quarter Theorem implies that $f_n(B_r(0, d^P))$ contains $B_{r'}(0, d^E)$ for some $r'$ independent of $n$. Since $d^E$ and $d^{\SS }$ are bi-Lipschitz equivalent when restricted to $\D$, the result follows.


\end{proof}

Now we prove Theorem \ref{1.222}(b). The key result used in the proof is the following equicontinuity theorem
to be proved in \S9.
\begin{theorem}\label{equicont1} The sequence $\{\phi_n: (\partial C_P(Y^{(n)}), d^{E}) \to (U_n, d^{\SS })\}$  is equicontinuous.
 \end{theorem}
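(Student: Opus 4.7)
The plan is to mirror the proof of Theorem \ref{equicont} from §7.2, interchanging the roles of the convex-surface side and the $\hat{\mathbb{C}}$ side and invoking the dual extremal-length estimates of §6. Suppose the conclusion fails, so that for some $\epsilon_0 > 0$ there are $x_n, x_n' \in \Sigma_n = \partial C_H(Y^{(n)})$ with $d^E(x_n, x_n') \to 0$ and $d^{\mathbb{S}}(\phi_n(x_n), \phi_n(x_n')) \geq \epsilon_0$. After passing to a subsequence, $x_n, x_n' \to p \in \overline{\H^3}$; since $Y^{(n)} \to Y$ in Hausdorff distance, $p \in \partial C_H(Y) \cup Y$. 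First I would eliminate the case $p \in \partial C_H(Y) - Y \subset \H^3$: Alexandrov's Theorem \ref{convergence} then forces $d^P_{\Sigma_n}(x_n, x_n') \to 0$, and because $\phi_n$ is a conformal diffeomorphism between complete hyperbolic surfaces it is an isometry for the Poincaré metrics, so $d_n(\phi_n(x_n), \phi_n(x_n')) \to 0$. Combining this with the fact that $d_n(0, \phi_n(x_n))$ stays bounded (again by Theorem \ref{convergence}) and applying Lemma \ref{822} together with Theorem \ref{poincare}, one sees that both $\phi_n(x_n)$ and $\phi_n(x_n')$ must converge to a common point of $U$, contradicting the uniform $d^{\mathbb{S}}$-separation. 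Hence $p$ lies in a connected component $Y_* \subset Y$, which is either a round spherical disk or a single point.

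With $p \in Y_*$ fixed, I introduce the transboundary curve family $\Gamma_n$ on $\Sigma_n^{Y^{(n)}}$ consisting of simple loops (or, when $Y_*$ is a disk, simple arcs ending at $[Y_*]$) that separate $\{x_n, x_n'\}$ from an intrinsic neighborhood $K_0^{(n)}$ of $(0,0,0) \in \Sigma_n$ whose Euclidean diameter is bounded below uniformly in $n$. Its $\phi_n$-image $\Gamma_n'$ lives in $\hat{\mathbb{C}}^{X^{(n)}}$ and separates the corresponding sets; the conformal invariance Lemma \ref{conf-inv} yields $EL(\Gamma_n) = EL(\Gamma_n')$. To drive $EL(\Gamma_n) \to 0$, pick concentric spherical circles $C_{R_i}$ around $p$ with $R_{i+1} < R_i / 2$ and, using the density of $\{Y_i\}$ in $Y$ and the circle-type property, adapt Lemma \ref{6517} so that for large $n$ no component of $Y^{(n)} - Y_*$ meets two consecutive circles. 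The nearest-point projection $\pi_n \colon \overline{\H^3} \to \overline{C_H(Y^{(n)})}$, controlled by Lemmas \ref{3.6}--\ref{3.7}, transports each spherical annulus $\{R_{3i-1} < d^{\mathbb{S}}(\cdot, p) < R_{3i-2}\}$ into a disjoint annular region of $\Sigma_n$ satisfying the hypotheses of Proposition \ref{657}, whose transboundary extremal length is therefore bounded by $90000\pi$; summing via Lemma \ref{71}(b) over arbitrarily many such disjoint annuli gives $EL(\Gamma_n) \to 0$.

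The matching lower bound $\liminf_n EL(\Gamma_n') > 0$ is the main obstacle. Using the normalization $\mathbb{D} \subset U_n$ and the Koebe-quarter argument of Lemma \ref{822}, $\phi_n(K_0^{(n)})$ contains a uniform spherical ball around $0$, so $\phi_n(K_0^{(n)})$ and $\{\phi_n(x_n), \phi_n(x_n')\}$ are $d^{\mathbb{S}}$-separated by some $c > 0$ independent of $n$. In the style of Proposition \ref{8.2}, equip $\hat{\mathbb{C}}^{X^{(n)}}$ with the extended metric $(\rho\, d^{\mathbb{S}}, \mu)$ supported on a fixed spherical annulus lying between these two sets, with $\rho \equiv 1$ on its planar part and $\mu([W]) = \mathrm{diam}_{d^{\mathbb{S}}}(W \cap \mathrm{annulus})$ on each component $W$ of $X^{(n)}$; the estimate that twice a spherical area dominates the square of a spherical diameter bounds the total area uniformly. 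Every curve in $\Gamma_n'$, closed up by shortest spherical geodesics in the components of $X^{(n)}$ it traverses (and, when $Y_*$ is a disk, also through the spherically convex disk $X_* := \phi_n(Y_*)$, as in the disk subcase of §7.2.2), becomes a spherical path joining two sets $d^{\mathbb{S}}$-separated by $c$, so its $m$-length is $\geq c$, giving $EL(\Gamma_n') \geq c^2/A > 0$ and contradicting $EL(\Gamma_n) \to 0$. The disk-versus-point dichotomy for $Y_*$ is the principal structural subtlety, handled exactly as in §7.2 by terminating arcs at $[Y_*]$ on the $\Sigma_n$ side and at $[X_*]$ on the $\hat{\mathbb{C}}$ side and invoking spherical convexity of $X_*$.
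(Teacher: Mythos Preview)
Your overall architecture matches the paper's: suppose failure, show the limit point $p$ must lie in a component $Y_*$ of $Y$, build separating transboundary families $\Gamma_n$ on the convex-surface side and $\Gamma_n'$ on the $\hat{\mathbb C}$ side, then force $EL(\Gamma_n)\to 0$ via Proposition~\ref{657} and Lemmas~\ref{3.6}--\ref{3.7} while keeping $EL(\Gamma_n')$ bounded below. Two of your steps, however, are not as written.

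\medskip
\textbf{Ruling out $p\in\partial C_H(Y)$.} Your route---show $d_n(0,\phi_n(x_n))$ is bounded and then invoke Lemma~\ref{822} and Theorem~\ref{poincare} to conclude $\phi_n(x_n),\phi_n(x_n')$ converge to a common point of $U$---is not justified: Lemma~\ref{822} gives only the inclusion $B_{r'}(0,d^{\mathbb S})\subset B_r(0,d_n)$, which is the wrong direction, and Theorem~\ref{poincare} applies only once you already know the points sit in a fixed compact subset of $U$. The paper closes this gap with a direct inequality (its Lemma~9.1): choosing $u_i^{(n)}\in X^{(n)}$ converging to three points of $X$ and comparing via Schwarz--Pick with the Poincar\'e metric of a thrice-punctured sphere gives a constant $C_0>0$ with $d_n\ge C_0\,d^{\mathbb S}$ on $U_n$. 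Then $d_n(\phi_n(x_n),\phi_n(x_n'))\to 0$ immediately contradicts $d^{\mathbb S}(\phi_n(x_n),\phi_n(x_n'))\ge\epsilon_0$. You need this comparison (or an equivalent one); without it the step is incomplete.

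\medskip
\textbf{The lower bound $\liminf_n EL(\Gamma_n')>0$.} Here your description goes astray in two places. First, restricting $\rho$ to a ``fixed spherical annulus lying between these two sets'' does not work: $\{\phi_n(x_n),\phi_n(x_n')\}$ can lie anywhere in $U_n\setminus\phi_n(D_n)$, and a separating loop in $\Gamma_n'$ can lie entirely outside any preassigned annulus, giving it $m$-length zero. The paper takes $\rho\equiv 1$ on all of $U_n$ and $\mu([X_i^{(n)}])=\operatorname{diam}_{d^{\mathbb S}}(X_i^{(n)})$, so that $A(m)\le 16\pi$. Second, curves in $\Gamma_n'$ are \emph{separating} loops (or arcs ending at $[X_*]$), not paths \emph{joining} the two sets; your sentence ``becomes a spherical path joining two sets $d^{\mathbb S}$-separated by $c$'' is the wrong picture. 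After replacing each $[X_i^{(n)}]$ by a short spherical geodesic inside $X_i^{(n)}$ (and, in the disk case, closing with a geodesic through the convex disk $X_*$), one obtains a genuine simple loop $\gamma^*\subset\hat{\mathbb C}$ that \emph{separates} $\phi_n(D_n)\supset B_r(0,d^{\mathbb S})$ from $\{\phi_n(x_n),\phi_n(x_n')\}$. The length lower bound then comes from the elementary fact that if a Jordan curve bounds two domains of spherical diameters at least $2r$ and $\epsilon_0$ respectively, its spherical length is bounded below by a constant depending only on $r$ and $\epsilon_0$. That is the argument you need here, not the Proposition~\ref{8.2} template.

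\medskip
A minor point: for the upper bound you quote $R_{i+1}<R_i/2$, but Proposition~\ref{657} requires the inner radius to be at most $1/100$ of the outer one, so you should take $R_{i+1}<R_i/100$ as the paper does.
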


Assuming the theorem, by equicontinuity, each map $\phi_n: \Sigma_n \to   U_n$ extends to a continuous map, still denoted by $\phi_n: \overline{\Sigma_n} \to \overline{  U_n}$ between their closures in $\R^3$ and $\hat{\C}$. Furthermore, the extended family $\{\phi_n: ( \overline{\Sigma_n}, d^E) \to (\overline{  U_n}, d^{\SS })\}$ is still equicontinuous.  Now each boundary component of $\Sigma_n$ and $  U_n$ is a round circle or a point. Use central extension to extend $\phi_n$ to be a continuous map,  still denoted by $\phi_n$, from $\Sigma_n \cup Y^{(n)}$ to $  U_n \cup X^{(n)} =\hat{\C}$.  By the normalization condition that $(0,0,0) \in \partial C_P(Y^{(n)})$ and 
$X^{(n)} \subset \hat \C -\D$, each component of $Y^{(n)}$ and $X^{(n)}$ has spherical radius at most $\pi/2$ for $n>4$.   Using  Corollary \ref{5.6} and a similar argument for Proposition \ref{512}, the extended family $\{ \phi_n: (\Sigma_n \cup Y^{(n)}, d^E) \to (\hat{\C}, d^{\SS })$\} is equicontinuous. 
Since $Y^{(n)}$ and  $\partial C_P(Y^{(n)})$ and converge to $Y$ and $\partial C_P(Y)$ in Hausdorff metrics respectively, it follows that their union
$\partial C_P(Y^{(n)}) \cup Y^{(n)}$ converges in Hausdorff metric to $\partial C_P(Y) \cup Y$. 
By the generalized Arzela-Ascoli Theorem \ref{aa},   we may assume, after taking a subsequence,  that  $\phi_n$ converges uniformly to a continuous map
 $\phi: \partial C_P(Y) \cup Y \to \hat{\C}$. Since $\phi_n(Y^{(n)})=X^{(n)}$, by Lemma \ref{456},
 $\phi(Y)=X$ and $\phi$ is onto.

 By Lemma \ref{822},  Alexandrov's convergence theorem \ref{convergence} and convergence theorem of Poincar\'e metrics (Theorem \ref{poincare}), we see that $\phi|_{\partial  C_P(Y)}$ is an isometric embedding of $\partial C_P(Y)$ into a component $\Omega'$ of $\hat{\C}-X$.  In particular,  $\phi(\partial C_P(Y)) \subset \hat{\C}-X$. Together with $\phi(Y)=X$ and that $\phi$ is onto, we see that $\phi(\partial C_P(Y)) =\hat{\C}-X$. Therefore, $\phi$ is an isometry from $\partial C_P(Y)$ to $\hat{\C}-X$.  Now we claim that $X$ is a circle type closed set. Indeed, by the same argument as in \S7.4,  each component $X_k$ of $X$ is of the form $\phi(Y_k)$ for some component $Y_k$ of $Y$.  By Lemma \ref{456} and that $Y$ is of circle type, each $\phi(Y_k)$ is the Hausdorff limit of a sequence of components $\phi_n(Y^{(n)}_{k_n}) =X^{(n)}_{k_n}$ of $X^{(n)}$. Therefore the result follows.



\section{Proof of the equicontinuity Theorem \ref{equicont1}}

Recall  Theorem \ref{equicont1} states,

  \begin{theorem*} The family $\{\phi_n: (\partial C_P(Y^{(n)}), d^{E}) \to (U_n, d^{\mathbf  S})\}$  is equicontinuous.
 \end{theorem*}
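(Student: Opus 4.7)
The strategy mirrors the proof of Theorem \ref{equicont} in Section 7.2, with the roles of the convex surface $\Sigma_n = \partial C_H(Y^{(n)})$ and the planar domain $U_n$ interchanged. Suppose the conclusion fails: after passing to a subsequence, there exist $\epsilon_0 > 0$ and $x_n, x_n' \in \Sigma_n$ with $d^E(x_n, x_n') \to 0$ but $d^{\mathbb S}(\phi_n(x_n), \phi_n(x_n')) \geq \epsilon_0$, and $x_n, x_n' \to p \in \overline{\H^3}$. Since $x_n \in C_H(Y^{(n)})$ and $Y^{(n)} \to Y$ in Hausdorff distance, necessarily $p \in \H^3 \cup Y$. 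I would first rule out the case $p \in \H^3$: since $\phi_n$ is an isometry from the intrinsic hyperbolic metric $(\Sigma_n, d^P_{\Sigma_n})$ to the Poincar\'e metric $(U_n, d_n)$, Alexandrov's convergence Theorem \ref{convergence} yields $d^P_{\Sigma_n}(x_n, x_n') \to 0$ and keeps $d^P_{\Sigma_n}((0,0,0), x_n)$ bounded, so $d_n(\phi_n(x_n), \phi_n(x_n')) \to 0$ and $d_n(0, \phi_n(x_n))$ is bounded. Using Schwarz--Pick against a fixed three-point Poincar\'e metric (available since $|X| \geq 3$ by Lemma \ref{822}), the Poincar\'e metric of $U_n$ blows up uniformly near its boundary, so $\phi_n(x_n), \phi_n(x_n')$ stay in a fixed compact subset of $U$; the uniform-on-compacts convergence $d_n \to d^U$ from Theorem \ref{poincare} together with local comparability of $d^U$ and $d^{\mathbb S}$ then gives $d^{\mathbb S}(\phi_n(x_n), \phi_n(x_n')) \to 0$, a contradiction. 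Hence $p \in Y$; let $Y_*$ be the component of $Y$ containing $p$.

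Next, I would establish the analog of Lemma \ref{6517}: for every $r > 0$ there exist $r' < r/100$ and $N$ such that for $n \geq N$, no component of $Y^{(n)} - Y_*$ meets both spherical circles $C_r, C_{r'}$ centered at $p$. The proof is verbatim, relying only on disjointness of components of the circle type set $Y$ and compactness of Hausdorff limits. Choose nested radii $R_0 > R_1 > \dots$ and indices $N_i$ with $R_{3i-2} > 100 R_{3i-1}$ satisfying this separation property. For $n$ large, $p \in Y^{(n)}\cap\partial\H^3$ and $(0,0,0) \in \overline{C_H(Y^{(n)})}$, so Lemma \ref{3.7} with $q = p$ verifies hypothesis (\ref{6571}) of Proposition \ref{657} on each annular region between $C_{R_{3i-2}}$ and $C_{R_{3i-1}}$. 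Let $K_n \subset \Sigma_n$ be the intrinsic hyperbolic ball of a fixed injectivity radius $r_0$ about $(0,0,0)$ (positive by Lemma \ref{8.111}); then $\phi_n(K_n) = B_{r_0}(0, d_n)$ contains a fixed spherical ball by Lemma \ref{822}. Applying Proposition \ref{657} on the $j$ annular regions corresponding to $i = 1, \dots, j$ (with the $A_i''$-modification of Section 7.2.1 when $Y_*$ is a disk, using that $\phi_n(Y_*)$ is itself a disk component $X_*^{(n)}$ of $X^{(n)}$) yields disjointly supported curve families $G_i$ on $\Sigma_n^{Y^{(n)}}$, each separating $\{x_n, x_n'\}$ from $K_n$ with $EL(G_i) \leq 90000\pi$. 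Lemma \ref{71}(b) then gives the family $\Gamma_n$ of all such curves satisfying $EL(\Gamma_n) \leq 90000\pi / j$; choosing $j$ large and $n \geq N_{3j}$ makes this arbitrarily small.

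On the planar side, conformal invariance (Lemma \ref{conf-inv}) gives $EL(\Gamma_n') = EL(\Gamma_n)$, where $\Gamma_n' = \phi_n(\Gamma_n)$ is a family of curves in $U_n^{X^{(n)}}$ separating $\{\phi_n(x_n), \phi_n(x_n')\}$ from $\phi_n(K_n)$. I would show $\liminf_n EL(\Gamma_n') > 0$ using the extended metric $m_n' = (d^{\mathbb S}, \nu_n)$ on $\hat\C^{X^{(n)}}$ with $\nu_n([X_i^{(n)}]) = \mathrm{diam}_{d^{\mathbb S}}(X_i^{(n)})$: the total area of $m_n'$ is bounded by a universal constant thanks to the spherical inequality $\mathrm{diam}^2 \leq 2\,\mathrm{area}$ for round disks. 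If $l_{m_n'}(\gamma_n) \to 0$ for some minimizing sequence $\gamma_n \in \Gamma_n'$, then replacing each visited end $[X_i^{(n)}]$ by a shortest spherical geodesic inside the spherically convex disk $X_i^{(n)}$ produces an honest closed curve $\tilde \gamma_n \subset \mathbb S^2$ with $l^{\mathbb S}(\tilde \gamma_n) \to 0$, still separating $\{\phi_n(x_n), \phi_n(x_n')\}$ from $\phi_n(K_n)$. A Hausdorff-convergent subsequence yields a single point $\tilde p \in \mathbb S^2$ that separates two sets of spherical diameter bounded below by $\epsilon_0$ and by Lemma \ref{822}, respectively; this is impossible because $\mathbb S^2 \setminus \{\tilde p\}$ is connected. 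The contradiction with $\lim_n EL(\Gamma_n) = 0$ completes the proof.

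The main technical obstacle I anticipate is the case where $Y_*$ is a disk, forcing $\Gamma_n$ and $\Gamma_n'$ to consist of arcs with endpoints at the marked ends $[Y_*]$ and $[X_*^{(n)}]$ rather than genuine loops. This requires the $A_i''$-type modification and the Lemma \ref{71}(c)-style reduction from Section 7.2.1 on the upper-bound side, and on the lower-bound side one must also close arcs in $\Gamma_n'$ via a spherical geodesic inside the convex disk $X_*^{(n)}$, using a chord-vs-arc inequality of the form $l^E \leq l^{\mathbb S} \leq (\pi/2)\,l^E$ on spherical disks of radius $\leq \pi/2$ to ensure the resulting closed curves still have vanishing $\mathbb S^2$-length.
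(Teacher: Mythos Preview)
Your overall architecture matches the paper's proof in Section~9 closely: the same contradiction setup, the same reduction to $q\in Y$, the same curve families $\Gamma_n,\Gamma_n'$, the same use of Proposition~\ref{657} for the upper bound and the spherical extended metric $(d^{\mathbb S},\mathrm{diam})$ for the lower bound. The lower-bound argument and the disk-case modifications you sketch are essentially those of Sections~9.1 and~9.2.

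There is, however, one genuine gap in your upper-bound step. You write ``For $n$ large, $p \in Y^{(n)}\cap\partial\H^3$'' and then apply Lemma~\ref{3.7} directly to the projection $\pi_n=\pi_{C_H(Y^{(n)})}$ in order to verify hypothesis~(\ref{6571}). But $p\in Y^{(n)}$ is \emph{not} guaranteed when $Y_*=\{p\}$ is a point component: the enumeration $\{Y_i\}$ only has $\bigcup_i Y_i$ dense in $Y$, and while every disk component must appear (by density), a given point component need not. If $p\notin Y^{(n)}$ then $p\notin\overline{C_H(Y^{(n)})}$ and Lemma~\ref{3.7} does not apply to $\pi_n$ at all; the constants $8$ and $1/8$ are simply unavailable for $\pi_n$.

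The paper repairs exactly this point by a two-step argument you omit: apply Lemma~\ref{3.7} to $\pi=\pi_{C_H(Y)}$ (where $q=p\in Y$ is automatic), and then invoke Lemma~\ref{3.6} to obtain $d^E(\pi(x),\pi_n(x))<R_{3j}$ for all $x\in\mathbb S^2$ once $n$ is large (this is condition~(4) in Section~9.2). Combining the two yields, for $x\in D_{R_{3i-1}}$,
\[
d^E(\pi_n(x),q)\le d^E(\pi(x),q)+d^E(\pi(x),\pi_n(x))\le 8R_{3i-1}+R_{3j}\le 9R_{3i-1},
\]
and the companion inequality on $E_{100R_{3i-1}}$, which is precisely hypothesis~(\ref{6571}) for $\pi_n$. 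Inserting this Lemma~\ref{3.6} step closes the gap; without it your verification of~(\ref{6571}) fails in the point case.
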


We prove the above theorem by deriving a contradiction.
Suppose otherwise,  there exist $\epsilon>0$ and two sequences $y_{k_n}, y_{k_n}' \in\Sigma_{k_n}:=\partial C_P(Y^{(k_n)})$ such that $|y_{k_n}-y_{k_n}'| \to 0$ and
$d^{\mathbf  S}(\phi_{k_n}(y_{k_n}),\phi_{k_n}(y_{k_n}'))>\epsilon. $ 
If the sequence $\{k_n\}$ is not bounded,  we may assume,  after taking a subsequence, that $k_n=n$.  If the sequence $\{k_n\}$ is bounded, we may assume after taking a subsequence that $k_n$ is a constant.  Below we will focus on the main case that $k_n=n$. The same proof also works for the simpler case that $k_n$ is a constant. We omit the details. Since the case $k_n$ is equivalent to the uniform continuity of $\phi_n$, we can also prove the uniform continuity of $\phi_n$ by using Caratheodory's extension theorem as discussed in \S7.2. 

Moving forward, we assume that $|y_n-y_n'|\to 0$, 
\begin{equation}\label{956} d^{\mathbf S}(\phi_{n}(y_{n}),\phi_{n}(y_{n}'))>\epsilon,  \end{equation}
and $\{y_n\}$ converges to some point $p \in \partial C_P(Y)\cup Y$. 
We claim that $q \in Y$. To see this, we need the following lemma. 



 \begin{lemma} \label{9.11} Let $d_n$ be the Poincar\'e metric on $U_n$. There exists a constant $C_0>0$  independent of $n$ such that
$$d_n(\phi_n(y_n),\phi_n(y_n'))\geq C_0 d^{\mathbf  S}(\phi_n(y_n),\phi_n(y_n')).$$
\end{lemma}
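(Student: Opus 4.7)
The plan is to dominate $d_n$ from below by the Poincar\'e metric of a thrice-punctured sphere that is uniformly comparable to the spherical metric. By Lemma \ref{822}, $|X|\geq 3$, so I can fix three distinct points $a_1,a_2,a_3\in X$. Hausdorff convergence $X^{(n)}\to X$ yields sequences $a_i^{(n)}\in X^{(n)}$ with $a_i^{(n)}\to a_i$; for all $n$ sufficiently large the triple $(a_1^{(n)},a_2^{(n)},a_3^{(n)})$ lies in a compact set of pairwise $\delta$-separated triples in $\hat\C$. Set $V_n=\hat\C-\{a_1^{(n)},a_2^{(n)},a_3^{(n)}\}$. Since $X^{(n)}\supset\{a_1^{(n)},a_2^{(n)},a_3^{(n)}\}$, we have $U_n\subset V_n$, so the Schwarz--Pick inequality (Lemma \ref{332}) gives
$$d_n(p,q)\ \geq\ d_{V_n}(p,q)\quad\text{for all }p,q\in U_n.$$

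The second step is to establish that there exists a constant $C_0>0$, independent of $n$, such that
$$d_{V_n}(p,q)\ \geq\ C_0\,d^{\SS}(p,q)\quad\text{for all }p,q\in V_n.$$
For a fixed thrice-punctured sphere $V=\hat\C-\{b_1,b_2,b_3\}$, the Poincar\'e density has the cusp asymptotic $\lambda_V(z)\sim 1/(|z-b_i|\log(1/|z-b_i|))$ near each $b_i$, which dominates the bounded spherical density $\rho(z)=2/(1+|z|^2)$; away from the punctures $\lambda_V$ is continuous and positive, hence bounded below on any compact subset of $V$. Thus $\lambda_V/\rho$ attains a positive infimum $C(V)>0$, yielding $d_V\geq C(V)d^{\SS}$. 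By Theorem \ref{poincare} the densities $\lambda_{V_n}$ converge uniformly on compact subsets to $\lambda_V$, and the cusp asymptotics depend continuously on the location of the punctures, so $C(V_n)\geq C_0$ uniformly once the triples are $\delta$-separated. Alternatively, one can pull back via a M\"obius normalization $m_n$ sending $(a_1^{(n)},a_2^{(n)},a_3^{(n)})\to(0,1,\infty)$, for which $m_n$ converges uniformly in the spherical metric, reducing everything to the universal thrice-punctured sphere $\hat\C-\{0,1,\infty\}$. Combining the two steps gives $d_n(\phi_n(x_n),\phi_n(x_n'))\geq C_0\,d^{\SS}(\phi_n(x_n),\phi_n(x_n'))$ for all large $n$; the finitely many remaining $n$ are absorbed by shrinking $C_0$.

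The main obstacle is securing the uniformity of the constant in the second step. On each individual $V_n$ the inequality is automatic, but a priori $C(V_n)$ could degenerate to $0$ as $n\to\infty$, either because two of the punctures collapse or because the cusp regions move around and the comparison constant drifts. The compactness of the set of pairwise $\delta$-separated triples, together with the continuous dependence of the Poincar\'e metric on the punctures furnished by Theorem \ref{poincare}, is what rules this out; the M\"obius-normalization viewpoint makes this transparent by reducing the whole question to the single explicit domain $\hat\C-\{0,1,\infty\}$ and the uniform convergence $m_n\to\mathrm{id}$ on compact sets of the sphere.
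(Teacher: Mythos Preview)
Your proof is correct and follows essentially the same approach as the paper: pick three points in $X$, lift them to points in $X^{(n)}$ via Hausdorff convergence, apply Schwarz--Pick (Lemma \ref{332}) to dominate $d_n$ by the Poincar\'e metric of the resulting thrice-punctured sphere $V_n$, and then use Theorem \ref{poincare} (convergence of Poincar\'e metrics) together with the cusp behavior to obtain the uniform lower bound $d_{V_n}\geq C_0\,d^{\mathbb S}$. Your discussion of the uniformity issue and the M\"obius-normalization alternative is slightly more explicit than the paper's, but the argument is the same.
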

\begin{proof} 
Since the sequence \{$X^{(n)}$\}  Hausdorff converges to $X$ and $|X| \geq 3$ (Lemma \ref{822}), we can choose a 3-point set $\{w_1, w_2, w_3\} \subset X$. 
 Let $W=\hat{\C}-\{w_1, w_2, w_2\}$ and $d^W =a(z)|dz|$ be the Poincar\'e metric on $W$.  Note that $a(z)$ tends to infinity as $z$ approaches $\partial W$ since each $w_i$ corresponds to a cusp end of $W$. Therefore, there exists a constant $C_W>0$ such that $d^W \geq C_W d^{\mathbf  S}$ on $\hat{\C}$.   Now take three points $u_i^{(n)} \in X^{(n)}$, $i=1,2,3$, such that $\lim_n u_i^{(n)} = w_i$.
 We claim that there exists a constant $C_0>0$ independent of $n$ such that
  \begin{equation}\label{9123} d^{V_n} \geq C_0 d^{\mathbf  S}, \end{equation}
   where $d^{V_n}$ is the Poincar\'e metric on $V_n=\hat{\C} -\{u_1^{(n)}, u_2^{(n)}, u_3^{(n)}\}$.
To see this, let $M_n$ be the Moebius transformation sending $u^{(n)}_i$ to $w_i$ for $i=1,2,3$. Then $M_n$ converges uniformly to the identity map in $(\hat{\C}, d^{\\S}).$ Then there exists a positive $C_0$ such that
$$d^{V_n}(z,w) =d^W(M_n(z), M_n(w)) \geq C_W d^{\SS}(M_n(z), M_n(w)) \geq C_0 d^{\SS}(z, w).$$
 Using the Schwarz-Pick lemma that  $d_n \geq d^{V_n}$, we see  $$d_n(\phi_n(y_n), \phi_n(y_n')) \geq d^{V_n}(\phi_n(y_n), \phi_n(y_n')) \geq C_0 d^{\mathbf  S}(\phi_n(y_n), \phi_n(y_n')).$$
\end{proof}

\begin{remark} We thank the referee for pointing out a gap in our original proof and for providing a new argument in proving (\ref{9123}). \end{remark}

Now we prove that $q \in Y$ by contradiction. If otherwise $q\in  \partial C_P(Y)$,    then $$d_{\Sigma_n}^P(y_n,y_n')=d_n(\phi_n(y_n),\phi_n(y_n'))\geq C_0 d^{\mathbf  S}(\phi_n(y_n),\phi_n(y_n'))>C_0\epsilon.$$ But  $\lim_n y_n=\lim_n y_n' =q \in \Sigma$, by Alexandrov's convergence Theorem \ref{convergence},
$
\lim_{n}d_{\Sigma_n}^P(y_n,y_n')= d_{\Sigma}^P(q,q)=0.
$
This contradicts that $d_{\Sigma_n}^P(y_n, y_n') \geq C_0\epsilon.$

Let $\delta_0\in(0,1)$ be a lower bound of the injectivity radii produced in Lemma \ref{8.111} such that
 $B_n=\{x\in\Sigma_n:d_{\Sigma_n}^P(x,0)\leq \delta_0\}$ is an embedded disk in $\Sigma_n$ for all large $n$.  By Lemma \ref{822} the image $\phi_n(B_n)$ contains a spherical ball $B_{\delta}(0, d^{\mathbf  S})$ for some radius $\delta>0$ independent of $n$. 
 Let $Y_*$ be the component of $Y$ which contains $q$ and let $\tilde \Sigma_n =\Sigma_n \cup Y^{(n)}$ which is a topological 2-sphere.
By the same argument as in \S7, we see that $\phi_n$ induces a conformal equivalence between  $\tilde \Sigma_n ^{Y^{(n)}}$ and $\hat \C^{ X^{(n)}}$ and a bijection between components of $X^{(n)}$ and $Y^{(n)}$. 
Construct two families of paths $\Gamma_n$ and $\Gamma_n'$ as follows.

If $Y_*$ is a single point, then
$ \Gamma_n$ is defined to be the set of simple loops in ${\tilde\Sigma_n}^{Y^{(n)}}$ separating $\{y_n,y_n'\}$ and $B_{n}$,
and
$\Gamma_n' $ is defined to be the set of simple loops in ${\hat{\C}}^{X^{(n)}}$ separating $\{\phi_n(y_n), \phi_n(y_n')\}$ and $\phi_n(B_{n})$.

If $Y_*$ is a round disk, then  by the construction of $Y^{(n)}$, $Y_*$ is a connected component of $Y^{(n)}$ for $n$ sufficiently large. Therefore $q \in \partial Y^{(n)}$ for $n$ large. We define
$ \Gamma_n$  to be the union of two family curves: simple loops $\gamma_1$ and simple arcs $\gamma_2$. Here $\gamma_1$ are simple loops in $\tilde{\Sigma}_n^{Y^{(n)}}-\{[Y_*]\}$ such that
$\gamma_1$ separate $\{y_n, y_n'\}$ and $B_n$ in $\tilde{\Sigma}_n^{Y^{(n)}}$, and $\gamma_2$ are simple arcs in $\tilde{\Sigma}_n^{Y^{(n)}}-\{[Y_*]\}$ with end points in $Y_*$ such that 
$\gamma_1 \cup \{[Y_*]\}$ is a simple loop separating $\{y_n, y_n'\}$ and $B_n$  in $\tilde{\Sigma}_n^{Y^{(n)}}$. We define 
$\Gamma_n'$ to be the image of $\Gamma_n$ under the conformal map $\phi_n$. 
The conformal invariance of extremal length implies that $EL(\Gamma_n) =EL(\Gamma_n')$.  We will derive a contradiction by showing that $\liminf_n EL(\Gamma_n') >0$ and  $\lim EL(\Gamma_n)=0$.

\subsection{Extremal length estimate I: $\liminf_n EL(\Gamma_n') >0$}
Consider the extended metric  $m=(d^{\mathbf  S}|_{U_n},\mu)$ on $\hat{\C}^{X^{(n)}}$ such that $\mu([X^{(n)}_i])=diam_{d^{\mathbf  S}}(X^{(n)}_i)$. Here $X^{(n)}_i$ is a component of $X^{(n)}$.

By definition,
$$
EL(\Gamma_n')\geq\frac{l_m(\Gamma_n')^2}{A(m)}.
$$
Now
$$
A(m)\leq Area_{d^{\mathbf  S}}(U_n)+\sum_{i}diam_{d^{\mathbf  S}}(X^{(n)}_i)^2
$$ $$\leq Area_{d^{\mathbf  S}}(U_n)+\sum_{i}4\pi Area_{d^{\mathbf  S}}(X_i^{(n)})=4\pi Area_{d^{\mathbf  S}}(\hat\C^2)=16\pi^2.
$$
It remains to show $l_m(\Gamma_n') >0$.

For any
$\gamma\in\Gamma_n'$, construct the new loop  $\gamma^* \subset \hat \C$ obtained by gluing to $\gamma \cap (\hat \C-X^{(n)})$ a  spherical geodesic segment of length at most $\mu([X_i^{(n)}])$ inside the $X_i^{(n)}$ for each intersection point  $[X_i^{(n)}]$ of
$[X^{(n)}]\cap \gamma$.
%
%
%
 Then by the construction of extended metric $m$,  $l_m(\gamma) \geq l_{d^{\mathbf  S}}(\gamma^*)$.  Note that $\phi_n(B_n)$ contains a spherical ball $B_{\delta}$ of radius $\delta$ and hence its spherical diameter is bounded below by $2\delta$. Also,   
 $d^{\mathbf  S}(\phi_n(y_n), \phi_n(y_n'))>\epsilon$ by (\ref{956}).

  If $Y_*$ is a single point, then
   $\gamma^*$ separates $\{\phi_n(y_n), \phi_n(y_n')\}$ from the  ball $\phi_n(B_n)$ in $\hat \C$. Hence, there exists $\delta'>0$ independent of $\gamma$ and $n$  such that \begin{equation}\label{delta}
   l_{d^{\mathbf  S}}(\gamma^*) \geq \delta'.\end{equation}  Indeed, consider the two Jordan domains $Q_1$ and $Q_2$ bounded by $\gamma^*$ in $\hat \C$. One of them contains a spherical ball of radius $\delta$, and the other contains two points of spherical distance $\epsilon$ apart. Therefore, the spherical diameters of $Q_1$ and $Q_2$ are at least $\delta''>0$ for some $\delta''$ independent of $\gamma$ and $n$. Now the length of $\gamma^*$ is at least $\min( diam_{d^{\mathbf  S}}(Q_1), diam_{d^{\mathbf  S}}(Q_2), \pi/2)$. Therefore, inequality (\ref{delta}) follows.


If $Y_*$ is a round disk, then for $n$ large,  by the construction, $Y_*$ is a component of $Y^{(n)}$. Let the corresponding component in $X^{(n)}$ be $X_*$. Then
the set $\gamma^* \cup X_*$ separates $\phi_n(B_n)$ and $\{\phi_n(y_n), \phi_n(y_n')\}$ in $\hat{\C}$.  
If $\gamma^*$ is a simple loop, $l_m(\gamma^*)\geq\delta'$ as above.
If $\gamma^*$ is a simple arc with endpoints $q', q''$ in $X_*$, 
let $\beta$ be the shortest spherical geodesic from $q'$ to $q''$.  Since $X_* \cap \D =\emptyset$, $X_*$ is convex in $(\hat{\C}, d^S)$. We see that $\beta$ is contained in $X_*$.  The loop
$\gamma^* \beta$ formed by $\gamma^*$ followed by $\beta$ separates $ \{\phi_n(y_n),\phi_n(y_n') \}$ and $\phi_n(B_n)$ in $\hat{\C}$. Therefore, $l_m(\gamma^*\beta) \geq \delta'$ as above. Since $\beta$ is a shortest spherical geodesic, $l_m(\gamma^*) \geq l_m(\beta)$. It follows that  $$l_m(\gamma^*) \geq  \frac{1}{2}(l_m(\gamma^*) +l_m(\beta)) =\frac{1}{2}
 l_m(\gamma^* \beta) \geq \frac{\delta'}{2}.$$

\subsection{Extremal length estimate II:  $\lim EL(\Gamma_n)=0$  }
The methods of proof are similar to those in \S 7.2.1. 
Recall that $q =\lim_n y_n \in Y_*$, a component of $Y$.
Choose a point $q_n$ in $Y^{(n)}$ such that $\lim_{n} q_n=q$. Clearly, $\lim_n |y_n-q_n| =0$. 
Denote 
$D_r=B_r(q, d^{\mathbf S})$,  $E_r={\mathbf  S^2-D_r}$,  
$D_r^n=B_r(q_n, d^{\mathbf S})$,  and $E_r^n={\mathbf  S^2-D_r^n}$ as subsets of $\mathbf S^2$.
Since $\lim_nq_n=q$, $\bar D^n_r$ and $E^n_r$ converge in Hausdorff distance to $\bar D_r$ and $E_r$ respectively.
The loops and arcs in $\Gamma_n$ separate $B_n=\{x\in\Sigma_n:d^P_{\Sigma_n}(x,0)\leq\delta_0\}$ from $\{y_n, y_n'\}$
where $\delta_0\in(0,1)$. 
The hyperbolic disk $B_n$ is contained in the Euclidean ball $\{x \in \mathbf R^3| |x| < \delta_0/2\}$ since $d^P_{\Sigma} \geq d^P \geq 2d^E$. 
Recall that $\pi$ denotes the shortest distance projection from $\partial \H^3$ to $\tilde\Sigma_n.$ 
By taking $r_0<1/260$ small and using Proposition \ref{3.7}, 
we see that $B_n \subset \pi(E^n_{65r_0})$ for all $n$. 


\begin{lemma}\label{9.3} 
For any $r>0$, there exists $r'<r/65$ such that no component of $Y-Y_*$ intersects both $\partial D_r$ and $\partial D_{r'}$.
\end{lemma}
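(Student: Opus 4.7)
The plan is to give a direct argument using the finite total spherical area of $Y$, bypassing any sequential Hausdorff-limit analysis. First I would observe that any component $Z$ of $Y$ meeting both $C_r$ and $C_{r'}$ for some $r' < r/100$ must contain two points at spherical distance at least $r-r' > 99r/100$. Since the components of $Y$ are points or round disks, such a $Z$ cannot be a single point and must be a round disk of spherical diameter at least $99r/100$; hence its spherical radius is at least $99r/200$, and in particular its spherical area is bounded below by a positive constant $\alpha_r$ depending only on $r$.

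Next I would exploit that the components of $Y$ are pairwise disjoint closed subsets of $\mathbb{S}^2$, so their total spherical area is at most $4\pi$. This bounds the number of components of $Y$ with spherical area $\geq \alpha_r$ by $M := \lceil 4\pi/\alpha_r \rceil$; label them $Z^1, \ldots, Z^M$. For each $i$ with $Z^i \neq Y_*$, the closed disk $Z^i$ is disjoint from $Y_*$ (they are distinct components of $Y$), and since $q \in Y_*$ we have $q \notin Z^i$; by compactness $\rho_i := d^{\mathbb{S}}(q, Z^i) > 0$.

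Finally I would set $r' := \tfrac{1}{2}\min\bigl(\{r/100\} \cup \{\rho_i : 1 \leq i \leq M,\ Z^i \neq Y_*\}\bigr)$, which satisfies $r' < r/100$ and $r' < \rho_i$ whenever $Z^i \neq Y_*$, so $Z^i \cap C_{r'} = \emptyset$ in that case. If some component $W$ of $Y-Y_*$ met both $C_r$ and $C_{r'}$, the first step would force $W = Z^i$ for some $i$, and $W \neq Y_*$ would then force $Z^i \neq Y_*$, contradicting $Z^i \cap C_{r'} = \emptyset$. The argument has no real obstacle; I note that Lemma~\ref{6517} handled the analogous statement for the approximating sequence $X^{(n)}$ by a Hausdorff-limit argument, but here the direct area bound suffices because the compact set $Y$ is given in advance.
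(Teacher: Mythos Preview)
Your proof is correct and follows essentially the same approach as the paper: both use an area estimate to show that only finitely many components of $Y-Y_*$ can have diameter large enough to reach from near $q$ out to $C_r$, and then choose $r'$ smaller than the distance from $q$ (or $Y_*$) to each of these finitely many components. Your write-up is more detailed than the paper's terse ``by a simple area estimate'' and your final remark correctly explains why, unlike Lemma~\ref{6517}, no Hausdorff-limit argument is needed here since $Y$ is fixed.
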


\begin{proof} By a simple spherical area estimate,  we see that there exist only finitely many components $Z_1,...,Z_m$ of $Y-Y_*$ such that $Z_i$ intersects both $\partial D_r$ and $\partial D_{r/65}$. Let $r'$ be a positive number with $r'<r/65$ and $r'<\min_{1\leq i\leq m}d^{\mathbf  S}(Y_*,Z_i)$. The result follows.
\end{proof}

By Lemma \ref{9.3} and $q =\lim_n y_n=\lim_n q_n$, we construct a sequence of positive numbers $\{r_i\}$ and a sequence of integers $\{N_i\}$ increasing to infinity such that

(1) $r_{j+1} < r_j/65$ for all $j$, 

(2) each component of $Y-Y_*$ intersects at most one of  $\overline{D_{65r_j}}-D_{r_j}$ for all $j$, 


(3) if $n \geq N_i$, then $\{y_n, y_n'\} \subset \pi(D^n_{r_i})$.

Note that since $Y^{(n)} \subset Y$, we see that each component of $Y^{(n)}-Y_*$ intersects at most one of $\overline{D_{65r_j}}-D_{r_j}$ for all $j$. 

\medskip
We now prove $\lim_n EL(\Gamma_n)=0$ by showing that
$EL(\Gamma_n)\leq3\cdot 10^8/i$ for all $n\geq N_i$. We will use Proposition \ref{6571}
by constructing a family of annuli $S_n$ as follows. 
For simplicity, we will use the following notation. If $Z$ is a closed subset of $\mathbf S^2$, then $Z^{*}$ denotes the union of $Z$ and all components $Y_k$ of $Y-Y_*$ which intersect $Z$, i.e.,
$$ Z^* = Z \cup(\cup_{Y_k \cap Z \neq \emptyset, Y_k \neq Y_*} Y_k).$$
Note that if $Z$ is connected, then so is $Z^{*}$. 
Now fix $i\geq1$ and $n\geq N_i$. Then by Proposition \ref{3.7} and condition (2) above, $\pi(\bar D_{r_i}^{n*}) \cap \pi( E_{65r_0}^{n*}) =\emptyset$. It follows that $\pi(\bar D_{r_i}^{n*})$ and $ \pi( E_{65r_0}^{n*})$ are disjoint connected compact sets in $\tilde{\Sigma}_n$. 
Hence there is a unique component of $\tilde{\Sigma}_n -(\pi(\bar D_{r_i}^{n*}) \cup \pi( E_{65r_0}^{n*})) $ which is an annulus. We denote this annulus by $S_n$. By conditino (3),  the annulus $S_n$ separates $\{y_n, y_n'\}$ and $B_n$ in $\tilde{\Sigma}_n$. 
See Figure \ref{922}.
Let $W'=Y^{(n)}\cap S_n-Y_*$, 
$\Omega_{r_j}=\{z\in\mathbf S^2|r_j<d^{\mathbf S}(z,q_n)<65r_j\}$ for $j=0,1,..., i-1$,
and
$\Gamma_n^*$ be the set of all simple loops in $S_n^{W'}$ separating the two ends of $S_n$. 
Applying Proposition \ref{6571} to $S=S_n$, $W=Y^{(n)}$, and $\Omega_{r_j}$ for $j=0,1,..., i$,
we obtain, 
$$
EL(\Gamma_n^*) \leq \frac{3}{i} 10^8.
$$
\begin{figure}[ht!]
\begin{center}
\begin{tabular}{c}
\includegraphics[width=0.6\textwidth]{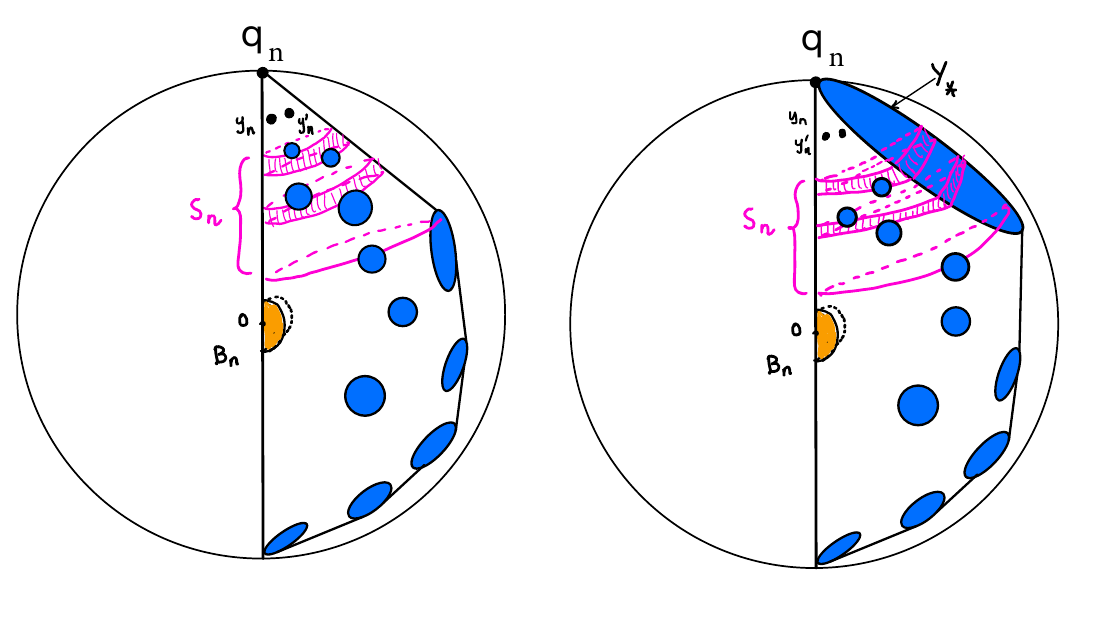}
\end{tabular}
\end{center}
\caption{$S_n$ in the case of $Y_*$ is a point or a disk.} \label{922}
\end{figure}

The rest of the proof is similar to that of \S7.2.1.  If $Y_*$ is a one-point set, then by the construction $\Gamma_n^* \subset \Gamma_n$. Therefore, the monotonicity of the extremal length implies
$$ EL(\Gamma_n) \leq EL(\Gamma_n^*) \leq \frac{3}{i} 10^8.$$

The same proof used in \S7.2.1 shows that $\lim_n EL(\Gamma_n)=0$ if $Y_*$ is a disk. 
\section{Application to discrete conformal geometry of polyhedral surfaces}


A polyhedral surface is a triple $(S, V, d)$ where $S$ is connected surface, $V \subset S$ is a discrete subset, and $d$ is a flat cone metric on $S$ with cone points contained in $V$.    We call $d$ a PL or polyhedral metric on $(S,V)$.  The discrete curvature $K$ of the polyhedral surface is a function defined on the vertices, and $k(v)$ is $2\pi$ less the cone angle at $v$.  Usually, these metrics are obtained by isometric gluing of Euclidean triangles along pairs of edges by isometries. Thus, a polyhedral surface $(S, V, d)$ can be represented by a triangulated PL surface $(S, \T, l)$ where $\T$ is a triangulation with vertex set $V$ and $l: E(\T) \to \R_{>0}$ is the edge length function, i.e., $l(e)$ is the length of the edge $e$. Here $E(\T)$ is the set of all edges in $\T$.     One of the goals of the discrete conformal geometry is to define discrete conformal equivalence among PL metrics on $(S, V)$ and establish the corresponding discrete uniformization theorem. In our recent work \cite{glsw}, we are able to introduce a discrete conformality for polyhedral metrics and establish a discrete uniformization theorem for compact surfaces. We will briefly recall the related results and their relationship to the Weyl problem and discuss one application of the main result in discrete conformal geometry.

A basic tool in computational geometry is the Delaunay triangulation.  In the 2-dimensional case, a triangulated PL surface $(S, \T, l)$ is called \it Delaunay  \rm if the circumdisk of each triangle contains no vertices in its interior.
This is equivalent to the condition that if $e$ is an edge adjacent to two triangles $t$ and $t'$, then the sum of the  two angles in $t$ and $t'$ which are opposite to $e$ is at most
$\pi$. A fundamental theorem in computational geometry says that for any closed PL surface $(S, V, d)$, there is always a Delaunay triangulation $\T$ of $(S, V, d)$ with vertex set equal to $V$. In general,  Delaunay triangulations of $(S,V,d)$ may not be unique. 
Discrete conformal geometry tries to define discrete conformal equivalence between to polyhedral surfaces $(S, V, d)$ and $(S, V, d')$ such that (i) discrete conformal equivalence is computable, (2) discrete conformal maps converge to smooth conformal maps as meshes tend to zero, and (3) there exists a discrete uniformization theorem within each discrete conformal class. 




\begin{figure}[ht!]
\begin{center}
\begin{tabular}{c}
\includegraphics[width=0.7\textwidth]{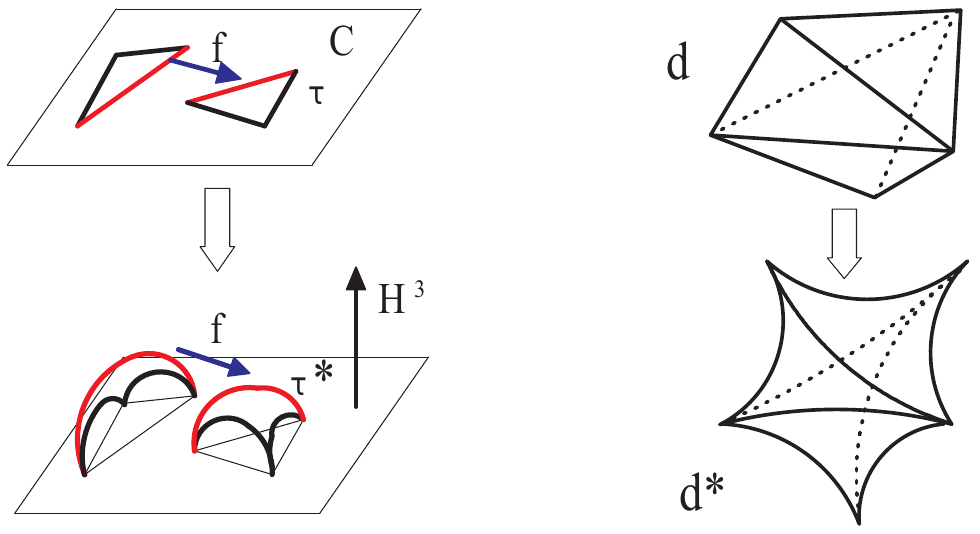}
\end{tabular}
\end{center}
\caption{Discrete conformality in terms of hyperbolic geometry}
\label{figure_1}
\end{figure}





 Given
a PL metric $d$ on $(S,V)$, construct a Delaunay triangulation $\T$ 
of $(S,V,d)$.  For each Euclidean triangle $\tau =\Delta ABC$ in
$\T$ (considered as a triangle in $\C$), replace $\tau$ by the ideal hyperbolic triangle  $\tau^*$ in
the upper-half-space model $\C \times \R_{>0}$ of the hyperbolic 3-space such that $\tau^*$ and $\tau$ have
the same set of vertices $\{A,B,C\}$ in $\C$. 
If $\tau$ and $\sigma$ are two
Euclidean triangles in $\T$ glued along a pair of edges by a
Euclidean isometry $f$, 
then one glues $\tau^*$ and $\sigma^*$ along their corresponding
edges by the \it same \rm isometry $f$, considered as a rigid
motion of $\H^3$.  In this way, one produces a complete finite
area hyperbolic metric $d^*$ on $S-V$. From the definition of
Delaunay triangulation, one sees that $d^*$ is independent of the
choices of the Delaunay triangulation $\T$.

\begin{definition}(Discrete conformalitly of PL metrics) \cite{glsw} Two PL metrics $d_1$ and $d_2$ on a marked surface $(S, V)$ are \it discrete conformal \rm if there exists an isometry $\phi: (S-V, d_1^*) \to (S-V, d_2^*)$ such that $\phi$ is homotopic to the identity map relative to $V$. 
\end{definition}

The main theorem proved in \cite{glsw} is,

\begin{theorem} \label{main1} Suppose $(S, V)$ is a closed connected marked surface
and  $d$ is a PL metric on $(S, V)$.  Then for any $K^*:V \to
(-\infty, 2\pi)$ with $\sum_{v \in V} K^*(v) =2\pi \chi(S)$, there
exists a PL metric $d'$, unique up to scaling and isometry
homotopic to the identity on $(S, V)$, such that $d'$ is discrete
conformal to $d$ and the discrete curvature of $d'$ is $K^*$.

\end{theorem}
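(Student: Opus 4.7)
The plan is to prove Theorem \ref{main1} by a variational argument: identify the space of PL metrics discretely conformal to $d$ with a finite-dimensional affine space carrying a strictly convex, proper energy functional whose gradient is the deviation of the discrete curvature from the target $K^*$. The first step is to set up coordinates on the conformal class. Using the hyperbolic construction $d \mapsto d^*$ described before the theorem, the conformal class of $d$ is pinned to a single complete finite-area hyperbolic surface structure on $S-V$ (equivalently, a point in $\mathcal{T}(S-V)$). Within that class, every PL metric $d'$ is recovered by choosing a horocyclic decoration at each cusp, i.e., a discrete conformal factor $u \in \R^V$; the corresponding edge lengths on a Delaunay triangulation $\T$ are given by the vertex-scaling rule $\ell'_{ij} = e^{(u_i + u_j)/2}\ell_{ij}$. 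Because Delaunay triangulations of $(S,V,d')$ change as $u$ varies, the parameter domain $\R^V$ is cut into open cells indexed by combinatorial type of $\T$, separated by codimension-one diagonal-flip walls.

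Second, I would construct the functional. On each cell, using Penner's $\lambda$-lengths for the ideal-triangle decomposition and Milnor's Lobachevsky function $\Lambda$, one writes, triangle by triangle, an explicit formula
\begin{equation*}
F(u) = \sum_{\tau \in \T(u)} \mathcal{E}_\tau(u) \ - \ \sum_{v_i \in V} K^*(v_i)\, u_i,
\end{equation*}
where $\mathcal{E}_\tau$ is the Bobenko--Pinkall--Springborn triangle energy built from $\Lambda$ and the triangle angles of $(S, \T, \ell')$. The Schl{\"a}fli-type differential identity for decorated ideal tetrahedra yields $\partial F/\partial u_i = K_i(u) - K^*(v_i)$, where $K_i(u)$ is the discrete curvature of the resulting PL metric; so critical points of $F$ are exactly the metrics realizing $K^*$. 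A direct Hessian computation (in terms of half-angles opposite each edge) shows that $F$ is strictly convex on each open combinatorial cell modulo the one-dimensional scaling direction $u \mapsto u + c\mathbf{1}$.

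The third step, and the main technical obstacle, is to glue $F$ across the Delaunay-flip walls and to prove properness. For gluing, one verifies that across a flip between adjacent triangles $\{ijk\}\cup\{ikl\} \leftrightarrow \{jkl\}\cup\{ijl\}$ the two triangle-energy formulas agree to first order in $u$, using the Ptolemy identity for $\lambda$-lengths; hence $F$ extends to a globally $C^1$ and convex function on $\R^V/\R\mathbf{1}$. For properness one must show that if $u$ escapes to infinity along a ray in $\R^V/\R\mathbf{1}$, then either some triangle degenerates or some cone angle tends to $0$ or $2\pi$, so that $\nabla F$ cannot converge to zero on $(-\infty,2\pi)^V$. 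The delicacy is that $K^*(v)$ may take arbitrarily large negative values, so one needs sharp angle estimates as horocycles grow or shrink without bound, and careful use of the Gauss--Bonnet constraint $\sum K^*(v) = 2\pi\chi(S)$ to rule out escape directions.

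Once $F$ is shown to be globally $C^1$, strictly convex on $\R^V/\R\mathbf{1}$, and proper on the affine slice cut out by the target curvature, a unique critical point exists, yielding the desired PL metric $d'$, determined up to the global scaling $u \mapsto u + c\mathbf{1}$. This gives simultaneously existence and the stated uniqueness (up to scaling and isometry homotopic to the identity). The hardest part in practice is the properness analysis, since it requires analyzing how the conformal structure $d^*$ degenerates when $u$ diverges, and is precisely where the interplay with the hyperbolic picture of $\partial C_H(Y)$ developed in the rest of the paper enters the proof.
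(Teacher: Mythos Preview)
The paper does not prove this theorem at all: it is quoted from \cite{glsw} as background in \S10 and is used only to motivate the discrete uniformization conjecture for non-compact surfaces (Conjecture~4). So there is no ``paper's own proof'' to compare against; your proposal is really a sketch of the argument in \cite{glsw}, not of anything in the present paper.

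As a sketch of the \cite{glsw} proof, your outline is broadly accurate: the variational framework with the Bobenko--Pinkall--Springborn triangle energy, the identification of the discrete conformal class with decorated ideal triangulations via $d\mapsto d^*$, the gradient formula $\partial F/\partial u_i = K_i(u)-K^*(v_i)$, the $C^1$ gluing across Delaunay flips via Ptolemy, and strict convexity modulo scaling are all correct ingredients. Properness is indeed the crux.

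There is, however, a genuine misconception in your last paragraph. You write that properness ``is precisely where the interplay with the hyperbolic picture of $\partial C_H(Y)$ developed in the rest of the paper enters the proof.'' This is wrong. The convex-hull and transboundary-extremal-length machinery of this paper plays no role whatsoever in proving Theorem~\ref{main1}; that theorem concerns closed surfaces and is handled entirely by the finite-dimensional variational argument in \cite{glsw}. The logical flow in \S10 runs in the opposite direction: Theorem~\ref{main1} is cited to explain what discrete uniformization means, and then the non-compact analogue (Conjecture~4) is reformulated as the Weyl-type problem that the rest of the paper addresses. If you are looking for where the properness argument actually lives, it is internal to \cite{glsw} (and uses angle-estimate and degeneration analysis for decorated ideal triangulations), not the $\partial C_H(Y)$ geometry here.
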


For the constant function $K^*=2\pi\chi(S)/|V|$ in theorem
\ref{main1}, we obtain a constant curvature PL metric $d'$, unique
up to scaling, discrete conformal to $d$. This is a discrete
version of the uniformization theorem. 

Theorem \ref{main1} takes care of compact polyhedral surfaces.  For non-compact simply connected polyhedral surface $(S, V, d)$, the discrete uniformization problem asks if it is discrete conformal to the following two types of surfaces:  $(\C, V', d_{st})$ or $(\mathbb D, V', d_{st})$. Here $d_{st}$ is the standard Euclidean metric on $\C$ and $V'$ is a discrete set in $\C$ or $\mathbb D$.  It is easy to see that if $V'$ is the set of vertices of a Delaunay triangulation in $\C$ or $\D$, then the associated hyperbolic metric to $(\C, V', d_{st})$ and $(\mathbb D, V', d_{st})$ are exactly the boundary of the convex hulls $\partial C_P(V')$ and $\partial C_P(V' \cup \mathbb D^c)$. It is easy to see that the hyperbolic metric associated with $(S, V, d)$ is a complete hyperbolic metric with cusp ends at points in $V'$.  Thus the discrete uniformization problem for non-compact surfaces proposed in \cite{lsw} is the following,

\medskip
\noindent
{\bf Conjecture 4.}  Suppose $(\Sigma, d)$ is a complete hyperbolic surface with countably many ends, and all but at most one are the cusp ends. Then there exists, unique up to M\"obius transformations, a circle type closed set $X$ such that $(\Sigma, d)$ is isometric to $\partial C_P(X)$. 

\medskip
This conjecture is our original motivation for Theorem \ref{1.222}.  Furthermore, Theorem \ref{1.22} implies the existence part of Conjecture 4 holds.   The uniqueness part remains open.

\section{Appendix: Conformal structure on non-smooth surfaces of bounded curvature}



In this appendix, using the work of Reshetnyak \cite{res} and \cite{troyanov},  we will justify the computations of extremal lengths of curve families in conformal structures on non-smooth  surfaces like $\partial C_H(Y)$.
We begin with a brief  recall of Alexandrov's theory of surfaces of bounded curvature and then discuss the associated conformal structure.


\subsection{Surfaces of bounded curvature}

Suppose $(S,d)$ is a surface with a path metric $d$. This means that the distance $d(x,y)$ between two points is equal to the infimum of lengths of all paths from $x$ to $y$ measured in $d$.  Geodesics in $(S,d)$ are locally distance minimizing curves. Suppose $\beta$ and $\gamma$ are two geodesics from a point $O=\beta(0)=\gamma(0)$ parameterized by arc lengths. Then the (upper) \it angle \rm $\alpha$ between them at $O$ is defined to be
$$ \alpha =\limsup_{s \to 0, t\to 0} \arccos(\frac{ d(0, \beta(s))^2+d(0, \gamma(t))^2 -d(\beta(s), \gamma(t))^2}{2 d(0, \beta(s)) d(0, \gamma(t))}).$$
For a geodesic triangle $T$, the (upper) \it excess \rm of the $T$ is $\delta(T) =\theta_1+\theta_2+\theta_3 -\pi$ where $\theta_i$ are the angles of the triangle
$T$ at three vertices.  The surface $(S,d)$ is called of  \it bounded curvature \rm if for every point $p \in S$, there exists a neighborhood $U$ of $p$ and a constant $M(U) < \infty$ such that for any collection of pairwise disjoint geodesic triangles $T_1, T_2, ..., T_n$ in $U$, we have
$$ \sum_{i=1}^n \delta(T_i) \leq M(U).$$

All smooth Riemannian surfaces,  polyhedral surfaces, convex surfaces in the Euclidean and hyperbolic spaces are surfaces of bounded curvature. For a smooth Riemannian surface $(S,g)$ with Gaussian curvature $K$, the area form $dA_g$, the curvature form $KdA_{g}$ and the total geodesic curvature of a smooth curve are well defined. In a surface of bounded curvature,  Alexandrov redefined these notations using the distance $d$ and they become the area measure, curvature measure $w$ and total geodesic curvature.
We say a sequence of metrics $d_n$ on a space $X$ converges uniformly to a metric $d$ on $X$ if $d_n(x,y) \to d(x,y)$ uniformly on $X \times X$.  One of the key theorem on surfaces of bounded curvature is the following approximation theorem. See Theorem 6.2.1 in \cite{res}

\begin{theorem}[Alexandrov-Zalgaller] Suppose $(S,d)$ is a surface of bounded curvature and $p \in S$. Then there exist a disk neighborhood $U$ of $p$, a constant $C(U)>0$ and a sequence of polyhedral metrics $d_n$ on $U$ such that $d_n$ converges uniformly to $(U, d|_U)$ and
\begin{equation}\label{app5}  |w_{d_n}|(U) + |\kappa_{d_n}|(\partial U) \leq C, \end{equation}
where $w_{d_n}$ is the curvature measure and $\kappa_{d_n}$ is the total geodesic curvature of a path. Conversely, if $(S, d_n)$ is a sequence of polyhedral surfaces converging uniformly to a path metric $(S, d)$ such that (\ref{app5}) holds on $S$ for some constant $C$, then $(S, d)$ is a surface of bounded curvature.
\end{theorem}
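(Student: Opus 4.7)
The plan is to handle the two implications separately, bridged by the Gauss--Bonnet identity for polyhedral surfaces: for a geodesic triangle $T$ whose boundary avoids all cone points, the excess satisfies $\delta(T) = w(\mathrm{int}(T))$, where $w$ is the signed curvature measure of the polyhedral metric concentrated at cone points. The forward direction constructs polyhedral approximations via geodesic triangulations, and the converse direction is a semicontinuity argument for the excess.

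\emph{Forward direction.} I would fix $p \in S$ and, by the definition of bounded curvature, choose a closed disk neighbourhood $U$ of $p$ with $\sum_i \delta(T_i) \leq M(U)$ for every finite family of pairwise disjoint geodesic triangles in $U$. Take a sequence of $\varepsilon_n$-fine nets $V_n \subset U$ with $\varepsilon_n \to 0$ and construct a geodesic triangulation $\mathcal{T}_n$ of $U$ with vertex set $V_n$; such triangulations exist because within $U$ any three sufficiently close points bound a unique thin geodesic triangle. Replacing each geodesic triangle of $\mathcal{T}_n$ by the Euclidean triangle with the same three edge lengths produces a polyhedral metric $d_n$ on $U$. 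Two checks remain: (i)~the uniform convergence $d_n \to d|_U$, which follows because a geodesic triangle of small diameter with small excess is close in shape to its Euclidean model; and (ii)~the bound $|w_{d_n}|(U) + |\kappa_{d_n}|(\partial U) \leq C$, which via the polyhedral Gauss--Bonnet formula reduces to a bound on the total excess of $\mathcal{T}_n$ together with a turning bound along $\partial U$, both inherited from the ambient $(S,d)$.

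\emph{Converse direction.} I would start with polyhedral metrics $d_n$ converging uniformly to a path metric $d$ with $|w_{d_n}|(S) + |\kappa_{d_n}|(\partial U) \leq C$ uniformly in $n$. Given any finite family of disjoint geodesic triangles $T_1, \dots, T_k$ in $(S,d)$ contained in some $U$, I would approximate each $T_i$ by a geodesic triangle $T_i^{(n)}$ in $(S,d_n)$ whose vertex triple converges to that of $T_i$; disjointness is preserved for large $n$. The polyhedral Gauss--Bonnet identity gives $\sum_i \delta(T_i^{(n)}) \leq |w_{d_n}|(U) \leq C$. Passing to the limit via the upper semicontinuity of the excess of geodesic triangles under uniform metric convergence yields $\sum_i \delta(T_i) \leq C$, which is precisely the bounded curvature condition.

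\emph{Main obstacle.} Uniform convergence of path metrics does not by itself imply convergence of geodesics, convergence of angles, or one-sided semicontinuity of the excess. The crucial technical step is to use the uniform bound on total absolute curvature to force equicontinuity of the shortest-path maps and the correct semicontinuity of upper angles; without (11.1) the limit surface can acquire concentrated curvature distributions that destroy the excess bound. Establishing this equicontinuity and the Alexandrov-type lower bound $\alpha \leq \liminf_n \alpha_n$ for upper angles is the substance of Reshetnyak's proof, and it is the reason the theorem cannot be deduced by a naive limiting argument.
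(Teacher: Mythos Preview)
The paper does not prove this theorem. It is quoted in the Appendix as a foundational result from the literature, with the citation ``See Theorem 6.2.1 in \cite{res}'' (Reshetnyak's survey, which in turn attributes it to Alexandrov--Zalgaller). There is therefore no ``paper's own proof'' to compare your proposal against; the theorem is used as a black box to justify that the conformal structures on $\partial C_H(Y)$ induced by $d^P$ and $d^E$ coincide.

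That said, your sketch is broadly in the spirit of the original Alexandrov--Zalgaller argument. The forward direction via fine geodesic triangulations and replacement by Euclidean comparison triangles is the classical construction. One point you gloss over is nontrivial: the existence of geodesic triangulations with arbitrarily small mesh in a general surface of bounded curvature is itself a substantial theorem (geodesics need not be unique, triangles can degenerate, and one must control how the triangulation interacts with the curvature measure). Alexandrov and Zalgaller devote considerable effort to this. Your step (ii), bounding $|w_{d_n}|(U)$ by the total excess of $\mathcal{T}_n$, also requires care: the positive and negative parts of the excess must be controlled separately, which is why the hypothesis involves $M(U)$ as a bound on sums of excesses rather than on a single excess.

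For the converse, you correctly identify the crux: upper-angle semicontinuity under uniform metric convergence with a uniform total-curvature bound. But note that the inequality you need is $\delta(T) \leq \limsup_n \delta(T^{(n)}_i)$, i.e., \emph{upper} semicontinuity of the excess, which in turn requires \emph{upper} semicontinuity of each upper angle; your final line writes the inequality the wrong way around. This is not just a typo---the proof of upper semicontinuity of upper angles under these hypotheses is exactly where the uniform curvature bound (\ref{app5}) is consumed, and getting the direction right is the whole point.
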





\subsection{Conformal structures on surfaces of bounded curvature}

The construction of conformal charts for surfaces of bounded curvature by Reshetnyak goes as follows. Suppose $U$ is an open disk in the plane and $w$ is a signed  Borel measure on $U$. 
Then the function $\ln \lambda(z) =\frac{1}{\pi} \int_U \frac{1}{|z-\zeta|} w(d\zeta) +h(z)$ is the difference of two subharmonic functions on $U$ where $h$ is a harmonic function.  Since the Hausdorff dimension of the set of points where $\ln \lambda(z)=-\infty$ is zero,  for an arbitrary Euclidean rectifiable path $L$ in $U$, the integral $\int_L \sqrt{ \lambda(z(s))} ds$ is well defined (could be $\infty$). 
  One defines the distance $d_U$ on $U$ between two points to be the infimum of the  lengths of paths between them in $\lambda(z) |dz|^2$.  There may be some points whose $d_U$-distance  to any other point is infinite. These are called points at infinity and they form a discrete subset in $U$. Let $\tilde U$ be the complement of the set of points at infinity.
 It is proved by Reshetnyak that  $(\tilde U, d_U)$ is a surface of bounded curvature whose curvature measure is $w$.  The main theorem of Reshetnyak's conformal geometry of surfaces of bounded curvature is Theorem 7.1.2 in \cite{res}.

\begin{theorem}[Reshetynyak] \label{res2} Let $(S, d)$ be a surface of bounded curvature. Then for any point $p \in S$, there exists a neighborhood $U'$ of $p$ and an open disk $U$ in the plane together with a Borel measure $w$ such that $(U', d|_{U'})$ is isometric to $(\tilde U, d_U)$.
\end{theorem}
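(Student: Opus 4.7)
The plan is to approximate $(S,d)$ near $p$ by polyhedral metrics, use explicit isothermal coordinates on each polyhedron, and pass to a limit. First I would shrink to a disk neighborhood $U'$ of $p$ on which $|w|(U') + |\kappa|(\partial U')$ is small (in particular less than $2\pi$, to prevent the conformal image from collapsing). By the Alexandrov--Zalgaller theorem stated just above, choose polyhedral metrics $d_n$ on $\overline{U'}$ with $d_n \to d|_{\overline{U'}}$ uniformly and with a uniform bound $|w_{d_n}|(U') + |\kappa_{d_n}|(\partial U') \leq C$.

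For each polyhedral $d_n$, classical uniformization yields a conformal homeomorphism $\psi_n : (\overline{U'}, d_n) \to (\overline{D}, \lambda_n|dz|^2)$ onto the closed unit disk, normalized by fixing three boundary points. A cone point of angle $\theta_i^{(n)}$ contributes a local factor $|z-z_i^{(n)}|^{(\theta_i^{(n)}-2\pi)/\pi}$ to $\lambda_n$; combining these local models gives
$$\log \lambda_n(z) = \frac{1}{\pi} \int \log \frac{1}{|z-\zeta|} \, w_n(d\zeta) + h_n(z),$$
with $w_n = \sum_i (2\pi - \theta_i^{(n)})\delta_{z_i^{(n)}}$ the discrete curvature measure and $h_n$ harmonic. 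The uniform bound on $|w_n|$ gives, by Banach--Alaoglu, a subsequence with $w_n \to w$ weakly on $\overline{D}$; the bound on boundary geodesic curvatures controls $h_n|_{\partial D}$, so a subsequence of $h_n$ converges uniformly on compacta of $D$ to a harmonic function $h$. Together these yield $\log \lambda_n \to \log \lambda$ in $L^1_{\mathrm{loc}}(\tilde U)$, where $\lambda$ has the prescribed logarithmic potential form and $\tilde U$ is $D$ minus the isolated points where $w$ has an atom of mass $\geq 2\pi$. Passing simultaneously to a uniform subsequential limit $\psi$ of the normal family $\{\psi_n\}$, one checks via Alexandrov's convergence theorem that $\psi$ is an isometry $(U', d|_{U'}) \to (\tilde U, d_U)$.

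The main obstacle is this last convergence step: weak convergence of $w_n$ is automatic, but one must upgrade it to a mode of convergence of $\log \lambda_n$ strong enough that lengths of rectifiable paths pass to the limit. The critical input is uniform integrability of $\log|z-\zeta|$ against $\{w_n\}$ on compacta of $\tilde U$, which fails precisely at points where $w$ has an atom of mass $\geq 2\pi$; those points must be excised, consistent with the statement. Once $L^1_{\mathrm{loc}}$ convergence is in hand, equicontinuity of $\psi_n$ (via Koebe distortion applied to conformal maps into the unit disk) plus the fact that each $\psi_n$ is a $d_n$-isometry onto $(\overline{D}, \lambda_n|dz|^2)$ forces the limit $\psi$ to be the asserted isometry, completing the construction of the conformal chart.
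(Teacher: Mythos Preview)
The paper does not prove this theorem; it is quoted from the literature as Theorem 7.1.2 of Reshetnyak's survey \cite{res} and used as a black box to justify that the induced path metrics $d^P_{\partial C_H(Y)}$ and $d^E_{\partial C_H(Y)}$ define the same conformal structure. So there is no ``paper's own proof'' to compare against.

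That said, your outline is broadly faithful to Reshetnyak's actual argument: polyhedral approximation via Alexandrov--Zalgaller, explicit isothermal coordinates on cone surfaces giving $\log\lambda_n$ as a logarithmic potential of the discrete curvature measure plus a harmonic correction, weak limits of the curvature measures, and a compactness argument for the conformal charts. You have also correctly located the crux: upgrading weak convergence of $w_n$ to convergence of lengths in $\lambda_n|dz|^2$. Two points where your sketch is thinner than what Reshetnyak actually needs: first, the normalization ``fix three boundary points'' together with Koebe distortion is not by itself enough to rule out degeneration of $\psi_n$ --- one needs an a priori modulus-of-continuity estimate for both $\psi_n$ and $\psi_n^{-1}$, which Reshetnyak extracts from the uniform curvature bound via comparison with model cones; second, $L^1_{\mathrm{loc}}$ convergence of $\log\lambda_n$ does not immediately give convergence of line integrals $\int_\gamma \sqrt{\lambda_n}\,ds$, so passing lengths to the limit requires an additional argument (Reshetnyak handles this through convergence of the associated distance functions rather than through the densities directly). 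If you intend this as a self-contained proof rather than a pointer to \cite{res}, those two gaps would need to be closed.
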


Let $\phi:  (U', d|_{U'}) \to  (\tilde U, d_U)$ be  an orientation preserving isometry produced in the above theorem. Then $\{(U', \phi)\}$ forms the analytic charts on the surface $(S, d)$.

In conclusion, the metrics in surfaces with bounded curvature can be treated as Riemannian distance derived from Riemannian metrics by relaxing the smoothness condition.

Therefore, in a surface of bounded curvature $(S,d)$ whose area measure is $m$ and  the underlying conformal structure is $\mathcal C$,  we can use the path metric $d$ and area measure $m$ to compute the extremal length of a curve family $\Gamma$. In particular, we have
the estimate
$$ EL(\Gamma, S, \mathcal C) \geq \frac{ l_{d}^2(\Gamma)}{ m(S)}.$$
This estimate has been used extensively in  previous sections on $(\partial C_H(Y), d^P_{\partial C_H(Y)})$.  

Finally, for a compact set $Y \subset \mathbf S^2$ and $\Sigma =\partial C_H(Y)$,  we claim that the two induced path metrics $d^P_{\Sigma}$ and $d^E_{\Sigma}$ on
$\Sigma$ produce the same complex structure.  In particular, this shows for any curve family $\Gamma$ in $\Sigma$, $EL(\Gamma, \Sigma, d^P_{\Sigma}) =EL(\Gamma, \Sigma, d^E_{\Sigma})$.
To see the claim, following Alexandrov \cite{al}, one constructs a sequence of convex hyperbolic polyhedral surfaces converging uniformly on compact sets to $(\partial C_H(Y), d^P_{\partial C_H(Y)})$. The convexity implies that (\ref{app5}) holds. Now on polyhedral surfaces, the conformal structures induced from $d^P$ and $d^E$ are the same since these two metrics are conformal in $\H^3$.  The work of Reshetnyak (\cite{res}, Theorems 7.3.1,  p112)  shows that if a sequence of bounded curvature surfaces converge uniformly to a bounded curvature surface such that (\ref{app5}) holds, then the  isothermal coordinates (with appropriate normalization) converge to the isothermal coordinate of  the limit surface.  Therefore the conformal structures on $\partial C_H(Y)$ are the same.

\end{document}